\sloppy\pagestyle{plain}
\newtheorem{theorem}[equation]{Theorem}
\newtheorem{proposition}[equation]{Proposition}
\newtheorem{lemma}[equation]{Lemma}
\newtheorem{corollary}[equation]{Corollary}
\newtheorem{maintheorem}{Main Theorem}
\theoremstyle{definition}
\newtheorem{example}[equation]{Example}
\newtheorem{definition}[equation]{Definition}
\theoremstyle{remark}
\newtheorem{remark}[equation]{Remark}
\makeatletter\@addtoreset{equation}{section} \makeatother
\newtheoremstyle{dotless}{}{}{\rm}{}{\sc}{}{ }{}
\theoremstyle{dotless}
\newcommand{\sq}{\mathfrak S_4}
\newcommand{\pgl}{\mathrm{PGL}}
\newcommand{\gl}{\mathrm{GL}}
\newcommand{\specl}{\mathrm{SL}}
\newcommand{\pp}{\mathbb P^2}
\newcommand{\dc}{\mathbb C}
\newcommand{\dz}{\mathbb Z}
\newcommand{\aut}{\mathrm{Aut}}
\newcommand{\pl}{\mathbb P^1}
\newcommand{\dt}{\mathbb{T}}
\newcommand{\pic}{\mathrm{Pic}}
\newcommand{\ord}{\mathrm{o}}
\newcommand{\ac}{\mathfrak A_5}
\newcommand{\aq}{\mathfrak A_4}
\newcommand{\dps}{(x:y:z)\times(u:v:w)}
\newcommand{\st}{\mathfrak S_3}
\newcommand{\scinq}{\mathfrak S_5}
\newcommand{\id}{\mathrm{id}}
\newcommand{\dih}{\mathrm{D}}
\author{Antoine Pinardin}
\title{$G$-Solid rational surfaces}
\address{\emph{Antoine Pinardin}
\newline
\textnormal{School of Mathematics, The University of Edinburgh, Edinburgh EH9 3JZ, UK.}
\newline
\textnormal{\texttt{antoine.pinardin@ed.ac.uk}}}
\begin{document}
\maketitle
\begin{abstract}
    We classify $G$-solid rational surfaces over the field of complex numbers for finite group actions.
\end{abstract}

\section{Introduction}

We are interested in the equivariant birational geometry of rational surfaces over the field of complex numbers for finite group actions. Let $S$ be a rational surface, and $G$ be a finite group acting faithfully and biregularly on $S$. Denote by $\rho^G(S)$ the rank of the $G$-invariant part of the Picard group of $S$. The $G$-equivariant Minimal Model Program applied to a resolution of singularities of $S$ implies that $S$ is $G$-birational to a \textit{$G$-Mori fibre space}, i.e. a $G$-surface in one of the following two cases:
\begin{itemize}
	\item A $G$-del Pezzo surface, namely a del Pezzo surface $S$ such that $-K_S$ is ample and $\rho^G(S)=1$,
	\item A $G$-conic bundle, i.e. there is a $G$-equivariant morphism $S\rightarrow\pl$ with general fibre isomorphic to $\pl$, and such that $\rho^G(S)=2$.
\end{itemize}
We say that $S$ is \textit{$G$-solid} if it is not $G$-birational to any $G$-conic bundle.

\begin{maintheorem}
    Let $S$ be a $G$-del Pezzo surface of degree $d=K_S^2$. Then $S$ is $G$-solid if and only if:
    \begin{itemize}
        \item $d\le 3$.
        \item $d=4$ and $G$ does not fix a point on $S$ in general position.
        \item $d=5$ and $G$ is not isomorphic to $\dz_5$ or $\dih_5$.
        \item $d=6$ and $G$ is not isomorphic to $\dz_6$, $\st$, or $\dih_6$.
        \item $d=8$, $S\cong\pl\times\pl$, and, up to conjugation in $\aut(S)$, either
        \begin{itemize}
            \item $G$ has a subgroup isomorphic to $\aq$,
            \item or $G_4\subset G$ and $G\not\subset G_{16}$, for two specific groups $G_4$ and $G_{16}$.
        \end{itemize}
        \item $S\cong\pp$, the group $G$ does not fix a point on $S$, and is not isomorphic to $\sq$ or $\aq$.
    \end{itemize}
\end{maintheorem}
A complete answer was already given for $K_S\le4$ by Segre in \cite{segre1943note}, Manin in \cite{manin1966rational}, Das Dores-Mauri in \cite{das2019g}, and Dolgachev-Iskovskikh in \cite{dois}. We will quickly recall these results in section \ref{formalism}. For $S=\pp$, we also prove the following result.

\begin{theorem}
    Let $G$ be a finite subgroup of $\aut(\pp)$ isomorphic to $\aq$ or $\sq$. The only $G$-Mori fibre spaces $G$-birational to $\pp$ are $\pp$ and a del Pezzo surface of degree $5$ with a $G$-conic bundle structure.
\end{theorem}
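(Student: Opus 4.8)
The plan is to run the $G$-equivariant Sarkisov program out of $\pp$. By the $G$-equivariant factorisation theorem, every $G$-Mori fibre space $G$-birational to $\pp$ is joined to it by a chain of elementary $G$-links, so it suffices to enumerate the links issuing from $\pp$, follow them, and check that the resulting graph of Mori fibre spaces closes up on $\pp$ and a single del Pezzo surface of degree $5$. I would first fix the embedding $G\hookrightarrow\pgl$ given by the (projectively unique) three-dimensional irreducible representation, note that $\aq\subset\sq$, and record that $G$ has no fixed point and no invariant line on $\pp$. The combinatorial input is a classification of the short $G$-orbits together with their collinearity data: the length-$4$ orbit of points in general position, the length-$3$ orbit forming a triangle, the length-$6$ orbit whose points carry four collinear triples (a complete-quadrilateral configuration), and the unique $G$-invariant conic. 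Since a link out of $\pp$ is an extremal blow-up of a $G$-orbit followed by a two-ray game, and the blown-up surface must carry a nef anticanonical class, orbits of length $\ge 12$ are excluded, leaving only these three centres to analyse.

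Next I would analyse each centre through the two-ray game on the blow-up $Z$, which has $\rho^G(Z)=2$. Blowing up the four general points yields $\pz_5$; using $\aut(\pz_5)\cong\scinq$ and restricting the $\scinq$-action on $\pic(\pz_5)$ to $G$ (recall that $\sq$ and $\aq$ arise as stabilisers of a letter in the actions of $\scinq$ and $\ac$ on five letters), I compute $\rho^G(\pz_5)=2$ and single out the $G$-invariant pencil of conics through the four points as the unique $G$-conic bundle structure, since among the five conic bundle structures permuted by $\scinq$ the group $G$ fixes precisely one. This gives the Type I link $\pp\dashrightarrow\pz_5$ onto the surface of the theorem. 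Blowing up the triangle yields a sextic del Pezzo on which $G$ acts through $\st$, and the two extremal rays contract the two $G$-invariant triangles of disjoint $(-1)$-curves, each to a copy of $\pp$; since neither ray is a fibration and $G$ preserves no conic bundle structure, this is a Type II self-link (the standard quadratic transformation) producing no new Mori fibre space. Blowing up the length-$6$ orbit introduces $(-2)$-curves from the four collinear triples, so $Z$ is only a weak del Pezzo; here I would show that the associated link is degenerate and factors through the previous ones, again yielding nothing new.

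Finally, I would close the graph by examining the links out of $\pz_5$ itself. Its two $G$-extremal rays are the conic fibration over $\pl$ and the contraction of the four exceptional $(-1)$-curves back to $\pp$; the only elementary transformations of the $G$-conic bundle over $\pl$ preserve the isomorphism class of $\pz_5$ or return to $\pp$. Combining these facts, the only $G$-Mori fibre spaces $G$-birational to $\pp$ are $\pp$ and the del Pezzo surface of degree $5$ with its $G$-conic bundle structure.

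The main obstacle is the completeness and bookkeeping of the two-ray-game analysis rather than any single computation: one must verify that the three listed orbits are genuinely the only admissible link centres (including infinitely near centres, ruled out by the same nefness bound), and, most delicately, that the triangle and six-point configurations yield only self-links back to $\pp$ rather than a genuinely new del Pezzo or conic bundle. Confirming $G$-minimality and $\rho^G=2$ at every stage, identifying the extremal contractions in the sextic case as divisorial rather than fibrations, and showing that $\pz_5$ admits no further link to a non-isomorphic Mori fibre space, is where the real work lies.
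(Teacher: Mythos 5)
Your overall strategy---fix the embedding via uniqueness of the three-dimensional projective representation, enumerate the Sarkisov links out of $\pp$ by classifying short orbits, then close the graph at the degree-$5$ surface---is exactly the paper's (Lemma \ref{conjaqsq} and Propositions \ref{sqmf}, \ref{aqmf}), but two of your orbit analyses contain genuine errors. The most serious one is at the length-$6$ centres. For $G\cong\aq$ the length-$6$ orbits form a one-parameter family $O_6^a=\{(0:\pm1:a),(a:0:\pm1),(\pm1:a:0)\}$, and for general $a$ these six points \emph{are} in general position: for instance the triple $(0:1:a),(a:0:1),(1:a:0)$ is collinear only when $1+a^3=0$. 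So the blow-up is a smooth cubic surface, there are no $(-2)$-curves, the configuration is not a complete quadrilateral, and the associated link is not degenerate: it is a genuine elementary link of type \MakeUppercase{\romannumeral 2} back to $\pp$, given by the explicit quintic involutions $i_a$ of Proposition \ref{aqmf}, which the paper lists alongside $G$ and the Cremona involution $\tau$ in a generating set of $\bir^G(\pp)$. Your conclusion that these centres yield no new Mori fibre space is true, but only because the links are self-links; the mechanism you propose (weak del Pezzo, link factors through the earlier ones) would never establish this and is false as stated. Separately, your numerical bound is wrong: the classification in \cite{iskovskikh1996factorization} gives that a link of type \MakeUppercase{\romannumeral 1} or \MakeUppercase{\romannumeral 2} out of $S$ is centred at an orbit of length strictly less than $K_S^2=9$, not $12$ (nefness of $-K$ on the blow-up does not even exclude nine general points), and with the correct bound you must still dispose of the $\sq$-orbit of length $8$, which you never mention; the paper excludes it because all eight points lie on the conic $x^2=yz$, just as the length-$6$ orbit of $\sq$ is excluded because it lies on the invariant conic $x^2+y^2+z^2=0$ --- a conic you list among your combinatorial input but never actually use.

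The terminal step at degree $5$ is also only asserted. You say the elementary transformations of the conic bundle $Z_5\to\pl$ ``preserve the isomorphism class of $Z_5$ or return to $\pp$'' and flag the verification as the real work, but this is precisely what must be proved, and the paper proves something sharper: there is \emph{no} link out of $Z_5$ other than the inverse of the type \MakeUppercase{\romannumeral 1} link. The argument is concrete: the kernel $N\cong\dz_2^2$ of the induced homomorphism $G\to\aut(\pl)$ acts faithfully on every smooth conic of the defining pencil, hence fixes no point of any general fibre, so no admissible centre for a link or elementary transformation exists. Note also that for $G\cong\aq$ there are three distinct orbits of length $4$ (namely $O_4$, $O_4'$, $O_4''$ in Proposition \ref{aqmf}), hence three type \MakeUppercase{\romannumeral 1} links and three conic bundles on which this fibre-wise argument must be run, not one as in your write-up. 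In summary: right skeleton, but a false mechanism at the $6$-point centres, a missing length-$8$ case, an incorrect exclusion bound, and an unproven closing step.
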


In particular, for $G\cong\aq$ and for $G\cong\sq$, the projective plane is not $G$-birational to any Hirzebruch surface $\mathbb F_n$.

A recent motivation for our work is given by Tschinkel, Yang, and Zhang in \cite{tschinkel2023equivariant}. Both $G$-solid surfaces and surfaces which are not $G$-birational to a Hirzebruch surface are classes of divisors on threefolds which give rise to \textit{Incompressible Divisorial Symbols}, a modern tool used in the formalism of Burnside groups to distinguish birational types of group actions. An example of applications of these techniques by Cheltsov, Tshinkel, and Zhang can be found in \cite{cheltsov2023equivariant}.

Finally, let us point out that the classification of $G$-solid Fano varieties is widely open starting from dimension $3$. Some work has been achieved in the case of toric Fano threefolds, see for example \cite{cheltsov2023toric}, \cite{cheltsov2023birational}, and \cite{cheltsov2023equivariant}.

\section{Formalism and existing results}\label{formalism}
\subsection{The Sarkisov program}
We recall a powerful tool for studying the $G$-equivariant birational geometry of rational surfaces. Any $G$-birational map between $G$-Mori fibre spaces splits into a sequence of elementary $G$-birational maps, called \textit{$G$-Sarkisov links}.

\begin{definition}
    A $G$-\textit{Sarkisov link}, from $S$ to $S'$, is a commutative diagram of one of the following forms.
\end{definition}
\begin{itemize}
    \item Links of type \MakeUppercase{\romannumeral 1}:
    $$\xymatrix
            {
                 &S'\ar[ld]_\sigma\ar[d]\\
                 S&\pl
            }$$
    where $S$ is a $G$-del Pezzo surface, $S'\rightarrow\pl$ is a $G$-conic bundle, and $\sigma$ is the blow-up of a $G$-orbit.
    \item Links of type \MakeUppercase{\romannumeral 2}:
    $$\xymatrix
            {
                 &Z\ar[ld]_{\sigma}\ar[rd]^{\sigma'}&\\
                 S\ar[d]_{\pi}\ar@{-->}[rr]&&S'\ar[d]^{\pi'}\\
                 B\ar@{=}[rr]&&B'
            }$$
    where either $B=\{\mathrm{pt}\}$, in which case $S$ and $S'$ are $G$-del Pezzo surfaces, or $B\cong\pl$, and $\pi,\pi'$ are $G$-conic bundles. The maps $\sigma$ and $\sigma'$ are blow-ups of $G$-orbits.
    \item Links of type \MakeUppercase{\romannumeral 3}. They are the inverses of the links of type \MakeUppercase{\romannumeral 1}.
    \item Links of type \MakeUppercase{\romannumeral 4}. Here, $S=S'$ is a $G$-conic bundle, and any such link is the choice of one conic bundle structure on $S$ among the possible two.
\end{itemize}

In the following theorem, we gather the two main results about the Sarkisov program for surfaces, both taken from \cite{iskovskikh1996factorization}.

\begin{theorem}
        Any $G$-birational map $S\dashrightarrow S'$ between $G$-Mori fibre spaces splits into finitely many $G$-links. Moreover, the $G$-links between $G$-Mori fibre spaces are classified fully classified in \cite{iskovskikh1996factorization}.
\end{theorem}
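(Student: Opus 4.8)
The plan is to run the $G$-equivariant Sarkisov program, following the two-dimensional algorithm of Iskovskikh. Given a $G$-birational map $\phi\colon S\dashrightarrow S'$ between $G$-Mori fibre spaces, I would first choose a $G$-invariant mobile linear system $\mathcal H$ on $S$, namely the strict transform of a sufficiently positive $G$-invariant system on $S'$ (for instance a suitable multiple of the relative anticanonical system). To the pair $(S,\mathcal H)$ I would attach the Sarkisov invariant: the threshold $\mu$ for which $\mathcal H+\mu K_S$ becomes trivial on the fibres of the structure morphism $S\to B$, the canonical threshold $\lambda$ of the pair $(S,\frac1\mu\mathcal H)$, and a discrete invariant $e$ recording the $G$-invariant Picard rank. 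Ordered lexicographically, this triple is the quantity the program is designed to decrease.

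The core of the argument is the Noether--Fano criterion: if $\phi$ is not already a $G$-isomorphism of fibre spaces, then either the pair $(S,\frac1\mu\mathcal H)$ fails to be canonical, or $\mu$ exceeds the value dictated by $-K_S$, and in either case there is a canonical $G$-invariant centre forcing a link. I would then produce the link by a $G$-equivariant two-ray game: blow up the offending $G$-orbit to obtain a surface $Z$ with $\rho^G(Z)=\rho^G(S)+1$, and run the two extremal contractions available on $Z$. Because $Z$ has $G$-invariant Picard rank two, there are exactly two $K_Z$-negative $G$-extremal rays, and the combinatorics of which ray contracts a divisor versus a fibration yields precisely the four link types \MakeUppercase{\romannumeral 1}--\MakeUppercase{\romannumeral 4} catalogued above. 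After performing the link one checks that the invariant strictly drops, so the procedure terminates in finitely many steps, which gives the factorization statement.

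The equivariant refinements I would need to supply are that the $G$-MMP is available on rational surfaces (running on the $G$-invariant sublattice of $\pic$), that every contracted locus is a $G$-orbit of $(-1)$-curves or a $G$-invariant fibre, and that the relevant linear systems and thresholds can be taken $G$-invariant, which holds automatically by averaging since $G$ is finite. Finiteness of the classification then reduces to the bounded list of del Pezzo degrees $1\le d\le 9$ together with the conic bundles, and, for each, to enumerating the $G$-orbits whose blow-up or blow-down keeps the surface a $G$-Mori fibre space --- exactly the tables compiled in \cite{iskovskikh1996factorization}.

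The hard part will be the termination and the exhaustiveness of the link list rather than the individual links: one must verify that the lexicographic invariant genuinely strictly decreases under each $G$-link, which requires the Noether--Fano inequality in sharp form, and that no configuration of $G$-orbit produces a link outside types \MakeUppercase{\romannumeral 1}--\MakeUppercase{\romannumeral 4}. Since both of these are precisely what Iskovskikh establishes, I would in practice import his classification wholesale and only confirm that his cone-of-curves and two-ray-game arguments descend to the $G$-invariant setting, which they do because taking $G$-invariants is exact for a finite group.
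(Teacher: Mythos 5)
The paper does not actually prove this statement: it is imported verbatim from Iskovskikh's paper, with the text explicitly saying that both parts are ``taken from \cite{iskovskikh1996factorization}.'' So the honest comparison is between your sketch and the cited source, and in that light your proposal is in essence the same move the paper makes --- you outline the standard Sarkisov machinery (Noether--Fano, untwisting via a two-ray game, termination by a lexicographically decreasing degree) and then say you would ``import his classification wholesale,'' which is exactly what the paper does, only more briefly. As a roadmap of Iskovskikh's argument your outline is broadly accurate, including the correct identification of the genuinely hard points (sharp Noether--Fano for termination, and exhaustiveness of the link list), and the remark that equivariance costs nothing because one can average and work with the $G$-invariant part of the Picard lattice is the right justification.

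Two technical slips in the sketch are worth flagging, since as written they misdescribe how the four link types arise. First, your claim that the blow-up $Z$ of the maximal $G$-orbit has $\rho^G(Z)=2$ is only correct when $S$ is a $G$-del Pezzo surface with $\rho^G(S)=1$; when $S\to B$ is a $G$-conic bundle one has $\rho^G(S)=2$, so $\rho^G(Z)=3$ and the two-ray game must be played relative to the base $B$ (this is where links of type II over $B\cong\mathbb P^1$ come from). Second, not all four types are produced by blowing up an orbit: links of type III are by definition inverses of type I (they start with a divisorial contraction, not a blow-up), and links of type IV involve no blow-up at all --- they are a change of the fibration structure on a fixed surface, e.g.\ swapping the two rulings of $\mathbb P^1\times\mathbb P^1$, and they enter the factorization precisely because the two-ray game can terminate in a second Mori fibre structure rather than a divisorial contraction. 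Neither slip is fatal, since you defer these points to Iskovskikh anyway, but a self-contained proof would need the relative two-ray game and the type III/IV bookkeeping stated correctly.
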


We will not recall this classification here, but will carefully mention what we use from it throughout the present note. Let us mention that a $G$-link of type \MakeUppercase{\romannumeral 1} or \MakeUppercase{\romannumeral 2} starting from a surface $S$ is always centered at an orbit of length lower than $d=K_S^2$.

\subsection{Existing results}

The case of $G$-del Pezzo surfaces $S$ of canonical degree $d\le3$ is often referred to as the Segre-Manin Theorem. Recall that a $G$-del Pezzo surface $S$ is called $G$-rigid if any $G$-birational map starting from $S$ leads to a $G$-isomorphic surface. The surface $S$ is called $G$-superrigid if any $G$-birational map starting from $S$ is a $G$-isomorphism.
\begin{proposition}
    Let $S$ be a smooth del Pezzo surface, and $G$ be a finite subgroup of $\aut(S)$ such that $\rho^G(S)=1$.
    \begin{itemize}
        \item If $K_S^2=1$, $S$ is $G$-superrigid.
        \item IF $K_S^2=2$ or $K_S^2=3$, $S$ is $G$-rigid.
    \end{itemize}
\end{proposition}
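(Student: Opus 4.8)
The plan is to prove this via the Sarkisov program, using the fact established earlier that any $G$-link of type I or II starting from $S$ must be centered at a $G$-orbit of length strictly less than $d = K_S^2$. The strategy is to show that for $d \le 3$, no such link can exist because $S$ admits no $G$-orbits of sufficiently small length to serve as valid blow-up centers compatible with the Mori fibre space structure.

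First I would recall the structure of the relevant Sarkisov links. Since $\rho^G(S) = 1$, any $G$-birational map $S \dashrightarrow S'$ between $G$-Mori fibre spaces factors (by the Sarkisov theorem above) into a finite sequence of $G$-links, and the first link must be of type I or II starting from $S$. For a del Pezzo surface of degree $d$, such a link is centered at a $G$-orbit of length $\ell < d$. The plan is to analyze which orbits of length $\ell < d$ can actually be blown up while preserving the $G$-Mori fibre space property, i.e. which blow-ups $\sigma \colon Z \to S$ keep $-K_Z$ nef/ample enough and $Z$ a valid total space of a link. The key numerical constraint is that blowing up $\ell$ points drops the degree to $d - \ell$, and for the resulting surface to admit a valid Sarkisov link one needs the blown-up points in sufficiently general position (no two infinitely near, not on a $(-1)$-curve or conic in the forbidden configurations).

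\medskip

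\noindent\textbf{Degree 1.} Here $d = 1$, so any link would be centered at an orbit of length $\ell < 1$, which is impossible. Hence there are no links of type I or II emanating from $S$, and the only possible links are of type III (requiring $S$ to be the total space of a type I link, impossible since $\rho^G(S)=1$ with no conic bundle below) or type IV (requiring a conic bundle). I would conclude that every $G$-birational self-map is a $G$-isomorphism, giving $G$-superrigidity. The one subtlety is the base point of $|-K_S|$: a degree-$1$ del Pezzo surface has a unique anticanonical base point, which is automatically $G$-fixed, but blowing it up does not initiate a valid Sarkisov link, so it poses no threat.

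\medskip

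\noindent\textbf{Degrees 2 and 3.} For $d = 2$ or $d = 3$ I would show $S$ is $G$-rigid but need not be superrigid. A link of type II centered at a $G$-fixed point (orbit of length $1 < d$) may exist: for the cubic surface ($d=3$) this is the classical Geiser/Bertini-type or the link given by projection from a fixed point, which produces a Sarkisov self-link and hence a nontrivial $G$-birational self-map landing on a $G$-isomorphic del Pezzo surface of the same degree. I would verify that every such link returns to a surface $G$-isomorphic to $S$ (rigidity) by tracking the degree: an admissible type II link from degree $d$ blows up $\ell$ points and blows down to recover degree $d$, so the target is again a del Pezzo surface of degree $d$ with $\rho^G = 1$, and by the classification of such surfaces it must be $G$-isomorphic to $S$. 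The main obstacle, and where the real work lies, is ruling out links that would lead to a genuinely different Mori fibre space (in particular a conic bundle): one must check that for $d \le 3$ there is no $G$-orbit in the required special position (e.g. lying on a line or conic, or containing infinitely near points forced by the group action) that would allow a type I link to a conic bundle or a type II link to a lower-degree del Pezzo surface. I expect the heart of the argument to be this case-by-case verification, invoking the full Iskovskikh classification of links to exclude every link except those of type II returning to the same degree.
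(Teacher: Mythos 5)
Your overall strategy is sound and, for what it is worth, it is not the paper's: the paper does not prove this proposition at all, but quotes it as the classical Segre--Manin theorem with references to Segre, Manin, Dolgachev--Iskovskikh, and Das Dores--Mauri. The closest in-paper model is the degree-$4$ proposition immediately following, which is proved exactly along your lines: read off the possible links from Iskovskikh's classification in \cite{iskovskikh1996factorization}, and invoke the fact, cited from \cite{egor}, that Geiser and Bertini involutions lead to $G$-isomorphic surfaces. Your degree-$1$ case is correct and complete as written: any link of type \MakeUppercase{\romannumeral 1} or \MakeUppercase{\romannumeral 2} from $S$ would be centered at an orbit of length $<K_S^2=1$, which is absurd, links of types \MakeUppercase{\romannumeral 3} and \MakeUppercase{\romannumeral 4} require a conic bundle structure, and hence every $G$-birational map from $S$ to a $G$-Mori fibre space is a $G$-isomorphism.

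In degrees $2$ and $3$, however, there are two genuine gaps. First, your ``degree tracking'' step --- that an admissible type \MakeUppercase{\romannumeral 2} link from degree $d$ ``blows down to recover degree $d$'' --- does not follow from anything you establish, and is false for type \MakeUppercase{\romannumeral 2} links in general: such links can change the degree, as the paper's own Remark \ref{glinksp1p1} shows with links from $\pl\times\pl$ to del Pezzo surfaces of degrees $9$, $6$, and $5$. What makes the claim true for $d\le3$ is precisely the content of Iskovskikh's classification: from a cubic surface the only links are the Geiser involution (center an orbit of length $1$) and the Bertini involution (center of length $2$), from a degree-$2$ surface only the Bertini involution (center of length $1$), and in particular no type \MakeUppercase{\romannumeral 1} link to a conic bundle exists. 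You explicitly defer this verification (``I expect the heart of the argument to be this case-by-case verification''), so as written your text is a programme rather than a proof. Second, even granting that a link returns to a degree-$d$ del Pezzo surface with $\rho^G=1$, your conclusion that ``by the classification of such surfaces it must be $G$-isomorphic to $S$'' is unjustified and false as a general principle: del Pezzo surfaces of degree $\le3$ vary in moduli, and two of them carrying the same abstract group with invariant Picard rank $1$ need not be $G$-isomorphic. The correct mechanism is that the Geiser and Bertini links are induced by the biregular, $G$-equivariant deck involutions of the (bi)anticanonical maps on the blown-up surfaces, which interchange the two contractions of the link and hence force $S'\cong S$ equivariantly --- this is exactly what the paper imports from \cite{egor} in its degree-$4$ proof. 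With the case-by-case list actually carried out and this last step repaired, your argument becomes the standard proof of the statement.
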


In \cite{das2019g}, Das Dores and Mauri also classified finite groups $G$ for which a del Pezzo surface of degree $2$ or $3$ is $G$-superrigid.

For $K_S^2=4$, the result is essentially a corollary of the classification of $G$-links in \cite{iskovskikh1996factorization}.

\begin{proposition}
    Let $G$ be a subgroup of $\aut(S)$ such that $\rho^G(S)=1$, where $S$ is a del Pezzo surface of degree $4$. The following conditions are equivalent.
    \begin{itemize}
        \item $S$ is $G$-rigid,
        \item $S$ is $G$-solid,
        \item $G$ does not fix a point on $S$ outside of the $(-1)$-curves.
    \end{itemize}
\end{proposition}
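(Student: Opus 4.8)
The plan is to prove the cycle of implications $G$-rigid $\Rightarrow$ $G$-solid $\Rightarrow$ (no fixed point in general position) $\Rightarrow$ $G$-rigid, where ``general position'' means outside the sixteen $(-1)$-curves. The first implication is formal: a $G$-conic bundle $X$ satisfies $\rho^G(X)=2$, whereas $\rho^G(S)=1$, so $X$ is never $G$-isomorphic to $S$. If $S$ is $G$-rigid, every $G$-Mori fibre space $G$-birational to $S$ is $G$-isomorphic to $S$, hence none of them is a conic bundle, i.e. $S$ is $G$-solid.

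For the implication $G$-solid $\Rightarrow$ no fixed point I argue by contraposition and exhibit a $G$-link of type \MakeUppercase{\romannumeral 1}. Suppose $G$ fixes a point $p$ outside the $(-1)$-curves. Since $p$ is $G$-invariant, the blow-up $\sigma\colon\tilde S\to S$ is $G$-equivariant, and because $p$ lies on no line, $\tilde S$ is a smooth del Pezzo surface of degree $3$ with $\rho^G(\tilde S)=2$, the new invariant class being the exceptional curve $E$. Viewing $\tilde S$ as a cubic surface with hyperplane class $-K_{\tilde S}$, the line $E$ is $G$-invariant, so the pencil $|-K_{\tilde S}-E|$ (projection from $E$) is $G$-invariant and defines a $G$-equivariant conic bundle $\tilde S\to\pl$. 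This is precisely a $G$-link of type \MakeUppercase{\romannumeral 1} from $S$, so $S$ is $G$-birational to a $G$-conic bundle and is therefore not $G$-solid.

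It remains to prove that the absence of a fixed point in general position forces $G$-rigidity; this is the heart of the matter and rests on the classification of $G$-links from \cite{iskovskikh1996factorization}. By the Sarkisov program any $G$-birational map out of $S$ factors into $G$-links, each centered at a $G$-orbit of length $<4$. I would enumerate these. A link reaching a conic bundle must be of type \MakeUppercase{\romannumeral 1}, since type \MakeUppercase{\romannumeral 2} links preserve the base $B=\{\mathrm{pt}\}$ and hence send a del Pezzo surface to a del Pezzo surface; and I would check, using the classification, that every type \MakeUppercase{\romannumeral 1} link from a degree-$4$ del Pezzo surface is centered at a single $G$-fixed point in general position, which the hypothesis forbids. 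The remaining links are type \MakeUppercase{\romannumeral 2} self-links $S\dashrightarrow S'$ landing on another degree-$4$ del Pezzo surface with $\rho^G=1$, and I would verify that such an $S'$ is $G$-isomorphic to $S$. Since every intermediate surface produced this way is again a degree-$4$ del Pezzo with $\rho^G=1$ and no general-position fixed point, the argument propagates along the whole chain, and every $G$-Mori fibre space $G$-birational to $S$ is $G$-isomorphic to $S$: thus $S$ is $G$-rigid.

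The main obstacle is this last step: extracting from Iskovskikh's classification the precise list of admissible $G$-links at orbits of length $1$, $2$, and $3$, confirming that the conic-bundle-producing (type \MakeUppercase{\romannumeral 1}) links occur only at general-position fixed points, and that the remaining type \MakeUppercase{\romannumeral 2} links return a $G$-isomorphic surface rather than a genuinely new model. Care is also needed to ensure that no chain of several type \MakeUppercase{\romannumeral 2} links can eventually expose a fixed point and open a type \MakeUppercase{\romannumeral 1} link; this is handled by observing that $G$-isomorphic surfaces carry identical orbit data, so the ``no general-position fixed point'' condition is preserved throughout the chain.
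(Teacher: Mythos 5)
Your proposal is correct and takes essentially the same route as the paper: both reduce everything to the classification of links in \cite{iskovskikh1996factorization}, under which the only links out of a degree-$4$ del Pezzo surface are the type I blow-up of a fixed point off the $(-1)$-curves (giving the cubic-surface conic bundle you describe) and two type II self-links centered at orbits of length $2$ and $3$. The verifications you defer are settled in the paper by identifying those two self-links as the Geiser and Bertini involutions and invoking the known fact (cited from \cite{egor}) that these lead to $G$-isomorphic surfaces, which also disposes of your concern about chains of type II links eventually exposing a new fixed point.
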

\begin{proof}
    Using the classification of Sarkisov links in \cite{iskovskikh1996factorization}, we see that there are three possible links starting from $S$. The first one is the blow-up of a point not lying on a $(-1)$-curve, leading to a $G$-conic bundle. The second one is a Geiser involution, centered at a an orbit of length $2$, and the last one is a Bertini involution, centered at an orbit of length $3$. But as mentioned in \cite{egor}, Geiser and Bertini involutions lead to $G$-isomorphic surfaces.
\end{proof}

\section{Del Pezzo surfaces of degree $5$}

Up to isomorphism, there is only one smooth del Pezzo surface $S$ of degree $5$, given by the blow-up of $\pp$ in four points in general position. Its group of automorphisms is isomorphic to $\scinq$, nicely described for example in \cite{theseblanc}.

\begin{lemma}[\cite{dois}]
    Let $G\subset\aut(S)$. Then $\rho^G(S)=1$ if and only if $G$ is isomorphic to $\scinq$, $\ac$, $\dih_5$, $\dz_5$, or $F_5=\dz_5\rtimes\dz_4$\footnote{This is the group of GAP ID $(20,3)$.}.
\end{lemma}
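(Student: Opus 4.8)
The plan is to translate the condition $\rho^G(S)=1$ into a purely representation-theoretic statement about the natural permutation action of $\aut(S)\cong\scinq$ on five letters. First I would recall the identification of $\aut(S)$ with the Weyl group $W(A_4)\cong\scinq$: the action of $\aut(S)$ on $\pic(S)$ fixes $K_S$, hence preserves the orthogonal complement $K_S^\perp$, a rank-$4$ lattice isomorphic to the root lattice $A_4=\{x\in\dz^5:\sum_i x_i=0\}$, on which $\scinq$ acts by permuting coordinates. Tensoring with $\dc$ then gives a decomposition
$$\pic(S)\otimes\dc=\dc\cdot K_S\oplus W,$$
where $W=K_S^\perp\otimes\dc$ is the standard $4$-dimensional irreducible representation of $\scinq$.

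Next I would compute $\rho^G(S)=\dim\big(\pic(S)\otimes\dc\big)^G=1+\dim W^G$, using that taking invariants commutes with rationalization for a finite group. Writing $\dc^5=\dc_{\mathrm{triv}}\oplus W$ for the permutation representation gives $\dim(\dc^5)^G=1+\dim W^G=\rho^G(S)$, and Burnside's orbit-counting lemma then yields
$$\rho^G(S)=\dim(\dc^5)^G=\#\{\text{orbits of }G\text{ on }\{1,\dots,5\}\}.$$
This is the heart of the argument: it reduces the geometric statement to the combinatorics of the $\scinq$-action and shows immediately that $\rho^G(S)=1$ if and only if $G$ acts transitively on the five letters.

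It then remains to list the transitive subgroups of $\scinq$ up to conjugacy. Here I would argue that a transitive subgroup $G$ has order divisible by $5$ by the orbit-stabilizer theorem, hence by Cauchy's theorem contains an element of order $5$, necessarily a $5$-cycle; conversely any subgroup containing a $5$-cycle is transitive. The transitive subgroups are therefore exactly those of order divisible by $5$, and a standard enumeration shows these are, up to conjugacy, $\dz_5$, $\dih_5$, $F_5=\dz_5\rtimes\dz_4$, $\ac$, and $\scinq$ — precisely the groups in the statement.

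The point requiring the most care, and hence the main obstacle, is that $\rho^G(S)$ depends a priori on the conjugacy class of $G$ in $\scinq$ rather than on its abstract isomorphism type, whereas the lemma is phrased in terms of isomorphism type. I would resolve this with the observation above: any subgroup whose order is divisible by $5$ contains an element of order $5$, which must act as a $5$-cycle and so forces transitivity regardless of the embedding, while any subgroup of order coprime to $5$ is intransitive and gives $\rho^G(S)\ge 2$. Since the five listed groups are pairwise non-isomorphic and are exactly the subgroups of order divisible by $5$, the isomorphism type determines $\rho^G(S)$, which completes the equivalence.
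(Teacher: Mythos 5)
The paper offers no proof of this lemma at all --- it is quoted directly from Dolgachev--Iskovskikh \cite{dois} --- so there is no internal argument to compare against; judged on its own terms, your proof is correct and complete, and it is essentially the classical argument underlying the cited result. The chain of reductions is sound: $\aut(S)\cong W(A_4)\cong\scinq$ acts faithfully on $\pic(S)$ fixing $K_S$, the complement $K_S^\perp\otimes\dc$ is the standard $4$-dimensional representation, and hence $\rho^G(S)=\dim(\dc^5)^G$ equals the number of $G$-orbits on five letters, so $\rho^G(S)=1$ exactly for transitive subgroups. Your resolution of the one delicate point --- that the lemma is phrased in terms of abstract isomorphism type while transitivity is a priori an embedding-dependent property --- is exactly right: a subgroup of $\scinq$ is transitive if and only if its order is divisible by $5$ (orbit--stabilizer in one direction, Cauchy plus the fact that every element of order $5$ in $\scinq$ is a $5$-cycle in the other), so the isomorphism type does determine $\rho^G(S)$ here, and the enumeration $\dz_5$, $\dih_5$, $F_5$, $\ac$, $\scinq$ of transitive subgroups up to conjugacy is standard and correctly invoked.
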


Let us recall the groups for which the $G$-solidity of $S$ is known. The cases of $\ac$ and $\scinq$ were solved by Cheltsov, and $G=F_5$ by Wolter.

\begin{proposition}[\cite{cheltsov2007log}, \cite{cheltsov2014two}]
	If $G\subset\aut(S)$ is isomorphic to $\ac$ or $\scinq$, then $S$ is $G$-superrigid.
\end{proposition}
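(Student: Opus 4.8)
The plan is to run the $G$-Sarkisov program and show that no elementary link can even issue from $S$. Since $\rho^G(S)=1$ for $G\cong\ac$ and for $G\cong\scinq$ by the preceding lemma of \cite{dois}, the surface $S$ is a $G$-Mori fibre space, so by \cite{iskovskikh1996factorization} any $G$-birational map $S\dashrightarrow S'$ to a $G$-Mori fibre space factors into finitely many $G$-links. A link starting from a del Pezzo surface must be of type \MakeUppercase{\romannumeral 1} or \MakeUppercase{\romannumeral 2}, and, as noted just after the Sarkisov theorem, such a link is centred at a $G$-orbit of length strictly less than $d=K_S^2=5$, hence of length at most $4$. Therefore it suffices to prove that $S$ carries no $G$-orbit of length $\le 4$: then no link can start from $S$, every factorisation is trivial, and every $G$-birational map out of $S$ is a $G$-isomorphism, which is precisely $G$-superrigidity.

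The second step is to convert the nonexistence of short orbits into group theory. A $G$-orbit of length $k$ has point stabiliser of index $k$, so an orbit of length $k\le 4$ forces a subgroup $H\subseteq G$ with $[G:H]\le 4$. For $G\cong\ac$ the minimal index of a proper subgroup is $5$ (the point stabilisers $\aq$), so the only possibility is $H=G$, i.e. a global fixed point. For $G\cong\scinq$ the only proper subgroup of index at most $4$ is $\ac$ (of index $2$): an index $3$ or $4$ subgroup would give a transitive action on $3$ or $4$ points, i.e. a homomorphism $\scinq\to\st$ or $\scinq\to\sq$ with transitive image; since the only proper normal subgroup of $\scinq$ is $\ac$ and $\scinq/\ac\cong\dz_2$, the image would have order $1$, $2$, or $120$, none of which is both transitive of the required degree and contained in $\st$ or $\sq$. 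Hence in every case the putative stabiliser $H$ contains a copy of $\ac$.

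The final step rules this out via the local representation at a fixed point. The stabiliser $H$ fixes the corresponding point $p\in S$, so it acts on the tangent plane $T_pS\cong\dc^2$. Since each element of $H$ has finite order and fixes $p$, it is linearisable near $p$, so an element acting trivially on $T_pS$ is trivial on the connected surface $S$; thus the tangent representation $H\hookrightarrow\gl_2(\dc)$ is faithful, giving $\ac\hookrightarrow\gl_2(\dc)$. But $\ac$ has no faithful two-dimensional representation: its nontrivial irreducible representations have dimensions $3,3,4,5$, and its only one-dimensional representation is trivial. This contradiction shows that no stabiliser of index $\le 4$ exists, so $S$ has no $G$-orbit of length $\le 4$, and the argument is complete.

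The computations involved are light; the two points deserving care are the faithfulness of the tangent representation at a fixed point (the linearisation of a finite-order automorphism at a smooth fixed point) and the completeness of the index analysis for $\scinq$. I would also stress that the very same orbit count simultaneously excludes the type \MakeUppercase{\romannumeral 2} self-links that are responsible for mere $G$-rigidity, which is exactly why one obtains $G$-superrigidity rather than only $G$-rigidity for these two large groups.
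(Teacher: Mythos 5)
Your proof is correct, and it is worth noting that the paper itself contains no proof of this proposition: it is imported as a quotation from \cite{cheltsov2007log} and \cite{cheltsov2014two}. Those sources establish superrigidity by the Noether--Fano method (multiplicity and log canonical threshold estimates along a mobile non-canonical linear system), which ultimately reduces to the same combinatorial fact you isolate, namely that $S$ carries no $G$-orbit of length at most $4$. Your route replaces the analytic step by the Sarkisov formalism already set up in the paper --- a link of type \MakeUppercase{\romannumeral 1} or \MakeUppercase{\romannumeral 2} issuing from a del Pezzo surface is centred at an orbit of length strictly less than $K_S^2$, so the absence of short orbits kills every link, every factorisation is empty, and superrigidity (not merely rigidity) follows at once. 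Your exclusion of short orbits is complete: the index bound for $\ac$ is immediate from simplicity (an index $k\le 4$ subgroup would embed $\ac$ into $\sq$), the coset-action argument correctly shows $\ac$ is the only proper subgroup of $\scinq$ of index at most $4$, and the final contradiction --- faithfulness of the tangent representation of a finite stabiliser at a smooth fixed point together with the fact that every two-dimensional representation of the perfect group $\ac$ is trivial (its nontrivial irreducibles have dimensions $3,3,4,5$) --- is sound. This is in fact the same orbit-counting style of argument the paper deploys for $\pl\times\pl$ in Section \ref{dp8}, so your proof is arguably more in the spirit of the present note than the cited ones; what the Noether--Fano approach of \cite{cheltsov2007log} and \cite{cheltsov2014two} buys instead is independence from the full two-dimensional Sarkisov classification of \cite{iskovskikh1996factorization}, at the cost of heavier local computations.
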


\begin{proposition}[\cite{wolt}]
	If $G\subset\aut(S)$ is isomorphic to $F_5$, then $S$ is $G$-solid.
\end{proposition}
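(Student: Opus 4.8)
The plan is to invoke the Sarkisov program (the Theorem recalled above): $S$ is $G$-solid if and only if no chain of $G$-links starting from $S$ ever reaches a $G$-conic bundle. Since $S$ is the unique del Pezzo surface of degree $5$ and every $G$-link of type I or II out of $S$ is centered at a $G$-orbit of length strictly less than $d=5$, the first task is to classify the $F_5$-orbits on $S$ of length $1,2,3,4$. Write $F_5=\langle\sigma,\tau\rangle$ with $\sigma$ of order $5$, $\tau$ of order $4$, and $\tau\sigma\tau^{-1}=\sigma^2$. I would first pin down $\mathrm{Fix}(\sigma)$: as $\sigma$ acts on $\pic(S)\otimes\dc=\dc K_S\oplus(A_4\otimes\dc)$ as a Coxeter element of $W(A_4)$, with eigenvalues the primitive fifth roots of unity, its trace on $H^2$ is $1+(-1)=0$, so the topological Lefschetz number is $1+0+1=2$ and $\sigma$ has exactly two isolated fixed points $q_1,q_2$. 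An eigenvalue argument then shows $\tau$ fixes neither: at a common fixed point $d\tau$ would conjugate $d\sigma$ to $(d\sigma)^2$, forcing the tangent eigenvalues $\{\alpha,\beta\}=\{\alpha^2,\beta^2\}$, which is impossible for fifth roots of unity $\ne 1$. Hence $\tau$ swaps $q_1\leftrightarrow q_2$ and $F_5$ fixes no point of $S$.

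Next I would eliminate the remaining short orbits. An orbit of length $3$ is impossible since $3\nmid 20$; an orbit of length $4$ would have stabilizer $\langle\sigma\rangle$, forcing its points into $\mathrm{Fix}(\sigma)=\{q_1,q_2\}$, a contradiction; and a length-$2$ orbit has stabilizer the unique index-$2$ subgroup $\dih_5\subset F_5$, with $\mathrm{Fix}(\dih_5)\subseteq\mathrm{Fix}(\sigma)=\{q_1,q_2\}$. Thus $\{q_1,q_2\}$ is the only $F_5$-orbit of length $<5$. One checks it is in general position: a $(-1)$-curve through $q_i$ would be $\sigma$-invariant, but $\sigma$ acts on the ten $(-1)$-curves (the two-element subsets of $\dz/5$) with two orbits of length $5$ and no fixed curve, so $q_1,q_2$ lie on no $(-1)$-curve.

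It then remains to analyze the single admissible center $\{q_1,q_2\}$. Let $\widetilde S\to S$ be the blow-up at $\{q_1,q_2\}$, with exceptional curves $E_1,E_2$ interchanged by $\tau$. Then $\widetilde S$ is a del Pezzo surface of degree $3$ with $\rho^{F_5}(\widetilde S)=2$, its invariant Picard group being spanned over $\dc$ by $-K_{\widetilde S}$ and $e=E_1+E_2$. The decisive point is the intersection form on this rank-$2$ space: from $K_{\widetilde S}^2=3$, $(-K_{\widetilde S})\cdot e=2$ and $e^2=-2$ one reads off the Gram matrix $\left(\begin{smallmatrix}3&2\\2&-2\end{smallmatrix}\right)$, of determinant $-10$. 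The form $x(-K_{\widetilde S})+ye\mapsto 3x^2+4xy-2y^2$ has irrational isotropic directions $x/y=(-2\pm\sqrt{10})/3$, so the rank-$2$ invariant space contains no nonzero rational — in particular no integral — class $f$ with $f^2=0$. Consequently $\widetilde S$ admits no $F_5$-conic bundle structure: the second extremal ray of $\overline{NE}(\widetilde S)^{F_5}$ (the first being $[e]$, which contracts back to $S$) is divisorial, so the only $G$-link out of $S$ is of type II, landing on a del Pezzo surface $S'$ with $\rho^G(S')=1$; no type I link issues from $S$.

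Finally I would close the loop by identifying $S'$, showing that contracting the second ray realizes a classical birational involution of $S$, i.e. a self-link $S\dashrightarrow S$ onto a degree-$5$ del Pezzo surface carrying a conjugate $F_5$-action, so that the relevant part of the Sarkisov graph consists entirely of degree-$5$ del Pezzo surfaces, none of which admits a type I link by the computation above. Hence no $G$-conic bundle is $G$-birational to $S$, and $S$ is $G$-solid. The orbit classification and the anisotropy of the invariant lattice are the clean, decisive inputs; I expect the main obstacle to be precisely this last step — controlling every del Pezzo surface reachable from $S$ under iterated links and verifying that the Sarkisov graph never leaks into a conic bundle — which is exactly what distinguishes $F_5$ from its subgroups $\dz_5$ and $\dih_5$, the latter fixing a point $q_i$ and hence admitting a genuine type I link.
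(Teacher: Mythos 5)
The paper itself offers no proof of this proposition --- it imports it wholesale from Wolter's paper \cite{wolt} --- so your attempt can only be judged on its own correctness. Your first two steps are essentially sound and well chosen: the orbit classification (no fixed points, no orbits of length $3$ or $4$, the unique orbit of length $<5$ being $\{q_1,q_2\}=\mathrm{Fix}(\sigma)$) and the anisotropy of the invariant Gram matrix $\left(\begin{smallmatrix}3&2\\2&-2\end{smallmatrix}\right)$ on the blow-up $\widetilde S$, which correctly rules out any invariant class $f$ with $f^2=0$, hence any $G$-conic bundle structure on $\widetilde S$ and any type \MakeUppercase{\romannumeral 1} exit at this center. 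Three repairable slips: the Lefschetz number $2$ alone does not exclude a pointwise-fixed curve (its class would be $\sigma$-invariant, hence anticanonical, and one must kill this case, e.g.\ via the explicit model of $\dz_5$ on the blow-up of a $5$-orbit in $\pp$ together with uniqueness of $\dz_5$ in $\mathfrak S_5$ up to conjugacy); a $(-1)$-curve through $q_i$ need not itself be $\sigma$-invariant --- rather all five lines of its $\sigma$-orbit would pass through $q_i$, contradicting the $3$-valency of the Petersen graph; and ``general position'' also requires checking that no conic passes through both $q_1,q_2$, so that $\widetilde S$ is honestly del Pezzo rather than weak.

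The genuine gap is your final step. The unique type \MakeUppercase{\romannumeral 2} link centered at $\{q_1,q_2\}$ does \emph{not} return to a quintic del Pezzo surface: on the cubic $\widetilde S$ the second invariant extremal ray contracts the $G$-orbit of the five pairwise disjoint lines meeting both $E_1$ and $E_2$ (the five common transversals of two skew lines on a cubic; classes $\pi^*c-E_1-E_2$ with $c^2=1$, $c\cdot(-K_S)=3$), and this contraction lands on a del Pezzo surface of degree $3+5=8$ with invariant Picard rank $1$, i.e.\ on $\pl\times\pl$ with a ruling-swapping $F_5$-action. (That the ray differs from $[e]$ is immediate: $e\cdot e=-2<0$ while $e\cdot\sum T_i=10>0$.) So your assertion that ``the relevant part of the Sarkisov graph consists entirely of degree-$5$ del Pezzo surfaces'' is false, and the argument as written does not close. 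The paper's own degree-$8$ section corroborates this: its proposition for toric part $T\cong\dz_5$ exhibits precisely the inverse link, from $(\pl\times\pl,F_5)$ centered at a length-$5$ orbit in general position, back to the quintic. To finish, you must run the same analysis on $(\pl\times\pl,F_5)$: there are no orbits of length $1,2,3$; the length-$4$ orbit of the torus-fixed vertices is not in general position (adjacent vertices share a ruling fiber), so it carries no link; and the remaining links are either Bertini/Geiser involutions (yielding $G$-isomorphic surfaces) or the length-$5$ link back to the quintic. With that supplement --- which is exactly the degree-$8$ half that the paper proves while citing \cite{wolt} for the half you attempted --- the two-vertex Sarkisov graph $\{S,\pl\times\pl\}$ admits no conic-bundle exit, and $G$-solidity follows.
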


What remains to study is the $G$-solidity of $S$ for $G$ isomorphic to $\dz_5$ or $\dih_5$. We will manage to avoid studying the $G$-orbits on $S$, and only need to use the $G$-birational geometry of $\pp$.

\begin{proposition}
    If $G$ is isomorphic to $\dz_5$ or $\dih_5$, then $S$ is not $G$-solid.
\end{proposition}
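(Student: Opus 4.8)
The plan is to produce a $G$-equivariant birational map between $S$ and $\pp$ carrying a linear $G$-action, and then to exploit the fact that any faithful linear action of $\dz_5$ or $\dih_5$ on $\pp$ fixes a point. Once $G$ fixes a point $p\in\pp$, blowing up $p$ produces a $G$-equivariant $\pl$-fibration (the pencil of lines through $p$) with $\rho^G=2$, so $\pp$ is $G$-birational to a $G$-conic bundle and hence is not $G$-solid. Transporting this conclusion across the birational map shows that $S$ is not $G$-solid. In this way the whole argument takes place on $\pp$ and avoids analyzing the $G$-orbits on $S$ itself.

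First I would fix the linear actions. For $G=\dz_5=\langle g\rangle$ I take $g=\mathrm{diag}(1,\zeta,\zeta^{-1})$ with $\zeta$ a primitive fifth root of unity; for $G=\dih_5$ I realize $\dc^3$ as the sum of the trivial representation and a two-dimensional faithful irreducible representation, so that $g$ acts by $\mathrm{diag}(1,\zeta,\zeta^{-1})$ and a reflection $\tau$ acts by $(x:y:z)\mapsto(x:z:y)$. In both cases the point $p=(1:0:0)$ coming from the trivial summand is $G$-fixed. Next I choose a $G$-invariant set $\Sigma$ of five points in general position. For $\dz_5$ this is a free orbit $\{g^k(1:1:1)\}_{k=0}^{4}$; for $\dih_5$ I take a point on the reflection line $\{y=z\}$, say $q=(a:1:1)$ with $a$ general, whose orbit $\{(a:\zeta^k:\zeta^{-k})\}_{k=0}^{4}$ has length five (its stabilizer is $\langle\tau\rangle$) and is preserved by $G$. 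For general parameters these five points are distinct with no three collinear.

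I then blow up $\Sigma$. The result is a del Pezzo surface $\tilde S$ of degree $4$ with an induced $G$-action, and since $G$ permutes the five exceptional curves exactly as it permutes $\Sigma$, a direct count in the invariant Picard lattice (spanned by the hyperplane class and the sum of the exceptional curves) gives $\rho^G(\tilde S)=2$. The unique conic through $\Sigma$ is $G$-invariant, and its strict transform is a $G$-invariant $(-1)$-curve on $\tilde S$. Contracting it is $G$-equivariant and yields a del Pezzo surface $S'$ of degree $5$ with $\rho^G(S')=1$. By the lemma of Dolgachev and Iskovskikh quoted above, together with the uniqueness of the conjugacy class of $\dz_5$, respectively $\dih_5$, inside $\aut(S)\cong\scinq$, the pair $(S',G)$ is $G$-isomorphic to $(S,G)$. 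Composing the blow-up and the contraction then exhibits the desired $G$-birational map $S\dashrightarrow\pp$.

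The main point to check carefully is that this two-step link is genuinely necessary and correctly normalized. There is no $G$-equivariant birational \emph{morphism} $S\to\pp$: the five contractions of $S$ to $\pp$ (equivalently, the five conic bundle structures) are permuted transitively by a $5$-cycle, so none is $G$-invariant, which is precisely why one must pass through the degree-$4$ surface. The remaining verifications are the genericity statements — that $\Sigma$ lies in general position so that $\tilde S$ is a smooth del Pezzo surface of degree $4$ and the conic through $\Sigma$ is smooth — and the identification of the contracted action with the prescribed conjugacy class in $\scinq$, which follows from $\rho^G(S')=1$ together with the classification in the lemma. I expect this last identification of conjugacy classes to be the only delicate step; everything else reduces to a dimension count in the invariant Picard lattice.
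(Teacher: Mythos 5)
Your proof is correct and takes essentially the same route as the paper: the same diagonal generator $\mathrm{diag}(1,\mu_5,\mu_5^{-1})$ and coordinate swap on $\pp$, the fixed point $(1:0:0)$ yielding the link to the $G$-conic bundle $\mathbb F_1$, a length-five orbit in general position (the paper uses the orbit of $(1:1:1)$ for both groups) blown up with the conic through it contracted to reach the degree-$5$ del Pezzo surface, and the uniqueness of $\dz_5$ and $\dih_5$ up to conjugacy in $\scinq$ to transfer the conclusion to every such subgroup of $\aut(S)$. Your extra verifications (the invariant Picard rank counts, genericity of the orbit, and the non-existence of a $G$-equivariant morphism $S\to\pp$) merely make explicit what the paper leaves implicit.
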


\begin{proof}
    Consider the matrices $M=\begin{pmatrix}1&0&0\\0&\mu_5&0\\0&0&\mu_5^{-1}\end{pmatrix}\in\pgl_3(\dc)$, where $\mu_5$ is a primitive fifth root of the unity, and $N=\begin{pmatrix}1&0&0\\0&0&1\\0&1&0\end{pmatrix}$. We have $H:=\left<M\right>\cong\dz_5$, and $H':=\left<M,N\right>\cong \dih_5$, under the action of both groups, the point $(1:1:1)$ have an orbit of length $5$ in general position. Blowing-up this orbit and contracting the proper transform of the conic passing through the five points gives a $G$-link from $\pp$ to a del Pezzo surface of degree $5$, for any $G\in\{H,H'\}$. On the other hand, for any $G\in\{H,H'\}$, the point $(1:0:0)\in\pp$ is fixed under the action of $G$. Hence, we can blow it up and get a $G$-link to the Hirzebruch surface $\mathbb F_1$, with a $G$-conic bundle structure. Since $\dz_5$ and $\dih_5$ are unique in $\scinq$ up to conjugacy, we conclude that $S$ is not $G$-solid for any subgroup $G$ of $\aut(S)$ isomorphic either to $\dz_5$ or to $\dih_5$.
\end{proof}

\section{Del Pezzo surfaces of degree $8$}\label{dp8}
Let $S=\pl\times\pl$. Recall that this is the only del Pezzo surface of degree $8$ we have to study, since the blow-up of $\pp$ at a point cannot be a $G$-del Pezzo surface. The automorphism group of $S$ is isomorphic to $(\pgl_2(\dc)\times\pgl_2(\dc))\rtimes\dz_2$, where $\dz_2$ acts on the direct product $\pgl_2(\dc)\times\pgl_2(\dc)$ by permuting its factors. We will use affine coordinates $x$ and $y$ and have the two $\pgl_2(\dc)$ components act on them, respectively. The action of $\dz_2$ is given by the permutation of $x$ and $y$. For example, an automorphism written $(x_0:x_1)\times(y_0:y_1)\mapsto(y_1:y_0)\times(x_0:x_1)$ will be denoted as $(\frac{1}{y},x)$. To begin with, we will give an example of a group $G\subset\aut(S)$ such that the surface $S$ is not $G$-solid.

\begin{example}\label{2fixed}
    Let $\pi_1$ and $\pi_2$ be the two canonical projections from $S$ to $\pl$. If two points $P$ and $Q$ are such that $\pi_i(P)\ne\pi_i(Q)$ for $i=1$ and $i=2$, we say that they are \textit{in general position}. Assume that a subgroup $G\subset\aut(S)$ fixes two points in general position. Then there is a $G$-birational map from $S$ to the $G$-conic bundle $\mathbb F_1$, which decomposes into two $G$-links as follows:
    \[
        \xymatrix
            {
                 &Z_7\ar[ld]_{\sigma_1}\ar[rd]^{\sigma_2}&&\mathbb F_1\ar[d]\ar[ld]_{\sigma_3}\\
                 S\ar@{-->}[rr]&&\pp&\pl
            }
        \]
        where $\sigma_1$, $\sigma_2$ and $\sigma_3$ are blow-ups at a point. Here is a list of explicit cases in which it happens:
        \begin{itemize}
            \item If $G=\left<s,t_n\right>$, with $s=(y,x)$, and $t=(\mu_nx,\mu_n^{-1}y)$, then $G\cong \dih_n$.
            \item If $G=\left<\sigma,\tau_n\right>$, with $\sigma=(y,-x)$ and $\tau_n=(ix,iy)$, then $G\cong Q_8$.
        \end{itemize}
\end{example}

\begin{remark}\label{glinksp1p1}
        Let $\phi\colon S\dashrightarrow S'$ be a $G$-link centered at $k$ points. Using the classification of Sarkisov links in \cite{iskovskikh1996factorization}, we see that, if $k\ge6$, then $\phi$ is either a Bertini or a Geiser involution. Once again, as mentioned in \cite{egor}, such link leads to a $G$-isomorphic surface. It follows that if $S'$ is not $G$-isomorphic to $S$, then $\phi$ is of one of the following forms.
    \begin{center}
        \begin{tabular}{|c|c|c|}
            \hline
            $\xymatrix
                {
                     &Z_7\ar[ld]\ar[rd]&\\
                     S\ar@{-->}[rr]&&\pp
                }$&$\xymatrix
                {
                     &Z_6\ar[ld]\ar[d]\\
                     S&\pl
                }$&$\xymatrix
                {
                     &Z_5\ar[ld]\ar[rd]&\\
                     S\ar@{-->}[rr]&&Z_6
                }$\\
    \footnotesize{$k=1$}&\footnotesize{$k=2$}&\footnotesize{$k=3$}\\
            \hline
            $\xymatrix
                {
                     &Z_4\ar[ld]\ar[rd]&\\
                     S\ar@{-->}[rr]&&S
                }$&&$\xymatrix
                {
                     &Z_3\ar[ld]\ar[rd]&\\
                     S\ar@{-->}[rr]&&Z_5
                }$\\
            \footnotesize{$k=4$}&&\footnotesize{$k=5$}\\
            \hline
        \end{tabular}
    \end{center}
    A surface denoted by $Z_k$ is a del Pezzo surface of degree $k$. We deduce that if $G$ does not have any orbit of length $k\le5$, then $S$ is $G$-rigid. One can mention that every pair of points in the center of one of the links of Remark \ref{glinksp1p1} must be in general position, as defined in Example \ref{2fixed}.
\end{remark}

\subsection{Toric subgroups of $\aut(\pl\times\pl)$}

We can embed $(\dc^*)^2$ as a dense torus in $S$ by the map $\iota\colon(\dc^*)^2\hookrightarrow S$, $(a,b)\mapsto(1:a)\times(1:b)$, whose image will be called $\mathfrak T$. Moreover, any dense torus in $S$ is equal to $\mathfrak T$ up to an automorphism of $S$. The action of $(\dc^*)^2$ on itself by translation extends to a faithful action on the whole variety $S$, by identifying $(\dc^*)^2$ to the subgroup $\dt=\{(ax,by),a,b\in\dc^*\}$ of $\aut(S)$.

\begin{lemma}\label{es1}
    There is an exact sequence
\[\begin{tikzcd}
	1 & {\mathbb{T}} & {N(\mathbb{T})} & {\dih_4} & {1.}
	\arrow[from=1-1, to=1-2]
	\arrow[from=1-2, to=1-3]
	\arrow["w", from=1-3, to=1-4]
	\arrow[from=1-4, to=1-5]
\end{tikzcd}\label{es}\]
\end{lemma}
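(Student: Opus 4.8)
The plan is to realize $\mathbb{T}$ as a maximal torus of the connected reductive group $\aut(S)^\circ=\pgl_2(\dc)\times\pgl_2(\dc)$ and to read off $N(\mathbb{T})/\mathbb{T}$ as the associated Weyl-type group, enlarged by the factor swap. Writing $\aut(S)=(\pgl_2(\dc)\times\pgl_2(\dc))\rtimes\langle s\rangle$ with $s=(y,x)$, I take $w$ to be the homomorphism recording the conjugation action of $N(\mathbb{T})$ on $\mathbb{T}$, valued in $\aut(\mathbb{T})\cong\gl_2(\dz)$. Exactness at $\mathbb{T}$ is the obvious injectivity of the inclusion, and exactness at $N(\mathbb{T})$ amounts to identifying $\ker w$ with $\mathbb{T}$, so the real content is to compute the image of $w$ and show it is $\dih_4$.

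First I would establish two structural facts. (i) The centralizer $C_{\aut(S)}(\mathbb{T})$ equals $\mathbb{T}$: an automorphism commuting with every $(ax,by)$ cannot interchange the two rulings, since conjugating $(ax,by)$ by $s$ yields $(bx,ay)$, which distinguishes the factors; hence it lies in $\pgl_2(\dc)\times\pgl_2(\dc)$, where the centralizer of the diagonal torus $D\times D$ is $D\times D=\mathbb{T}$ itself. This gives $\ker w=\mathbb{T}$. (ii) $N(\mathbb{T})=N_{\pgl_2\times\pgl_2}(\mathbb{T})\rtimes\langle s\rangle$: any $h\in N(\mathbb{T})$ either preserves or swaps the rulings; in the first case $h\in N(\mathbb{T})\cap(\pgl_2\times\pgl_2)=N_{\pgl_2\times\pgl_2}(\mathbb{T})$, and in the second $sh$ preserves the rulings and again lies in $N_{\pgl_2\times\pgl_2}(\mathbb{T})$. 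Here I use that $s$ itself normalizes $\mathbb{T}$ and normalizes $N_{\pgl_2\times\pgl_2}(\mathbb{T})$, since it merely interchanges the two factors.

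Next I would identify the pieces and their relations. In a single $\pgl_2(\dc)$ the normalizer of the diagonal torus is generated by the torus and the Weyl involution $x\mapsto 1/x$, so $N_{\pgl_2\times\pgl_2}(\mathbb{T})$ is generated by $\mathbb{T}$ together with $i_1=(\tfrac1x,y)$ and $i_2=(x,\tfrac1y)$, whence $N_{\pgl_2\times\pgl_2}(\mathbb{T})/\mathbb{T}\cong\dz_2\times\dz_2$. Computing the conjugation action on $\mathbb{T}\cong(\dc^*)^2$, the images $w(i_1),w(i_2),w(s)$ are the sign change $(a,b)\mapsto(a^{-1},b)$, the sign change $(a,b)\mapsto(a,b^{-1})$, and the transposition $(a,b)\mapsto(b,a)$; as elements of $\gl_2(\dz)$ these are exactly the $2\times2$ signed permutation matrices. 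Combined with (i) and (ii), this shows that the image of $w$ is the group of signed permutation matrices, of order $4\cdot2=8$, and that $w$ surjects onto it. Finally I would record the small group-theoretic identity $\dz_2\wr\dz_2\cong\dih_4$: setting $\rho=w(i_1)w(s)$ one checks $\rho^4=1$ and $w(s)\,\rho\,w(s)=\rho^{-1}$, so the image is the dihedral group of order $8$, completing the exact sequence.

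The step I expect to be the main obstacle is (ii), namely pinning down $N(\mathbb{T})$ exactly and ruling out extra normalizing automorphisms beyond the factorwise Weyl involutions and the swap. This is where I must use the structure $\aut(S)=(\pgl_2\times\pgl_2)\rtimes\dz_2$ honestly, observing that an element of $N(\mathbb{T})$ acts on the set of two rulings and that, after correcting by $s$ when it swaps them, the problem reduces to the classical fact that the normalizer of a maximal torus of $\pgl_2(\dc)$ modulo the torus is its Weyl group $\dz_2$. Everything else is the bookkeeping of the conjugation action and the identification $\dz_2\wr\dz_2\cong\dih_4$.
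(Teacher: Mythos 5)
Your proof is correct, but it takes a genuinely different route from the paper's. The paper defines $w$ geometrically: it observes that $N(\dt)$ must preserve the unique dense $\dt$-orbit $\mathfrak T$, hence also its complement, the boundary divisor $C=\pi_1^{-1}(1:0)+\pi_1^{-1}(0:1)+\pi_2^{-1}(1:0)+\pi_2^{-1}(0:1)$, and takes $w$ to be the induced action on the symmetries of this square of curves; the kernel is identified with $\dt$ as the set of automorphisms preserving each irreducible component of $C$, and surjectivity is witnessed by $r$ and $s$. You instead argue inside the algebraic group $\aut(S)=(\pgl_2(\dc)\times\pgl_2(\dc))\rtimes\dz_2$: you split $N(\dt)$ along the ruling swap, reduce to the classical fact that $N_{\pgl_2(\dc)}(D)/D\cong\dz_2$ for a maximal torus $D$, and read off the quotient as the group of signed permutation matrices in $\gl_2(\dz)$ acting on the cocharacter lattice, i.e. $\dz_2\wr\dz_2\cong\dih_4$. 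The two homomorphisms in fact coincide under the identification of the four boundary curves with the rays $\pm e_1,\pm e_2$ of the fan, so either construction proves the lemma. What the paper's construction buys is the characterization of $N(\dt)$ as the full stabilizer of the square $C$ (Remark \ref{postes1}), which is exploited later to show that certain groups are toric; what yours buys is explicit generators $i_1=(\tfrac1x,y)$, $i_2=(x,\tfrac1y)$, $s$, a lattice-theoretic description of $w$, and in particular a self-contained proof of surjectivity and of $\ker w=\dt$ that does not appeal to the boundary geometry. One small point to tighten: in your step (i) you exclude ruling-swapping centralizing elements by conjugating with $s$ alone, whereas a general such element has the form $(g_1,g_2)s$; the repair is one line, since $(g_1,g_2)s$ conjugates $(ax,y)$ to an element whose first component is $g_1\circ\mathrm{id}\circ g_1^{-1}=\mathrm{id}$, which cannot equal $x\mapsto ax$ for $a\neq1$, so no such element centralizes $\dt$.
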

\begin{proof}
    First, notice that $N(\dt)$ leaves $\mathfrak T$ invariant. Indeed, the normalizer of $\dt$ permutes the $\dt$-orbits, and $\mathfrak T$ is the only one which is dense in $S$. The complement of $\mathfrak T$ in $S$ is the divisor $C:=\pi_1^{-1}(1:0)+\pi_1^{-1}(0:1)+\pi_2^{-1}(1:0)+\pi_2^{-1}(0:1)$, where $\pi_1$ (resp. $\pi_2$) is the canonical projection from $\pl\times\pl$ to the left (resp. right) factor $\pl$. Intersection number being preserved by automorphisms, any element of $N(\mathbb T)$ induces a symmetry of the square formed by $C$, thus giving a group homomorphism $w\colon N(\mathbb T)\rightarrow \dih_4$. Finally, the group $\dt$ is the set of automorphisms which preserve each irreducible curve of the divisor $C$. In other words, the kernel of $w$ is $\dt$. Moreover, $w$ is surjective, hence the exact sequence.
\end{proof}

\begin{remark}\label{postes1}
    Notice that $N(\dt)$ is exactly the set of automorphisms preserving the square $C$. This will be useful later on for proving that a subgroup $G$ of $\aut(S)$ is contained in $N(\dt)$.
\end{remark}

Let $G$ be a subgroup of $N(\dt)$, and $T=G\cap\dt$. The restriction of $w$ to $G$ induces the exact sequence \[\begin{tikzcd}
	1 & {T} & {G} & {W} & {1,}
	\arrow[from=1-1, to=1-2]
	\arrow[from=1-2, to=1-3]
	\arrow["w", from=1-3, to=1-4]
	\arrow[from=1-4, to=1-5]
\end{tikzcd}\]for some subgroup $W$ of $\dih_4$.

We will call \textit{toric} a subgroup of $\aut(S)$ conjugated to a subgroup of $N(\dt)$. Consider the automorphisms $r=(\frac{1}{y},x)$, and $s=(y,x)$. Both belong to $N(\dt)$, and they generate a group isomorphic to $\dih_4$, on which $w$ restricts to an isomorphism. In \cite{cheltsov2023toric}, the authors mention without proof that if $G$ is a subgroup of $N(\dt)$ such that $\rho^G(S)=1$, then its image $W$ in $\dih_4$ must contain $\dz_4$. We prove this result here, and state it in a slightly stronger way, adding that an element mapped onto $w(r)$, hence generating $\dz_4$ in $\dih_4$, is equal to $r$ up to conjugation by an element of the torus.

\begin{lemma}
    Let $G$ be a toric subgroup of $\aut(S)$ such that $\rho^G(S)=1$. If $S$ is $G$ solid, then $G$ is conjugated by an element of $\dt$ to a subgroup of $N(\dt)$ containing $r$.
\end{lemma}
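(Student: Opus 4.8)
The plan is to reduce everything to the action of $G$ on $\pic(S)$ together with the explicit links recorded in Example \ref{2fixed} and Remark \ref{glinksp1p1}. Since $G$ is toric we may assume $G\subset N(\dt)$, and Lemma \ref{es1} then restricts to an exact sequence $1\to T\to G\to W\to 1$ with $T=G\cap\dt$ and $W\subset\dih_4$. First I would record how $\rho^G(S)=1$ constrains $W$. The $G$-action on $\pic(S)=\dz f_1\oplus\dz f_2$, where $f_1,f_2$ are the two ruling classes, factors through $W$, and an element of $\dih_4$ acts nontrivially precisely when it exchanges the two rulings. A direct check with the generators $r=(\frac1y,x)$ and $s=(y,x)$ shows that the ruling-exchanging elements of $\dih_4=\langle r,s\rangle$ are exactly $r,r^3$ and $s,r^2s$. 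Hence $\rho^G(S)=1$ forces $W$ to contain at least one of these four elements.

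Next I would use $G$-solidity to promote this to $\langle w(r)\rangle\cong\dz_4\subseteq W$. Equivalently, I must rule out the subgroups $W$ whose only ruling-exchanging elements are the reflections $s$ or $r^2s$, namely $W\in\{\langle s\rangle,\langle r^2s\rangle,\langle r^2,s\rangle\}$. The tool is that the four vertices of the square $C$ are the $\dt$-fixed points, so $G$ permutes them through $W$ alone. Each of the reflections $s$ and $r^2s$ fixes the two opposite vertices on its axis, and those two vertices are in general position in the sense of Example \ref{2fixed}. If $W=\langle s\rangle$ or $W=\langle r^2s\rangle$, both these vertices are $G$-fixed, so Example \ref{2fixed} gives a $G$-birational map to the conic bundle $\mathbb F_1$ and $S$ is not $G$-solid. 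If $W=\langle r^2,s\rangle$, the involution $r^2$ exchanges the two vertices lying on each diagonal, so they form a single $G$-orbit of length $2$ in general position; blowing it up is the $k=2$ link of Remark \ref{glinksp1p1}, which produces a del Pezzo surface of degree $6$ with a $G$-conic bundle structure, so again $S$ is not $G$-solid. Thus $G$-solidity forces $w(r)\in W$.

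Finally I would upgrade the inclusion $w(r)\in W$ to the normal form. Choose $g\in G$ with $w(g)=w(r)$; since $\ker w=\dt$ we may write $g=(\frac ay,bx)$ for some $a,b\in\tor$. Conjugating by a torus element $\theta=(px,qy)$ turns $g$ into $(\frac{apq}{y},\frac{bq}{p}x)$, and the equations $apq=1$, $bq=p$ are solvable for $p,q\in\tor$ because $a,b\neq 0$; for such $\theta$ one has $\theta g\theta^{-1}=r$. As $\theta\in\dt\subset N(\dt)$, the conjugate $\theta G\theta^{-1}$ is again a subgroup of $N(\dt)$, and it contains $r$, which is the assertion.

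The step I expect to be the real obstacle is the middle one, and within it the case $W=\langle r^2,s\rangle$, where $G$ fixes none of the vertices: one must identify the correct length-$2$ orbit and check it is in general position, so that the Sarkisov link of Remark \ref{glinksp1p1} genuinely applies and outputs a conic bundle. By contrast the Picard-lattice computation and the closing torus conjugation are routine; the only subtlety there is to keep track of general position, which is exactly what the links of Remark \ref{glinksp1p1} and Example \ref{2fixed} require.
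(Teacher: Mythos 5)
Your proposal is correct and takes essentially the same approach as the paper: both reduce to the image $W\subseteq\dih_4$ acting on the vertices of the square $C$, exclude every $W$ not containing $w(r)$ either via Example \ref{2fixed} (two fixed opposite vertices) or by blowing up a length-$2$ orbit of opposite vertices to reach a $G$-conic bundle, and conclude with the identical torus-conjugation computation. Your enumeration of the subgroups $\langle s\rangle$, $\langle r^2s\rangle$, $\langle r^2,s\rangle$ is just the subgroup-level restatement of the paper's case analysis by vertex-orbit lengths.
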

\begin{proof}
    Since $G$ is toric, we may assume that it is a subgroup of $N(\dt)$. The orbits of the vertices of the square $C$ under the action of $\dih_4$ are of length $1$, $2$, or $4$. If one of them is fixed, the opposite vertex must be fixed as well, and we get the situation of Example \ref{2fixed}. In particular, the surface $S$ is not $G$-solid. If no vertex is fixed, then either the vertices are on the same orbit, or they form two orbits of length $2$. Since $\rho^G(S)=1$, we cannot have two orbits of length $2$, each of them consisting of two consecutive vertices of $C$. So these two orbits must be formed by opposite vertices, and we deduce that there are two orbits of length $2$ in general position. We can blow-up one of them to get a $G$-link to a $G$-conic bundle. Hence, the only possibility for $S$ to be $G$-solid is that all the vertices are on the same orbit. In this case, since $\rho^G(S)=1$, the vertices must be cyclically permuted, so $w(r)\in W$. It remains to point out that if an element $g\in\aut(S)$ satisfies $w(g)=w(r)$, then $g$ is conjugated to $r$ in $N(\dt)$. Indeed, such automorphism $g$ is of the form $(\frac{a}{y},bx)$, for some $a,b\in\dc^*$. Let $t=(kx,ly)\in\dt$, with $k,l\in\dc^*$. If $l^2=\frac{1}{ab}$ and $k=bl$, then we have $tgt^{-1}=r$.
\end{proof}

\begin{lemma}
	Let $G$ be a subgroup of $N(\dt)$ containing $r$, and $T=G\cap\dt$. If $|T|>5$, then $S$ is $G$-rigid.
\end{lemma}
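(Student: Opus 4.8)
The plan is to reduce the claim to a statement about $G$-orbits and then exploit the torus action. By Remark \ref{glinksp1p1}, if a $G$-Mori fibre space $S'$ is not $G$-isomorphic to $S$, then any $G$-link $S\dashrightarrow S'$ is centered at an orbit of length $k\le 5$ all of whose points are pairwise in general position (in the sense of Example \ref{2fixed}). Hence it suffices to prove that $G$ has \emph{no} orbit of length at most $5$ whose points are in general position; this yields $G$-rigidity directly. First I would stratify $S$ into three $G$-invariant pieces: the open torus $\mathfrak T$, the four punctured lines of $C$, and the four vertices of $C$. These are $G$-invariant because $G\subset N(\dt)$ preserves $C$ (Remark \ref{postes1}), hence preserves its complement $\mathfrak T$ and its vertex set. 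Since $T=G\cap\dt$ is contained in $G$, every $G$-orbit is $T$-invariant and therefore lies in a single stratum.

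Next I would analyse each stratum. A point of $\mathfrak T$ has trivial $T$-stabiliser, so its $G$-orbit contains a free $T$-orbit and has length at least $|T|>5$; such orbits are too long to be link centers. The four vertices form one orbit of length $4$ on which $r$ acts as a $4$-cycle, but two adjacent vertices share a coordinate, so this orbit is not in general position. The crux is the remaining stratum, and here I would use that $r\in G$. A short computation shows that conjugation by $r=(\frac1y,x)$ acts on $\dt$ by $(a,b)\mapsto(b^{-1},a)$, an automorphism of order $4$. Since $r$ normalises $T$, the subgroup $T$ is invariant under this map, which forces $\mathrm{pr}_1(T)=\mathrm{pr}_2(T)=:A$. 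As $|T|>5$, the group $T$ is nontrivial, so $A$ is nontrivial and $|A|\ge 2$.

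Finally, a non-vertex point $P$ of $C$ lies on a single fibre of $\pi_1$ or of $\pi_2$; say $P=(0,y_0)$ lies on $\pi_1^{-1}(1:0)$. Its $T$-orbit is $\{(0,by_0):b\in A\}$, which consists of $|A|\ge 2$ points all in that same fibre, so any two of them are not in general position. As this $T$-orbit is contained in the $G$-orbit of $P$, that $G$-orbit is never in general position; by the $r$-symmetry of the four lines the same holds for every non-vertex point of $C$. Combining the three strata, no $G$-orbit of length at most $5$ is in general position, and hence $S$ is $G$-rigid.

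The main obstacle I anticipate is the degenerate case in which $T$ projects trivially onto one factor (for instance $T=\langle(\zeta,1)\rangle$), where the $T$-orbits on one pair of lines would be points and the collinearity argument would break down. This is exactly what the hypothesis $r\in G$ excludes: the order-$4$ conjugation symmetry equates the two projections and forces both to be nontrivial, so the argument applies uniformly to all four lines. A secondary point needing care is the bookkeeping of the general-position condition of Remark \ref{glinksp1p1}, in particular verifying that the vertex orbit is ruled out on those grounds rather than by its length.
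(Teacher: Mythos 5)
Your proof is correct and takes essentially the same route as the paper's: orbits of points outside the square $C$ have length at least $|T|>5$, while every orbit meeting $C$ fails the pairwise general-position requirement of Remark \ref{glinksp1p1}, so no link to a non-$G$-isomorphic surface can occur. You merely make explicit the details the paper leaves implicit, namely that conjugation by $r$ forces both projections of $T$ to be nontrivial (so any non-vertex orbit on $C$ contains two points in a common fibre) and the separate check that the vertex orbit of length $4$ contains adjacent vertices sharing a coordinate.
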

\begin{proof}
	The $G$-orbits of the points outside of the square $C$ are of length at least $T$. According to Remark \ref{glinksp1p1}, there is no link centered in such orbit leading to a non-$G$-isomorphic surface. Moreover, the orbit of a point lying in the square $C$ is not in general position.
\end{proof}

We will now start the exhaustive study of subgroups $G$ of $N(\dt)$ such that $\rho^G(S)=1$, where $|G\cap\dt|\le5$ and $r\in\dt$. Let $G$ be such subgroup of $\aut(S)$. First, we can take off the cases where $G\cong\dz_3$ or $G\cong\dz_4$.

\begin{lemma}
    Let $G$ be a subgroup of $N(\dt)$ containing $r$. Then the toric part $T=G\cap\dt$ cannot be isomorphic to $\dz_3$ or to $\dz_4$.
\end{lemma}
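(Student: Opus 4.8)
The plan is to use that $\dt$ is normal in $N(\dt)$, so that $T=G\cap\dt$ is normal in $G$; in particular, since $r\in G$, conjugation by $r$ restricts to an automorphism of $T$. First I would make this conjugation explicit. Writing $t_{a,b}=(ax,by)\in\dt$ and using $r=(\frac1y,x)$, whose inverse is $r^{-1}=(y,\frac1x)$, a direct computation gives
\[
r\,t_{a,b}\,r^{-1}=\left(\tfrac1b\,x,\,ay\right),
\]
so conjugation by $r$ acts on the parameters by $(a,b)\mapsto(b^{-1},a)$. Restricting to the $n$-torsion $\dt[n]\cong(\dz/n\dz)^2$ and identifying $(\mu_n^{i},\mu_n^{j})$ with $(i,j)$, this is the linear map given by the matrix $R=\left(\begin{smallmatrix}0&-1\\1&0\end{smallmatrix}\right)$, which satisfies $R^2=-\mathrm{Id}$ and has characteristic polynomial $\lambda^2+1$. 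Since $T$ is normalized by $r$, the corresponding subgroup of the torsion must be $R$-invariant, and the whole argument will reduce to showing that no invariant cyclic subgroup of order $3$ or $4$ can occur.

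Then I would treat the two cases arithmetically. If $T\cong\dz_3$, then $T$ is a line in $\dt[3]\cong\mathbb F_3^2$, and $r$-invariance forces it to be an eigenline of $R$ over $\mathbb F_3$. But $\lambda^2+1$ has no root in $\mathbb F_3$, since $-1$ is not a square modulo $3$; hence $R$ has no eigenline and no such $T$ exists. If $T\cong\dz_4$, I would pick a generator $v$ of order $4$ in $\dt[4]\cong(\dz/4\dz)^2$; then $r$-invariance gives $Rv=kv$ for some $k\in\dz/4\dz$, and applying $R$ once more together with $R^2=-\mathrm{Id}$ yields $(k^2+1)v=0$. As $v$ has order $4$, one of its coordinates is a unit modulo $4$, forcing $k^2+1\equiv0\pmod4$, which is impossible because $-1$ is not a square modulo $4$. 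This rules out both $\dz_3$ and $\dz_4$.

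There is no serious obstacle here: the content reduces entirely to the two elementary congruence obstructions above, both coming from the fact that $R$ represents an order-$4$ rotation with $R^2=-\mathrm{Id}$, so that an $R$-invariant cyclic subgroup would require a square root of $-1$ modulo $3$ or modulo $4$. The only points demanding care are getting the conjugation formula right — in particular the inversion $b\mapsto b^{-1}$ produced by the coordinate swap in $r$ — and phrasing "normalized by $r$" correctly as "$R$-invariant subgroup of the torsion", which is exactly what links the group-theoretic hypothesis to these arithmetic non-existence statements.
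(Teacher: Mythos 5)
Your proof is correct and follows essentially the same route as the paper: both exploit that $T=G\cap\dt$ is normalized by $r$, compute the conjugation action $r\,t_{a,b}\,r^{-1}=(b^{-1}x,\,ay)$ explicitly, and reduce to the impossibility of $k^2\equiv-1$ modulo $3$ and modulo $4$. Your torsion-lattice phrasing with $R^2=-\mathrm{Id}$ is a cleaner repackaging of the paper's generator manipulation (and, as a bonus, makes transparent why $T\cong\dz_5$ does occur later, since $-1$ is a square modulo $5$).
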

\begin{proof}
    Assume that $T$ is isomorphic to $\dz_3$ (resp. $\dz_4$). Then $T$ is generated by an automorphism $t\in\dt$ of the form $(\mu_nx,\mu_n^ky)$ or $(\mu_n^kx,\mu_ny)$, where $n=3$ (resp. $4$), and $\mu_n$ is a primitive $n$-th root of the unity. But by Lemma \ref{conjr}, we may assume that $G$ contains the element $r=(\frac{1}{y},x)$, acting on $T$ by conjugation. Since $(r(\mu_n^kx,\mu_ny)r^{-1})^{-1}=(\mu_nx,\mu_n^{-k}y)$, we may assume that $t$ is of the first form, namely we have $T=\left<(\mu_nx,\mu_n^ky)\right>$. The automorphism $rtr^{-1}=(\mu_n^{-k}x,\mu_ny)\in T$ must belong to $T$, so must be a power $(\mu_nx,\mu_n^ky)$. It is impossible for $n=3$ or $n=4$.
\end{proof}

\begin{proposition}
    If $T$ is trivial, then $S$ is not $G$-solid. The options for $G$ are the following.
    \begin{itemize}
        \item A group isomorphic to $\dz_4$, generated by $r=(\frac{1}{y},x)$,
        \item A group isomorphic to $\dih_4$, generated by $r$ and $s=(y,x)$,
        \item A group isomorphic to $\dih_4$, generated by $r$ and $(-y,-x)$.
    \end{itemize}
\end{proposition}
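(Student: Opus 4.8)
The plan is to exploit the injectivity of $w|_G$ forced by the hypothesis, which reduces the classification to an elementary computation inside $\dih_4$, and then to read off the non-solidity directly from the fixed locus of $r$. Since $T=G\cap\dt=\ker(w|_G)$, triviality of $T$ means that the restriction of the map $w$ from Lemma \ref{es1} embeds $G$ into $\dih_4$. Because $r\in G$ and $w(r)$ has order $4$ (indeed $r^2=(\tfrac1x,\tfrac1y)\notin\dt$, so $w(r^2)\ne 1$, while $r^4=\id$), the image $W=w(G)$ contains $\langle w(r)\rangle\cong\dz_4$. Hence either $W=\dz_4$ or $W=\dih_4$, giving exactly two cases to analyse.

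In the first case $w|_G$ is an isomorphism onto $\dz_4=\langle w(r)\rangle$, so $G=\langle r\rangle$, the first option. In the second case $G\cong\dih_4$, and there is a unique $g\in G$ with $w(g)=w(s)$; any lift of $w(s)$ to $N(\dt)$ has the form $g=(ay,bx)$ with $a,b\in\dc^*$. Triviality of $T$ forces $g^2=\id$, and since $g^2=(ab\,x,ab\,y)$ this gives $ab=1$. Likewise, the dihedral relation $w(g)w(r)w(g)^{-1}=w(r)^{-1}$ in the image forces $grg^{-1}r\in\ker(w|_G)=T=\{1\}$, hence $grg^{-1}=r^{-1}$; a direct computation gives $grg^{-1}=(a^2y,\tfrac1x)$ while $r^{-1}=(y,\tfrac1x)$, so $a^2=1$. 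Thus $a=b=1$, giving $g=s$, or $a=b=-1$, giving $g=(-y,-x)$, which are the remaining two options. This pinning-down of the admissible lifts is the technical heart of the statement, and it is the step I would expect to be the main (though routine) obstacle: the exhaustiveness of the argument hinges on the two relations above being \emph{forced}, not merely assumed.

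For the non-solidity, I would first compute the fixed points of $r$: solving $(\tfrac1y,x)=(x,y)$ yields exactly $(1,1)$ and $(-1,-1)$, and since $r$ cyclically permutes the four boundary curves of $C$ there are no fixed points there. These two points lie in general position in the sense of Example \ref{2fixed}. For $G=\langle r\rangle$ and for $G=\langle r,s\rangle$ the element $s=(y,x)$ fixes both points as well, so $G$ fixes two points in general position, and Example \ref{2fixed} produces a $G$-birational map to the $G$-conic bundle $\mathbb F_1$; hence $S$ is not $G$-solid. For $G=\langle r,(-y,-x)\rangle$ the element $(-y,-x)$ interchanges $(1,1)$ and $(-1,-1)$, so $G$ fixes no point, but $\{(1,1),(-1,-1)\}$ is a $G$-orbit of length $2$ in general position; blowing it up is the $k=2$ link of Remark \ref{glinksp1p1}, landing on a $G$-conic bundle $Z_6\to\pl$, so again $S$ is not $G$-solid. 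The only remaining care needed is organisational: verifying that the two candidate points are in general position and that in the third case they genuinely form a single orbit rather than two fixed points, after which the earlier results do all the work.
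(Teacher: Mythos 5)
Your proof is correct and takes essentially the same route as the paper: injectivity of $w|_G$ reduces the classification to $w(G)=\dz_4$ or $\dih_4$, the lift $h=(ay,bx)$ of $w(s)$ is pinned down to $s$ or $(-y,-x)$ by the forced relations $h^2=\id$ and $hrh^{-1}=r^{-1}$, and non-solidity comes from the fixed points $(1,1)$, $(-1,-1)$ via Example \ref{2fixed} or from the length-$2$ orbit via the $k=2$ link of Remark \ref{glinksp1p1}. The only (immaterial) difference is that you treat $G=\langle r\rangle$ directly through the fixed locus of $r$, whereas the paper disposes of it by observing $\langle r\rangle\subset\langle r,s\rangle$.
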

\begin{proof}
	The toric part $T$ is trivial, so the map $w$ restricts to an isomorphism on $G$. Since $r\in G$, we either have $G\cong\dz_4$, or $G\cong \dih_4$. Assume we are in the latter case. Then $G$ is generated by $r$, and an element $h$ such that $w(h)=w(s)$, or in other words such that $h=ts$, for some $t=(ax,by)\in\dt\cong(\dc^*)^2$. Since $w$ restricts to an isomorphism on $G$, we can write $hrh=w^{-1}(srs)=w^{-1}(r^{-1})=r^{-1}$. It yields $r^{-1}=(a^2y,\frac{1}{x})$, so that $a^2=1$. Finally, since $h$ is of order $2$, we get $ab=1$. So the only two options for $h$ are $h=s$, or $h=(-y,-x)$. If $G$ is generated by $r$ and $s$, the points $(1,1)$ and $(-1,-1)$ are fixed by the action, in which case $S$ is not $G$-solid, as in example \ref{2fixed}. If $G$ is generated by $r$ and $(-y,-x)$, the points $(1,1)$ and $(-1,-1)$ form an orbit of length $2$. We can $G$-equivariantly blow-up these points and obtain a $G$-link to a $G$-conic bundle. Finally, if $\dz_4\cong G=\left<r\right>$, then $G$ is a subgroup of $\left<r,s\right>$, so that the surface $S$ is not $G$-solid.
\end{proof}

\begin{proposition}
    If $T\cong \dz_2$, there are the following possibilities and only them:
    \begin{itemize}
        \item $G=\left<t,r\right>\cong\dz_4\times\dz_2$, with $t=(-x,-y)$. The surface $S$ is not $G$-solid.
        \item $G=\left<t,r,s\right>\cong\dz_2\times \dih_4$. The surface $S$ is not $G$-solid.
        \item $G=\left<t,r,h\right>\cong\dz_2^2\rtimes\dz_4$, with $t$ and $g$ as above, and $h=(-y,x)$.The surface $S$ is $G$-solid.
    \end{itemize}
\end{proposition}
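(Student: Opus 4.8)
The plan is to first classify the groups $G$ satisfying the hypotheses using the restricted exact sequence $1\to T\to G\to W\to 1$ with $W\le\dih_4$, and then to settle $G$-solidity for each, the genuinely hard case being the last one. Since $r\in G$, its image $w(r)$ generates a $\dz_4$ inside $\dih_4$, so $W$ is either $\dz_4$ or $\dih_4$. I would first pin down $T$: a direct computation shows that conjugation by $r$ carries a torus element $(ax,by)$ to $(b^{-1}x,ay)$, and applying this to the three involutions $(-x,y)$, $(x,-y)$, $(-x,-y)$ of $\dt$ shows that only the last is $r$-invariant. As $T\cong\dz_2$ must be normalised by $r$, this forces $t=(-x,-y)$.

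If $W=\dz_4$ then $G=\left<t,r\right>$, and since $r^2\notin\dt$ while $t\in\dt$ this is $\dz_4\times\dz_2$. If $W=\dih_4$, I would pick a lift $h$ of the reflection $w(s)$; such an element has the form $h=(\alpha y,\beta x)$ for some $\alpha,\beta\in\tor$. Requiring $G\cap\dt=T$ forces both $h^2$ and $hrh^{-1}r$ to lie in $T$, and a short computation turns this into $\alpha=\beta=\pm1$ or $\alpha=-\beta=\pm1$. Up to multiplying $h$ by $t$, this leaves exactly $h=s$, giving $\left<t,r,s\right>\cong\dz_2\times\dih_4$, and $h=(-y,x)$, giving the order-$16$ group of the third bullet (here $h^2=t$, so $G=\left<r,h\right>$). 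This produces the three groups and no others.

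For the first two cases I would exhibit a link to a conic bundle. Here $r$ fixes the two points $(1,1)$ and $(-1,-1)$, $t$ interchanges them, and $s$ fixes both, so $\{(1,1),(-1,-1)\}$ is a $G$-orbit of length $2$ in general position. The length-$2$ link of Remark \ref{glinksp1p1} then blows it up to a $G$-conic bundle, so $S$ is not $G$-solid.

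The third case is the main obstacle. I would prove $S$ is $G$-solid by showing it is $G$-rigid via Remark \ref{glinksp1p1}, that is, that $G$ has no orbit of length $\le 5$ in general position. The decisive simplification is that $|G|=16$, so every orbit length is a power of $2$; in particular lengths $3$ and $5$ cannot occur, and only lengths $1,2,4$ remain. There is no fixed point, since $r$ (fixing $(\pm1,\pm1)$ with $x=y$) and $h$ (fixing the vertices $(0,0),(\infty,\infty)$) have no common fixed point. There is no orbit of length $2$ either: such an orbit has a stabiliser of index $2$, and one checks that each of the three index-$2$ subgroups — in suitable generators $\left<t,r\right>$, $\left<h,r^2\right>$ and $\left<rh,r^2,t\right>$ — has empty fixed locus on $S$. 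Finally, a point with an orbit of length $4$ has a stabiliser of order $4$, hence is fixed by an order-$4$ subgroup; running through the finitely many order-$4$ subgroups shows that the only such points off the boundary square $C$ are $(\pm1,\pm1)$ and $(\pm i,\pm i)$, forming two orbits of length $4$, neither in general position (each contains two points sharing a ruling), while any orbit meeting $C$ is automatically not in general position. Thus $G$ has no orbit of length $\le 5$ in general position, so $S$ is $G$-rigid and hence $G$-solid. The crux is precisely this orbit analysis: the work is finite but relies on the order constraint $|G|=16$ to eliminate odd orbit lengths and on the explicit fixed loci of the order-$2$ and order-$4$ subgroups to dispose of lengths $2$ and $4$.
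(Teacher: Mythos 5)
Your proof is correct and follows essentially the same route as the paper: the same conjugation computations pin down $t=(-x,-y)$ and the two admissible lifts $h=s$ and $h=(-y,x)$, and the same length-two orbit $\{(1,1),(-1,-1)\}$ produces the link to a conic bundle in the two non-solid cases. The only place you go beyond the paper is the solid case, where the paper simply asserts that there is no $G$-orbit of length $l\le5$; your stabiliser-by-stabiliser analysis actually establishes the (slightly corrected) fact that the length-$4$ orbits $\{(\pm1,\pm1)\}$ and $\{(\pm i,\pm i)\}$ do exist but fail to be in general position, which is exactly the hypothesis Remark \ref{glinksp1p1} needs, so your version is a welcome tightening rather than a different method.
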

\begin{proof}
Assume that $w(G)=\dih_4$. Then $G$ is generated by $r$, an element $t\in T$, and an element $h$ such that $w(h)=(ay,bx)$, for some $a,b\in\dc^*$. Since $T$ is of order $2$, the element $t$ is either $(-x,-y),(-x,y)$, or $(x,-y)$. But the relation $rtr^{-1}=t$ implies that the only possibility is $t=(-x,-y)$. The order of $h$ can only be $2$ or $4$. If $\ord(h)=2$, then $a=b^{-1}$. Since $w(hrh)=w(r^{-1})$, we have $hrh=\tau r^{-1}=\tau\cdot(y,\frac{1}{x})$, for some $\tau\in T$, i.e. $\tau=\id$ or $\tau=t$. But $hrh=\tau\cdot(a^2y,\frac{1}{x})$, which implies $a=b=\pm1$. But since $t\in G$, we may assume that $a=b=1$. The points $P_1=(1:1)\times(1:1)$ and $P_2=(1:-1)\times(1:-1)$ form an orbit of length $2$, and blowing it up gives a $G$-link to a $G$-conic bundle.

Assume $\ord(h)=4$. Since $h^2=(abx,aby)$, this implies that $a^2b^2=1$ and $ab\ne1$, so that $ab=-1$ and $h=(ax,-\frac{y}{a})$. Using again the fact that $r^{-1}=\tau hrh^{-1}=\tau\cdot(a^2y,\frac{1}{x})$, for some $\tau\in T$, we deduce that $h=(-y,x)$ or $h=(y,-x)$. Since these possibilities only differ by $t$, we may assume that $h=(-y,x)$. The group $G$ is generated by $t$, $r$, and $h$, and is isomorphic to $G\cong\dz_2^2\rtimes\dz_4$. There is no $G$-orbit of length $l\le5$, hence $S$ is $G$-solid.

Finally, if $w(G)=\dz_4$, then $\dz_2\times\dz_4\cong G=\left<t,r\right>\subset\left<t,r,s\right>$, so that the surface $S$ is not $G$-solid, as in the case where $G=\left<t,r,s\right>.$
\end{proof}

\begin{proposition}
    If $T\cong \dz_2^2$, then $S$ is $G$-solid. Moreover, there are the following possibilities for $G$, and only them:
    \begin{itemize}
        \item $G\cong\dz_2^2\rtimes\dz_4$, generated by $r$ and $t=(-x,y)$,
        \item $G\cong\dz_2^4\rtimes\dz_2$, generated by $r$, $t=(-x,y)$, and $s=(y,x)$,\footnote{This semidirect product is the group of GAP ID $(32,27)$.}
        \item $G\cong\dz_2^3\rtimes\dz_4$, generated by $r$, $t=(-x,y)$, and $h=(iy,ix)$.\footnote{This semidirect product is the group of GAP ID $(32,6)$.}
    \end{itemize}
\end{proposition}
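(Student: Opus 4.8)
The plan is to follow the same two-part structure used in the previous propositions: first determine all possible groups $G$ with $r\in G$ and $T=G\cap\dt\cong\dz_2^2$, and then verify $G$-solidity for each. The starting observation is that $T\cong\dz_2^2$ means $T$ is the full $2$-torsion subgroup of $\dt$, namely $T=\left<(-x,y),(x,-y)\right>$, which is automatically normalized by $r$ (since conjugation by $r$ permutes the two generators). This removes the obstruction that killed the $\dz_3$ and $\dz_4$ cases in Lemma 4.12. So the analysis will split according to the image $W=w(G)\subset\dih_4$, which contains $w(r)$ and hence is either $\dz_4$ or all of $\dih_4$.

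For the $W\cong\dz_4$ case, I expect $G=\left<t,r\right>$ with $t=(-x,y)$ (noting $(x,-y)=rtr^{-1}\cdot t^{-1}$ or a similar relation lands inside the group generated by $t$ and its $r$-conjugate), giving $G\cong\dz_2^2\rtimes\dz_4$. For the $W\cong\dih_4$ case, $G$ is generated by $r$, the torus part $T$, and one extra element $h$ with $w(h)=w(s)$, so $h=ts$ for some $t=(ax,by)\in\dt$. As in the previous proposition, I would use the relations forced by $w$ restricting compatibly with the $\dih_4$ structure—specifically $w(hrh)=w(r^{-1})$ and the order of $h$—to pin down $(a,b)$ up to multiplication by elements of $T$. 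The order of $h$ can be $2$ or $4$; the order-$2$ computation should yield (up to $T$) $h=s=(y,x)$, giving $G\cong\dz_2^4\rtimes\dz_2$, while the order-$4$ computation should force $ab=-1$ and land (up to $T$) on $h=(iy,ix)$, giving $G\cong\dz_2^3\rtimes\dz_4$. The two claimed GAP IDs $(32,27)$ and $(32,6)$ provide a useful sanity check on these identifications.

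To establish $G$-solidity, by Remark \ref{glinksp1p1} it suffices to show that none of these groups has a $G$-orbit of length $k\le 5$ whose blow-up yields a link to a non-$G$-isomorphic surface. Since $|T|=4$, every orbit of a point lying outside the square $C$ has length at least $4$, and by the remark I must also check orbits of length $4$ and $5$; but a genuine obstruction only arises from orbits in general position, so the key point is to rule out $G$-fixed points and small orbits positioned on or compatibly with $C$. Concretely, the full $2$-torsion $T\cong\dz_2^2$ already forces any $T$-orbit off the coordinate curves to have length $4$, and the additional presence of $r$ (cyclically permuting the sides of $C$) prevents these length-$4$ orbits from being in general position in the requisite sense, so no admissible link exists.

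The main obstacle I anticipate is the $W\cong\dih_4$ bookkeeping: correctly tracking $h$ modulo the now larger torus part $T\cong\dz_2^2$ (rather than $\dz_2$ as before) means several candidate values of $(a,b)$ collapse to the same group, and one must argue carefully that the order-$2$ and order-$4$ branches give exactly the two listed groups and no others. The solidity verification itself should be routine once the orbit-length lower bound from $|T|=4$ is combined with Remark \ref{glinksp1p1}, but I would double-check the borderline length-$4$ and length-$5$ orbits explicitly to confirm they are never in general position.
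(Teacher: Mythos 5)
Your proposal is correct and follows essentially the same route as the paper: the same case split on $w(G)\in\{\dz_4,\dih_4\}$ (with $T=\left<(-x,y),(x,-y)\right>$ automatically normalized by $r$), the same pinning down of $h=ts$ via its order and the relation forced by $w(hrh^{-1})=w(r^{-1})$ modulo $T$ (the order-$2$ and order-$4$ branches indeed collapse, each yielding both $\left<t,r,s\right>$ and $\left<t,r,(iy,ix)\right>$), and solidity via the orbit-length constraints of Remark \ref{glinksp1p1}. One small correction to your heuristic, which is otherwise \emph{more} careful than the paper's proof (the paper asserts there is no $G$-orbit of length $l\le 5$, yet $\{(\pm1,\pm1)\}$ is a $G$-orbit of length $4$ for $G=\left<r,t\right>$): the failure of general position for length-$4$ orbits is not due to $r$ permuting the sides of $C$, but to the $T$-orbit structure itself — such an orbit is a single $T$-orbit $\{(\pm a,\pm b)\}$, and $(a,b)$, $(-a,b)$ lie on a common ruling fibre, so no link of Remark \ref{glinksp1p1} can be centered there.
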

\begin{proof}
    Assume that $w(G)\cong\dz_4$, so that $G=\left<T,r\right>$. Since $T\subset(\dc^*)^2$ is isomorphic to $\dz_2^2$, it is generated by $t_1=(-x,y)$ and $t_2=(x,-y)$. The group $G$ is isomorphic to $\dz_2^2\rtimes\dz_4$. Notice that since $rt_1r^{-1}=t_2$, the group $G$ is generated by $r$ and $t_1$. There is no $G$-orbit of length $l\le5$. Hence, the surface $S$ is $G$-solid. If $w(G)=\dih_4$, the group $G$ contains $\left<r,t_1\right>$. Hence the surface $S$ is also $G$-solid.

    Let us complete the list of groups whose toric part is isomorphic to $\dz_2^2$. Assume that $w(G)=\dih_4$, so that there exists an element $h\in G$ such that $w(h)=w(s)$. The order of $h$ is either $2$ or $4$. If $\ord(h)=2$, we get $h=(ay,\frac{x}{a})$. But $r^{-1}=\tau hrh^{-1}=\tau\cdot(a^2y,\frac{1}{x})$, so $\tau=\mathrm{id}$ and $a=\pm1$, or $\tau=t_1=(-x,y)$ and $a=\pm i$. Up to composition by an element of $T$, we get $h=s=(y,x)$ or $h=(-ix,iy)$. In the first case, we have that $G=\left<t_1,t,h\right>$ is isomorphic to a semidirect product of the form $\dz_2^3\rtimes\dz_4$, and in the second case to a semidirect product of the form $\dz_2^3\rtimes\dz_4$. If $\ord(h)=4$, we get $h=(ay,-\frac{x}{a})$, which only differs from the previous case by a an element of $T$, so $G$ is still either $\left<t_1,r,s\right>$, or $\left<t_1,r,(-ix,iy)\right>$.
\end{proof}

The only remaining possibility is $T\cong\dz_5$. We will use the results of \cite{wolt} about the $G$-solidity of the del Pezzo surface of degree $5$.

\begin{proposition}
    If $T\cong\dz_5$, then $G$ is isomorphic to the Fröbenius group $F_5\cong\dz_5\rtimes\dz_4$, generated by $r$ and $(\mu_5x,\mu_5^ly)$ for $l=2$ or $l=3$, where $\mu_5$ is a primitive fifth root of the unity.
\end{proposition}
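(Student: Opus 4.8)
The plan is to reduce the entire statement to a short eigenvector computation over $\dz/5\dz$, by exploiting the action of $W=w(G)$ on the $5$-torsion of the torus $\dt$.

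First I would fix a generator $t=(\mu_5^a x,\mu_5^b y)$ of $T\cong\dz_5$ and record it as the nonzero vector $(a,b)\in(\dz/5\dz)^2$; indeed the $5$-torsion points of $\dt\cong\tort$ form a copy of $(\dz/5\dz)^2$ on which $N(\dt)/\dt\cong\dih_4$ acts. Since $\dt\triangleleft N(\dt)$ and $T=G\cap\dt$, the subgroup $T$ is normal in $G$, so conjugation by any $g\in G$ preserves the line $\langle(a,b)\rangle$. As $\dt$ is abelian this conjugation action factors through $w(g)\in\dih_4$, which acts through the standard representation: the computation $rtr^{-1}=(\mu_5^{-b}x,\mu_5^a y)$ already used for the cases $T\cong\dz_3,\dz_4$ shows that $w(r)$ acts as the rotation $\left(\begin{smallmatrix}0&-1\\1&0\end{smallmatrix}\right)$, while $w(s)$ acts as the transposition $\left(\begin{smallmatrix}0&1\\1&0\end{smallmatrix}\right)$.

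Because $r\in G$, the line $\langle(a,b)\rangle$ must be invariant under the rotation, i.e. $(a,b)$ is an eigenvector of $\left(\begin{smallmatrix}0&-1\\1&0\end{smallmatrix}\right)$ over $\dz/5\dz$. Its eigenvalues solve $\lambda^2+1\equiv0\pmod 5$, so $\lambda\in\{2,3\}$; these exist precisely because $-1$ is a square modulo $5$, which is the arithmetic fact distinguishing $T\cong\dz_5$ from the excluded cases $\dz_3$ and $\dz_4$. Rescaling $t$ by a suitable power normalizes the eigenvector to $(1,l)$, from which one reads off $l=2$ or $l=3$, yielding the stated generator $t=(\mu_5 x,\mu_5^l y)$; the same matrix computation shows $r$ acts on $T$ as an order-$4$ automorphism.

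It remains to exclude $W=\dih_4$ and to identify the abstract group, and this is the only genuinely substantive step. Since $W\supseteq\langle w(r)\rangle\cong\dz_4$, the alternative $W=\dih_4$ would produce an element $h\in G$ with $w(h)=w(s)$; such $h$ would have to normalize $T$, forcing $(1,l)$ to be an eigenvector of the transposition $\left(\begin{smallmatrix}0&1\\1&0\end{smallmatrix}\right)$, whose eigenvectors are $(1,\pm1)$. As $l\not\equiv\pm1\pmod 5$ this is impossible, so $W=\dz_4$ and $|G|=|T|\,|W|=20$. Finally $w$ is injective on $\langle r\rangle$, so $\langle r\rangle\cap T=\{\id\}$ and $r$ has order $4$; hence $G=T\rtimes\langle r\rangle$ with $\langle r\rangle$ acting faithfully on $T\cong\dz_5$, which is exactly the Frobenius group $F_5\cong\dz_5\rtimes\dz_4$. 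The main obstacle to be careful about is precisely the exclusion of the reflection: one must check that no element lying over $w(s)$ can normalize a $\dz_5$ whose generator has unequal exponent ratio, and everything else is the eigenvector bookkeeping above.
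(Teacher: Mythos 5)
Your proof is correct and takes essentially the same route as the paper's: your eigenvector analysis of the $\dih_4$-action on the $5$-torsion $(\dz/5\dz)^2$ is a linear-algebra repackaging of the paper's explicit conjugation computations (normalizing $t$ via $r$, requiring $rtr^{-1}\in\left<t\right>$ to force $l=2$ or $l=3$, and showing $hth^{-1}\notin T$ for any $h$ lying over $w(s)$ to exclude $W=\dih_4$). If anything, your final identification $G=T\rtimes\left<r\right>\cong F_5$ via $|G|=20$ and the injectivity of $w$ on $\left<r\right>$ is spelled out more carefully than in the paper, which simply asserts the isomorphism.
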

\begin{proof}
    Assume $T\cong\dz_5$. It is generated by an element $t$ of the form $t=(\mu_5x,\mu_5^ly)$ or $t=(\mu_5^kx,\mu_5y)$. Since $r(\mu_5^kx,\mu_5y)^{-1}r^{-1}=(\mu_5x,\mu_5^{-k}y)$, we may assume that $t=(\mu_5x,\mu_5^ly)$. But then $rtr^{-1}=(\mu_5x^{-l},\mu_5y)$, and this element is in $\left<t\right>$ if and only if $l=2$ or $l=3$. Assume that there is an element $h\in G$ such that $w(h)=w(s)$. Such element is of the form $(ay,bx)$, for some $a,b\in\dc^*$. But $hth^{-1}=(\mu_5^lx,\mu_5y)$, which is not in $T$. Since this subgroup is normal in $G$, we get a contradiction. Hence, $G$ is generated by $(\mu_5x,\mu_5^ly)$ and $r$, with $l=2$ or $l=3$. In both cases, $G$ is isomorphic to the Fröbenius group $F_5\cong\dz_5\rtimes\dz_4$. The points outside of $C$ have orbits of length at least $5$, so that the only possible $G$-Sarkisov does not lead to a $G$-isomorphic surface leads to a del Pezzo surface of degree $5$ with invariant Picard rank $1$. This surface is $G$-solid, according to \cite{wolt}.
\end{proof}

\subsection{Non-toric subgroups of $\aut(\pl\times\pl)$}

\begin{proposition}\label{sgpsautp1p1}
    Let $G$ be a subgroup of $\aut(S)$ such that $\rho^G(S)=1$. Up to conjugation of $G$ in $\aut(S)$, there is an exact sequence of the form
    \[\begin{tikzcd}
    	1 & {H\times_DH} & {G} & {\dz_2} & {1.}
    	\arrow[from=1-1, to=1-2]
    	\arrow[from=1-2, to=1-3]
    	\arrow["\delta", from=1-3, to=1-4]
    	\arrow[from=1-4, to=1-5]
    \end{tikzcd}\]\label{es2}
where $H$ is the projection of $G$ onto the first and second $\pgl_2(\dc)$ factor in $(\pgl_2(\dc)\times\pgl_2(\dc))\rtimes\dz_2$.
\end{proposition}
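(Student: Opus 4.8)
The plan is to read the exact sequence off the action of $G$ on $\pic(S)$, and then to identify the resulting kernel by means of Goursat's lemma. First I would recall that $\pic(S)\cong\dz^2$ is generated by the classes $f_1,f_2$ of the two rulings, and that the subgroup $\pgl_2(\dc)\times\pgl_2(\dc)$ preserves each ruling, hence acts trivially on $\pic(S)$, while the generator of the $\dz_2$ factor exchanges $f_1$ and $f_2$. Consequently the action of $\aut(S)$ on $\pic(S)$ factors through the projection $\delta\colon\aut(S)\to\dz_2$, the nontrivial element acting by the transposition of $f_1$ and $f_2$. The invariants of this transposition have rank $1$, spanned by $f_1+f_2$, whereas the trivial action has invariants of rank $2$. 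Therefore $\rho^G(S)=1$ if and only if $\delta|_G$ is surjective, which at once gives the sought exact sequence, with $\dz_2$ on the right and kernel $G_0:=G\cap(\pgl_2(\dc)\times\pgl_2(\dc))$, a normal subgroup of index $2$.

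It then remains to put $G_0$ in the form $H\times_D H$. I would write $H_1$ and $H_2$ for the images of $G_0$ under the two projections $p_1,p_2\colon\pgl_2(\dc)\times\pgl_2(\dc)\to\pgl_2(\dc)$, so that $G_0$ is a subdirect product of $H_1\times H_2$. Choosing any element $g\in G\setminus G_0$, which swaps the two factors, and using that $G_0$ is normal in $G$, conjugation by $g$ sends $(u,v)\in G_0$ to $(ava^{-1},\,bub^{-1})$ for fixed $a,b\in\pgl_2(\dc)$; comparing projections gives $aH_2a^{-1}=H_1$, so $H_1$ and $H_2$ are conjugate in $\pgl_2(\dc)$. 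Conjugating $G$ by $(a^{-1},\id)\in\aut(S)$ — which is permitted, since the statement is up to conjugation — I may therefore assume $H_1=H_2=:H$, with both projections of $G_0$ surjecting onto $H$. Goursat's lemma now identifies $G_0$ with the fiber product $H\times_D H$ over a common quotient $D=H/N_1\cong H/N_2$, which is exactly the claimed description of the kernel.

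The genuinely delicate points are the bookkeeping behind equalizing the two projections and reconciling it with the statement's notation. One must verify that the element $a$ realizing $H_1=aH_2a^{-1}$ is the first component of an honest swap element of $G$, that $(a^{-1},\id)$ indeed lies in $\aut(S)$, and that conjugating by it replaces $G$ by a conjugate while turning $G_0$ into a subdirect product with equal projections. Strictly speaking $H$ is then the common image of $G_0$ under the two projections; to match the phrase \emph{projection of $G$} one checks that after the normalization a swap element may be written $(\id,c)\tau$ with $c$ normalizing $H$, so that the presentation $G_0=H\times_D H$ is symmetric and compatible with the outer $\dz_2$. I expect the main obstacle to be exactly this compatibility: confirming that the Goursat data $(N_1,N_2,\psi)$ defining the fiber product is consistent with the extra symmetry forced by the swap, rather than any single hard computation.
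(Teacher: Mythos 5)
Your proposal is correct and follows essentially the same route as the paper's proof: surjectivity of $\delta|_G$ from $\rho^G(S)=1$, Goursat's lemma applied to $\ker\delta|_G\subset\pgl_2(\dc)\times\pgl_2(\dc)$, then normality of the kernel together with a swap element $(a,b,1)$ to get $aH_2a^{-1}=H_1$, and conjugation by an element of the form $(a^{\pm1},\id,0)$ to equalize the two projections. The only differences are cosmetic: you spell out the Picard-group justification for the surjectivity of $\delta|_G$ (which the paper asserts in one line), while the paper additionally records the effect of the conjugation on the Goursat data, writing the new fibre product as $H\times_{D,\phi,\xi}H$ with $\xi(h)=\psi(a^{-1}ha)$ --- precisely the compatibility bookkeeping you flagged as the delicate point.
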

Before proving the above result, let us recall that the \textit{fibre product} of two groups $H$ and $H'$, over a group $D$, for the surjective group morphisms $\phi\colon H\rightarrow D$ and $\psi\colon H'\rightarrow D$, is the subgroup $\{(h,h'),\phi(h)=\psi(h')\}$ of $H\times H'$. In some of our proofs, we will have to be particularly careful about the surjective group morphisms $\phi$ and $\psi$, in which case we will use the notation $H\times_{D,\phi,\psi}H$, instead of $H\times_DH$. The following Lemma gives the structure of the subgroups of a direct product, in terms of fibre products.

\begin{lemma}[Goursat's Lemma]\label{goursat}
    Let $H$ and $H'$ be groups, and $p_1\colon H\times H'\rightarrow H$, $p_2\colon H\times H'\rightarrow H'$ be the two canonical projections. Let $R$ be a subgroup of $H\times H'$ such that $p_1(R)=H$ and $p_2(R)=H'$. Then $R$ is a fibre product of the form $H\times_DH'$.
\end{lemma}
We can now prove Proposition \ref{sgpsautp1p1}.
\begin{proof}[Proof of \ref{sgpsautp1p1}]
    Define the group morphism $\delta\colon\aut(S)\rightarrow\dz_2$ which sends an element $g$ to $1$ if and only if $g$ swaps the rulings of $\pl\times\pl$. Since $\rho^G(S)=1$, there must be such element in $G$, hence the surjectivity of the restriction of $\delta$ to $G$. The kernel of $\delta$ in $\aut(S)$ is $\pgl_2(\dc)\times\pgl_2(\dc)$, hence the kernel of the restriction of $\delta$ to $G$ is a subgroup of $\pgl_2(\dc)\times\pgl_2(\dc)$. Applying Lemma \ref{goursat}, it is of the form $H\times_DH'$, where $H$ and $H'$ are subgroups of $\pgl_2(\dc)$. For the remaining of this proof, we will denote elements of $\aut(S)\cong(\pgl_2(\dc)\times\pgl_2(\dc))\rtimes\dz_2$ as triples $(h,h',a)$, where $h,h'\in\pgl_2(\dc)$, and $a\in\dz_2$. Let $(h,h',0)\in\ker\delta_G$, and let $g=(a,a',1)\in G$. Then $g(h,h',0)g^{-1}=(ah'a^{-1},a'ha'^{-1},0)$, so that, $\ker\delta_G$ being normal in $G$, $aH'a^{-1}=H$. Let $\alpha=(a,I,0)$, and denote $\phi\colon H\rightarrow D$, $\psi\colon H'\rightarrow D$ the morphisms of the fibre product. We have $\alpha(H\times_{D,\phi,\psi}H')\alpha^{-1}=H\times_{D,\phi,\xi}H$, where
    \begin{align}
        \xi\colon H&\rightarrow D\nonumber\\
        h&\mapsto\psi(a^{-1}ha),\nonumber
    \end{align}
    and we have $\ker(\delta_{\alpha G\alpha^{-1}})=(H\times_{D,\phi,\xi}H)\times\{e\}$.
\end{proof}

Let $G$ be a finite subgroup of $\aut(S)$ such that $\rho^G(S)=1$, and $H$ as in Proposition \ref{sgpsautp1p1}.

\begin{lemma}\label{actp1}
    If $G$ is not toric, then $H$ is isomorphic to $\aq$, $\sq$, or $\ac$.
\end{lemma}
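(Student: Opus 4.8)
The plan is to reduce the statement to a purely group-theoretic dichotomy and then produce an invariant square. By Klein's classification, every finite subgroup of $\pgl_2(\dc)$ is cyclic, dihedral, or isomorphic to one of $\aq$, $\sq$, $\ac$; so $H$ is one of these. It therefore suffices to prove the contrapositive: if $H$ is cyclic or dihedral, then $G$ is toric. By Remark \ref{postes1} it is enough to exhibit a $G$-invariant square $C$. Writing $\pi_1,\pi_2$ for the two projections, such a square has the form $C=\pi_1^{-1}(P_1)+\pi_2^{-1}(P_2)$ for two unordered pairs of points $P_1,P_2\subset\pl$, and $G$-invariance amounts to: every element of $\ker\delta_G$ preserves $P_1$ and $P_2$ in the respective factors, and the swapping elements interchange $\pi_1^{-1}(P_1)$ with $\pi_2^{-1}(P_2)$.

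First I would study the $H$-invariant pairs. Let $\mathcal P$ be the set of unordered pairs of points of $\pl$ fixed setwise by $H$ acting on the first factor. A cyclic group preserves the pair of fixed points of a generator, and a dihedral group preserves the pair of poles of its cyclic part, so $\mathcal P\neq\varnothing$ in both cases. By definition $H$ acts trivially on $\mathcal P$, and a direct check shows that the normalizer $N_{\pgl_2(\dc)}(H)$ permutes $\mathcal P$.

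Next I would bring in the swap. By Proposition \ref{sgpsautp1p1} we may assume $\ker\delta_G=H\times_DH$, with both projections onto $H$, and since $\rho^G(S)=1$ the group $G$ contains an element $g_0=(a_0,a_0',1)$ swapping the rulings. Normality of $\ker\delta_G$ together with surjectivity of the two projections gives, after the standard computation $g_0(\tau_1,\tau_2,0)g_0^{-1}=(a_0\tau_2a_0^{-1},a_0'\tau_1a_0'^{-1},0)$, that $a_0,a_0'\in N_{\pgl_2(\dc)}(H)$; in particular both act on $\mathcal P$. The key relation is $g_0^2=(a_0a_0',a_0'a_0,0)\in\ker\delta_G$, which forces $a_0a_0'\in H$ and hence makes $a_0a_0'$ act trivially on $\mathcal P$, i.e. $a_0$ and $a_0'$ are mutually inverse on $\mathcal P$. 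Choosing any $P_1\in\mathcal P$ and setting $P_2:=a_0'(P_1)\in\mathcal P$, the identities $g_0(\pi_1^{-1}(P_1))=\pi_2^{-1}(a_0'P_1)=\pi_2^{-1}(P_2)$ and $g_0(\pi_2^{-1}(P_2))=\pi_1^{-1}(a_0P_2)=\pi_1^{-1}(a_0a_0'P_1)=\pi_1^{-1}(P_1)$ show that $C=\pi_1^{-1}(P_1)+\pi_2^{-1}(P_2)$ is preserved by $g_0$, while $\ker\delta_G$ preserves it because $H$ fixes $P_1$ and $P_2$ setwise. Thus $C$ is $G$-invariant and $G$ is toric.

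The conceptual heart of the argument is the cooperation of the swapping part with the finite set of $H$-invariant pairs, which is exactly what the relation $a_0a_0'\in H$ provides: it removes any need to treat the degenerate cases $H\cong\dz_1,\dz_2,\dih_2$ (where $\mathcal P$ has more than one element and the swap could a priori permute the invariant pairs) separately. The main technical care will go into the bookkeeping in the triple notation for $(\pgl_2(\dc)\times\pgl_2(\dc))\rtimes\dz_2$—getting the composition and inversion conventions right in the computations of $g_0(\tau_1,\tau_2,0)g_0^{-1}$ and $g_0^2$—and into confirming that $\mathcal P$ is nonempty with trivial $H$-action for every cyclic and dihedral $H$.
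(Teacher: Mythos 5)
Your proof is correct and takes essentially the same route as the paper's: both argue the contrapositive via Klein's classification, build the invariant square out of an $H$-invariant pair of fibres of $\pi_1$ together with its image under a ruling-swapping element, and conclude that $G$ is toric via Remark \ref{postes1}. The only difference is bookkeeping in the verification of invariance: where you pass through the normalizer $N_{\pgl_2(\dc)}(H)$ acting on the set of $H$-invariant pairs and use $a_0a_0'\in H$, the paper gets the same conclusion by the one-line formal computation with $\alpha=hg^i=g^ih'$, using normality of $H\times_DH$ and $g^2\in H\times_DH$ --- which is precisely your relation in disguise.
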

\begin{proof}
    Assume that $H$ is isomorphic to $\dz_n$ or $\dih_n$. Then there are points $P_1$ and $P_2$ of $\pl$ which are either fixed by $H$ or forming an orbit of length $2$. Consider the divisors $L_1=\{(P_1,y),y\in\pl\},$ and $L_2=\{(P_2,y),y\in\pl\}$. The divisor $L_1+L_2$ is invariant by $H\times_D H$. Let $g\in G$ such that $G$ is generated by $H\times_DH$ and $g$. This elements swaps the rulings of $\pl\times\pl$, so that the divisor $D=L_1+L_2+g(L_1)+g(L_2)$ forms a square. Let $\alpha\in G$. Since $H\times_DH$ is normal in $G$ and $g^2\in H\times_D H$, we can write $\alpha=hg^i=g^ih'$, for some $h,h'\in H\times_DH$ and $i\in\{0,1\}$. We get:
    \begin{align}
        \alpha(D)&=g^ih(L_1+L_2)+g^ihg(L_1+L_2)\nonumber\\
        &=g^i(L_1)+g^i(L_2)+g^{i+1}h'(L_1+L_2)\nonumber\\
        &=g^i(L_1)+g^i(L_2)+g^{i+1}(L_1)+g^{i+1}(L_2)\nonumber\\
        &=D.\nonumber
    \end{align}
    Finally, groups fixing such a square are conjugated in $\aut(S)$ to a subgroup of $N(\dt)$. Indeed, there is an element $\xi\in\pgl_2(\dc)$ sending $P_1$ to $(1:0)$ and $P_2$ to $(0:1)$. Let $g=(\xi,\xi,0)\in(\pgl_2(\dc)\times\pgl_2(\dc))\times\dz_2\cong\aut(S)$. The image of $D$ by $g$ is the square $C$ as in the proof of Lemma \ref{es1}. Moreover, $gGg^{-1}$ fixes $C$, so that $gGg^{-1}$ is in $N(\dt)$, as implied by Remark \ref{postes1}.
\end{proof}

\begin{proposition}\label{nolinks}
    If $G$ is not toric, then there are no $G$-orbits of length $l\in\{1,2,3,5\}$.
\end{proposition}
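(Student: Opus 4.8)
The plan is to exploit the two projections $\pi_1,\pi_2\colon S\to\pl$ together with the structure established in Proposition~\ref{sgpsautp1p1} and Lemma~\ref{actp1}: after conjugation $G$ contains the normal subgroup $N=H\times_D H$, where $H\cong\aq$, $\sq$, or $\ac$, and $H$ is the image of $N$ under each of the two projections $p_1,p_2\colon\pgl_2(\dc)\times\pgl_2(\dc)\to\pgl_2(\dc)$. Given a $G$-orbit $O\subseteq S$ of length $l$, I would set $A=\pi_1(O)\subseteq\pl$ and first check that $A$ is $H$-invariant: for $h\in H$ choose $n\in N$ with $p_1(n)=h$; since $N$ preserves $O$ and $\pi_1(n\cdot x)=p_1(n)\cdot\pi_1(x)$, we get $h\cdot A=\pi_1(n\cdot O)=\pi_1(O)=A$. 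Thus $A$ is a nonempty union of $H$-orbits on $\pl$.

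Next I would invoke the classification of $H$ as a polyhedral subgroup of $\pgl_2(\dc)$. Since $\aq$, $\sq$, $\ac$ act as the tetrahedral, octahedral, and icosahedral groups, their orbits on $\pl$ have lengths lying in $\{4,6,12,\dots\}$, $\{6,8,12,\dots\}$, and $\{12,20,30,\dots\}$ respectively; in particular every $H$-orbit has length at least $4$, and no union of $H$-orbits has cardinality $5$ (for $\aq$ the attainable cardinalities are the positive integer combinations $4a+6b$, which omit $5$, and for $\sq,\ac$ the minimal orbit already exceeds $5$). As $\pi_1$ maps $O$ onto $A$, we obtain $l=|O|\ge|A|\ge 4$, which immediately rules out $l\in\{1,2,3\}$.

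It remains to exclude $l=5$, and I expect this to be the only delicate point, since the crude bound $|A|\ge 4$ does not suffice. If $l=5$, then $4\le|A|\le 5$, and because $|A|=5$ is impossible we must have $|A|=4$; this forces $H\cong\aq$ with $A$ a single tetrahedral orbit. Now $N$ acts transitively on $A$ (as $p_1(N)=H$), and $\pi_1|_O$ is $N$-equivariant, so every $n\in N$ carries the fibre $\pi_1^{-1}(a)\cap O$ bijectively onto $\pi_1^{-1}(p_1(n)\cdot a)\cap O$. Hence all four fibres of $\pi_1|_O\colon O\to A$ have equal cardinality, which forces $4\mid |O|=5$, a contradiction. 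Therefore no $G$-orbit has length $1,2,3$, or $5$.

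The main obstacle, as indicated, is precisely the case $l=5$ with $H\cong\aq$: one must supplement the orbit-length estimate with the equivariance of $\pi_1$ over a single $H$-orbit to obtain the divisibility $|A|\mid l$, which collides with $4\nmid 5$. Everything else reduces to the elementary observation that projecting a $G$-orbit to a factor yields an $H$-invariant subset of $\pl$, combined with the known orbit sizes of the finite polyhedral groups.
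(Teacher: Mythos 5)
Your argument is correct and is essentially the paper's own proof: both reduce the statement to the fact that the orbits of $H\in\{\aq,\sq,\ac\}$ on $\pl$ have lengths in $\{4,6,12\}$, $\{6,8,12,24\}$, $\{12,20,30,60\}$ respectively, and both exploit the $N$-equivariance of the projection (with $p_1(N)=H$), which forces all fibres of $\pi_1$ over a single $H$-orbit to have equal cardinality. The only cosmetic differences are that the paper applies the resulting divisibility from the outset --- the $H\times_DH$-orbit length of a point $(P_1,P_2)$ is a common multiple of the two projected $H$-orbit lengths $k_1,k_2$, and the $G$-orbit length is that or twice it --- whereas you invoke it only to exclude $l=5$, and that the paper verifies the needed $\aq$-orbit lengths by an explicit computation on the invariant conic (reducing $\sq$ and $\ac$ to $\aq$) rather than citing the classical polyhedral orbit classification.
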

\begin{proof}
    Let $P_1,P_2\in\pl$, and $k_1,k_2$ be the respective lengths of their $H$-orbits in $\pl$. The length of the orbit of $(P_1,P_2)$ in $S$ under the action of $H\times_DH$ must be a common multiple $l$ of $k_1$ and $k_2$, and the length of its $G$-orbit is either $l$ or $2l$. Hence, by Lemma \ref{actp1}, it is enough to show that there is no $\aq$-orbit of length $l\in\{1,2,3,5\}$ in $\pl$.
    
    Assume that $H\simeq\aq$. Recall that $\aq$ is unique up to conjugation in $\pgl_2(\dc)$. A simple way\footnote{One could also generate $\aq$ explicitly in $\pgl_2(\dc)$, for example with the matrices $\begin{pmatrix}1&0\\0&\mu_3\end{pmatrix}$ and $\begin{pmatrix}1&2\\1&-1\end{pmatrix}$, where $\mu_3$ is a primitive cube root of the unity.}to see what the $\aq$-orbits in $\pl$ are is to use the action of $\aq$ in $\pgl_3(\dc)$ generated by the matrices $\begin{pmatrix}0&0&1\\1&0&0\\0&1&0\end{pmatrix}$, and $\begin{pmatrix}-1&0&0\\0&1&0\\0&0&1\end{pmatrix}$. It preserves the conic $x^2+y^2+z^2=0$, hence acts on $\pl$ faithfully. We see that there is no fixed point, and there is no orbit of length $2$ either since $\aq$ does not have any subgroup of index $2$. The only subgroup of index $3$ is $\dz_2^2$, generated by the matrices $\begin{pmatrix}-1&0&0\\0&1&0\\0&0&1\end{pmatrix}$ and $\begin{pmatrix}1&0&0\\0&-1&0\\0&0&1\end{pmatrix}$. But it has no fixed point on the conic, hence there is no $H$-orbit of length $3$. Just considering the order of $\aq$, there is no $H$-orbit of length $5$.

    If $H$ is isomorphic to $\sq$ or $\ac$, and since $\aq,\sq,$ and $\ac$ are unique in $\pgl_2(\dc)$ up to conjugation, the lengths $H$-orbits in $\pl$ must be multiples of the lengths of the $\aq$-orbits, namely a multiple of $k\notin\{1,2,3,5\}$. This implies that there is no $H$-orbit of length $k\in\{1,2,3,5\}$ in $\pl$. Summing up what we mentioned in this proof, a $G$-orbit can only be a multiple of $k$, for some positive integer $k\notin\{1,2,3,5\}$.
\end{proof}

\begin{remark}
    There is a non-toric subgroup of $\aut(S)$ isomorphic to $\aq\rtimes\dz_2$, which has a $G$-orbit of length four in general position. Recall from Remark \ref{glinksp1p1}, that there is a $G$-link centered in such orbit. It is of the following form.
    \[\xymatrix
    {
         &Z_4\ar[ld]\ar[rd]&\\
         (S,G)\ar@{-->}[rr]&&(S,G')
    }
    \]
    By Proposition \ref{nolinks}, the only way for $S$ not to be $G$-solid if $G$ is a toric group with $\rho^G(S)=1$ would be to get the above diagram with $G'$ non-toric. To exclude this possibility, one could use a result of \cite{egor} stating that such link leads must lead to a $G$-isomorphic surface. We will use another method based on general group theory. It will have the advantage of giving a simple characterization of toric subgroups of $\aut(S)$, making the final result much easier to read. It turns out that a finite toric subgroup $G$ of $\aut(S)$ with $\rho^G=1$ cannot be isomorphic to a non-toric subgroup of $\aut(S)$ with $\rho^G(S)=1$.
\end{remark}

\begin{proposition}\label{aqnottor}
    Let $G$ be a subgroup of $\aut(S)$ such that $\rho^G(S)=1$. Then $G$ is toric if and only if is has no subgroup isomorphic to $\aq$.
\end{proposition}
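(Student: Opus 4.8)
The plan is to prove the two directions separately. One direction is essentially already available: if $G$ is toric, I claim it cannot contain a subgroup isomorphic to $\aq$. Indeed, since a toric $G$ sits (up to conjugacy) inside $N(\dt)$, every element of $G$ maps under $w$ into $\dih_4$, and the toric part $T = G \cap \dt$ is abelian. Thus $G$ is an extension of an abelian group $T$ by a subgroup $W \subseteq \dih_4$, and in particular $G$ is solvable. But the previous propositions have already enumerated every toric $G$ with $\rho^G(S)=1$: the toric parts are trivial, $\dz_2$, $\dz_2^2$, or $\dz_5$, giving groups such as $\dz_4 \times \dz_2$, $\dz_2^2\rtimes\dz_4$, $\dz_2^3\rtimes\dz_4$, $F_5$, and so on. One checks directly from this finite list that none of these $2$- and $5$-groups (or their products) contains a subgroup of order $12$ isomorphic to $\aq$; the cleanest uniform argument is that every such group is solvable while $\aq$, being $\cong\dz_2^2\rtimes\dz_3$ with a nontrivial action, still fits — so solvability alone does not suffice and I must instead argue by order and structure: the toric groups with $\rho^G=1$ have orders that are powers of $2$ times at most one factor of $5$, hence are never divisible by $3$ except possibly via $\dih_4$'s image, but $w(G)$ contains $\dz_4$ and $|\dih_4|=8$ contributes no factor of $3$ either. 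Since $|\aq|=12=4\cdot 3$, and no toric $G$ on the list has order divisible by $3$, no toric $G$ contains $\aq$.

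For the converse, I must show that if $\rho^G(S)=1$ and $G$ has no subgroup isomorphic to $\aq$, then $G$ is toric. I would argue the contrapositive using the structure theory already developed: suppose $G$ is \emph{not} toric. By Proposition \ref{sgpsautp1p1} there is an exact sequence $1 \to H\times_D H \to G \to \dz_2 \to 1$, and by Lemma \ref{actp1} the factor $H$ is isomorphic to $\aq$, $\sq$, or $\ac$. In each case $H$ contains a copy of $\aq$ (with $\aq\subset\sq$ obvious, and $\aq\subset\ac$ as the stabilizer subgroups). The key step is then to produce, inside the fibre product $H\times_D H$, a subgroup isomorphic to $\aq$. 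I expect this to be the main obstacle, because a fibre product $H\times_D H$ need not contain a diagonal copy of any given subgroup of $H$ unless that subgroup is compatible with the identification morphisms $\phi,\psi\colon H\to D$.

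To handle this, I would analyze the possible quotients $D$. Since $H \in \{\aq,\sq,\ac\}$ and $D$ is a common quotient of $H$ via the two surjections, I examine the normal subgroups: $\aq$ has normal subgroups giving quotients $1,\dz_3,\aq$; $\ac$ is simple, so $D\in\{1,\ac\}$; and $\sq$ has quotients $1,\dz_2,\sq$. When $D=1$ the fibre product is the full direct product $H\times H$, which plainly contains $\aq$ (embedded in either factor). In the remaining cases I would locate a subgroup $K\subseteq H$ isomorphic to $\aq$ on which both $\phi$ and $\psi$ agree after suitable adjustment, so that the graph of the induced automorphism of $K$ embeds $\aq$ diagonally into $H\times_D H\subseteq G$. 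For the simple case $H=\ac$ with $D=\ac$, the fibre product is the graph of an automorphism of $\ac$ and is itself isomorphic to $\ac\supset\aq$, so we are done immediately; the genuinely delicate bookkeeping is confined to the $\sq$ and $\aq$ cases with nontrivial $D$, where I must verify that restricting to an alternating subgroup makes the two projection morphisms compatible. Once a copy of $\aq$ is exhibited in $H\times_D H\subseteq G$, the contrapositive is established and the proposition follows.
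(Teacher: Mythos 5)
Your forward direction (toric $\Rightarrow$ no copy of $\aq$) contains a genuine gap. You assert that the earlier propositions ``have already enumerated every toric $G$ with $\rho^G(S)=1$'' and deduce that no such group has order divisible by $3$; both claims are false. The paper's enumeration only covers toric groups containing $r$ whose toric part $T=G\cap\dt$ has order at most $5$ --- those are the only ones that can admit Sarkisov links --- while groups with $|T|>5$ were merely shown to be $G$-rigid, never listed. Concretely, take $T\cong\dz_2^2$ replaced by the full $3$-torsion of the torus, $T=\left<(\mu_3x,y),(x,\mu_3y)\right>\cong\dz_3^2$, which is characteristic in $\dt$ and hence $r$-invariant: the group $G=\left<T,r\right>$ is toric, contains $r$ (which swaps the rulings, so $\rho^G(S)=1$), and has order $36$, divisible by $3$. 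So your divisibility argument collapses, and (as you yourself note) solvability cannot replace it since $\aq$ is solvable. The correct obstruction is structural, and is what the paper uses: if $K\cong\aq$ sat inside $N(\dt)$, then $K\cap\dt$ would be an abelian normal subgroup of $K$ whose quotient $K/(K\cap\dt)$ embeds via $w$ into $\dih_4$; but the quotients of $\aq$ by its abelian normal subgroups are $\aq$ itself and $\dz_3$, and neither embeds in $\dih_4$ (the orders $12$ and $3$ are incompatible with $|\dih_4|=8$). This rules out $\aq$ inside any toric group, of whatever order, with no classification needed.

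Your converse direction follows essentially the paper's route (Proposition \ref{sgpsautp1p1} together with Lemma \ref{actp1}, then exhibiting $\aq$ inside $H\times_DH$), and you correctly identify the one real subtlety: a fibre product need not contain a diagonal copy of a given subgroup --- the paper's Example \ref{contrex}, with $H=\dih_4$, is exactly the counterexample you anticipate. Your case analysis over $D$ can indeed be completed: for $D=1$ or $D=H$ you are done as you say; for $(H,D)=(\sq,\dz_2)$ the subgroup $\ker(\phi)\times\{\id\}\cong\aq$ already lies in the fibre product; and in the remaining cases $(\sq,\st)$ and $(\aq,\dz_3)$ the kernel is the unique normal subgroup $\dz_2^2$ and the extension splits, $H\cong\dz_2^2\rtimes D$, so your ``graph after adjusting by an automorphism of $D$'' construction goes through --- this is precisely the content of the paper's Lemma \ref{sgpfibre}, which proves it uniformly from a splitting $H=N\rtimes D$ with $\ker(\phi)=\ker(\psi)=N$. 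As written, though, this half of your argument is a plan rather than a proof: the ``delicate bookkeeping'' you defer is exactly the verification that Lemma \ref{sgpfibre} supplies, and it must be carried out (or cited) for the direction to stand.
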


We will step back to some general group theory to achieve the above result. To prove that a non-toric group contains $\aq$, it is enough to show that $H\times_DH$ contains a subgroup isomorphic to $H$. The fact that a fibre product of this form contains a subgroup isomorphic to $H$ is not always true if we consider any group $H$, even though working on many cases may suggest it is. In the following example, the group $H$ has the smallest possible order such that there are surjective morphisms $\phi,\psi\colon H\rightarrow D$ making the $H\times_DH$ a group which does not contain any subgroup isomorphic to $H$.

\begin{example}\label{contrex}
    Let $H\simeq \dih_4=\left<a,b|a^4=b^2=(ab)^2=1\right>$, take $D=\dz_2^2$, and
    \begin{align}
        \phi\colon a&\mapsto(0,1)\nonumber\\
        b&\mapsto(1,0)\nonumber\\
        \psi\colon a&\mapsto(1,0)\nonumber\\
        b&\mapsto(1,1).\nonumber
    \end{align}
    Then $H\times_DH$ is isomorphic to $\dz_2^2\rtimes\dz_4$, and does not have any subgroup isomorphic to $H$. This example exists in our context. Let $H=\left<a,b\right>$, where $a=\begin{pmatrix}
            1&0\\0&i
        \end{pmatrix}$, and $b=\begin{pmatrix}
            0&1\\1&0
        \end{pmatrix}$, and define the morphisms $\phi$ and $\psi$ as above. To have a subgroup $G$ of $\aut(S)$ such that $\rho^G(S)=1$ and $G\cap(\pgl_2(\dc)\times\pgl_2(\dc))=H\times_{\dz_2^2}H$, we take $G=\left<H\times_{\dz_2^2}H,g\right>$, where $g=(\mu_8^{-1}y,\mu_8x)$, with $\mu_8$ a square root of $i$.
\end{example}

However, it is true that if a group $H$ splits in a nice way, then a fibre product of the form $H\times_DH$ will have a subgroup isomorphic to $H$. It will apply in particular for the groups we need, namely $\aq,\sq$, and $\ac$.

\begin{lemma}\label{sgpfibre}
    Let $H$ be a group of the form $N\rtimes D$, and $\phi,\psi\colon H\rightarrow D$ be surjective group morphisms such that $\ker(\phi)=\ker(\psi)=N$. Then $H\times_DH$ has a subgroup isomorphic to $H$.
\end{lemma}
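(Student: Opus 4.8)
The plan is to produce an explicit copy of $H$ inside $H\times_D H$, realised as a ``twisted diagonal''. The point of the hypothesis $H=N\rtimes D$ is that it equips us with the canonical projection $\pi\colon H\to D$ with $\ker\pi=N$, together with a section $\iota\colon D\hookrightarrow H$ satisfying $\pi\circ\iota=\id_D$; the whole argument hinges on the existence of $\iota$.

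First I would note that, since $\phi$, $\psi$ and $\pi$ are all surjections $H\to D$ with the same kernel $N$, the first isomorphism theorem gives induced isomorphisms $H/N\xrightarrow{\sim}D$ and hence lets us write $\phi=\alpha\circ\pi$ and $\psi=\beta\circ\pi$ for suitable automorphisms $\alpha,\beta\in\aut(D)$. Setting $\gamma:=\beta^{-1}\alpha\in\aut(D)$, I define $\theta:=\iota\circ\gamma\circ\pi\colon H\to H$, which is a homomorphism as a composite of homomorphisms.

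The key verification is that the map $j\colon H\to H\times H$, $h\mapsto(h,\theta(h))$, takes values in the fibre product. Using $\pi\circ\iota=\id_D$ one computes $\psi(\theta(h))=\beta\,\pi\bigl(\iota\gamma\pi(h)\bigr)=\beta\gamma\,\pi(h)=\alpha\,\pi(h)=\phi(h)$, so indeed $\phi(h)=\psi(\theta(h))$ and $(h,\theta(h))\in H\times_{D,\phi,\psi}H$ for all $h$. As $j$ is a group homomorphism whose first component is the identity, it is injective, and its image is a subgroup of $H\times_D H$ isomorphic to $H$, which proves the lemma.

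I do not expect a serious obstacle, provided the section $\iota$ is available, and that is precisely where the semidirect hypothesis is indispensable: in Example \ref{contrex} one has $H=\dih_4$ with $N=\ker\phi=\ker\psi=\langle a^2\rangle$ its centre, and this $N$ admits no complement in $\dih_4$ (every subgroup of order $4$ contains $a^2$), so no section exists and the twisted diagonal cannot be formed---in agreement with the fact that there $H\times_D H$ contains no copy of $H$. The only step needing genuine care is therefore the factorisations $\phi=\alpha\pi$ and $\psi=\beta\pi$, which rely on all three maps sharing the kernel $N$.
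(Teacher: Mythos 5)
Your proof is correct, and in fact it produces exactly the same subgroup as the paper's proof, but by a genuinely different and somewhat more economical route. The paper never factors $\phi$ and $\psi$ through the canonical projection $\pi$; instead it observes that $\ker(\psi)\cap D=\{e\}$ forces the restriction $\psi|_D\colon D\to D$ to be an isomorphism, and then assembles the copy of $H$ as an internal product $\tilde N\tilde D$, where $\tilde N=\ker(\phi)\times\{\id\}$ and $\tilde D=\{(d,(\psi|_D)^{-1}\circ\phi(d)),\ d\in D\}$, finally checking by hand that the first projection restricts to a bijection $\tilde N\tilde D\to H$. Unwinding your notation, with $\phi=\alpha\circ\pi$, $\psi=\beta\circ\pi$ and $\gamma=\beta^{-1}\alpha$, one has $(\psi|_D)^{-1}\circ\phi(d)=\gamma(d)$ for $d\in D$, and writing $h=n\,\iota(d)$ gives $(h,\theta(h))=(n,e)\cdot(\iota(d),\iota\gamma(d))$, so your graph $\{(h,\theta(h)),\ h\in H\}$ coincides with the paper's $\tilde N\tilde D$. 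What your packaging buys: the graph of the homomorphism $\theta=\iota\circ\gamma\circ\pi$ is automatically a subgroup of the fibre product once the compatibility $\psi\circ\theta=\phi$ is checked (which you do), and injectivity is immediate from the identity first coordinate; you thereby avoid the paper's separate verifications that $\tilde D$ is a subgroup, that the product $\tilde N\tilde D$ is one (which implicitly uses normality of $\tilde N$), and that the projection is injective and surjective on it. What the paper's packaging buys is that the internal $N$-by-$D$ structure of the embedded copy of $H$ is visible. Your closing remark about Example \ref{contrex} is also accurate and is a check the paper does not make explicit: there $\ker\phi=\ker\psi=\left<a^2\right>$ is the centre of $\dih_4$, every order-$4$ subgroup of $\dih_4$ contains $a^2$, so $N$ has no complement, and the semidirect hypothesis fails exactly where it must.
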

\begin{proof}
    The subgroups $N=\ker(\psi)$ and $D$ having trivial intersection in $H$ means that the restriction of $\psi$ to $D$ is an isomorphism. So we can define the set $\tilde D=\{(d,\psi^{-1}\circ\phi(d)),d\in D\}$. It is a subgroup of $H\times_DH$, isomorphic to $D$. Denote $\tilde N$ the subgroup $\ker(\phi)\times\{\id\}$ of $H\times H$. It is a subgroup of $H\times_DH$, isomorphic to $\ker(\phi)=N$. Consider the projection $\pi$ from $H\times H$ to the first factor $H$. Its restriction to $\tilde N\tilde D$ is surjective by construction of $\tilde N$ and $\tilde D$. If $\pi(g,g')=e$, then $g=e$. But since $\tilde N$ and $\tilde D$ have trivial intersection, $e=e.e$ is the only decomposition of $e$ into a product of an element of $N$ and an element of $D$. Hence, we get that $(g,g')=(e.e,e.\psi^{-1}\circ\phi(e))=(e.e)$, so that $\pi$ restricts to an isomorphism between $\tilde N\tilde D\subset H\times_DH$ and $H$.
\end{proof}

We get the following immediate consequence.

\begin{corollary}
    In the following cases, a fibre product of the form $H\times_DH$ contains a subgroup isomorphic to $H$.
    \begin{itemize}
        \item $H$ is simple,
        \item $H$ is cyclic,
        \item $H$ is isomorphic to $\mathfrak A_n$ or $\mathfrak S_n$ for some $n$.
    \end{itemize}
\end{corollary}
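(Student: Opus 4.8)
The plan is to prove the corollary by verifying that each listed class of groups $H$ satisfies the hypotheses of Lemma \ref{sgpfibre}, namely that $H$ admits a semidirect product decomposition $H \cong N \rtimes D$ with surjective morphisms $\phi,\psi \colon H \rightarrow D$ whose common kernel is $N$. Once such a decomposition is exhibited, Lemma \ref{sgpfibre} immediately yields a subgroup of $H \times_D H$ isomorphic to $H$, so the corollary reduces to producing, in each case, an appropriate splitting relative to the data $(\phi,\psi)$ of the fibre product. The key observation is that the fibre product $H \times_D H$ comes equipped with two fixed surjections $\phi, \psi$ onto a common quotient $D$, and what we must check is that $H$ splits over $\ker\phi = \ker\psi = N$.

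First I would treat the two degenerate cases where the quotient $D$ is forced to be trivial or all of $H$. If $H$ is simple, then since $\phi$ and $\psi$ are surjective, $\ker\phi$ is a normal subgroup of $H$, hence either trivial or $H$; correspondingly $D$ is either $H$ itself (take $N = \{e\}$, and the splitting $H \cong \{e\} \rtimes H$ is trivial) or $D = \{e\}$ (in which case $H \times_D H = H \times H$ visibly contains the diagonal copy of $H$). If $H$ is cyclic, then both $H$ and the quotient $D$ are cyclic; writing $H \cong \dz_{ab}$ and $D \cong \dz_a$, one has $N \cong \dz_b$, and $H \cong \dz_b \rtimes \dz_a$ is a (direct, hence semidirect) product precisely when $\gcd(a,b)=1$. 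In the general cyclic case a direct splitting need not exist, so here I would instead appeal directly to Lemma \ref{sgpfibre} only after reducing to coprime order components, or simply observe that for cyclic $H$ the diagonal-type subgroup $\{(h, \sigma(h))\} \subset H \times_D H$ can be built explicitly using that the relevant maps are group homomorphisms on abelian groups.

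For the alternating and symmetric groups $\mathfrak{A}_n$ and $\mathfrak{S}_n$, I would run the same normal-subgroup analysis as for simple groups. The key input is the classification of normal subgroups of these groups: $\mathfrak{A}_n$ is simple for $n \geq 5$ and has the exceptional normal subgroup $\dz_2^2$ inside $\mathfrak{A}_4$, while $\mathfrak{S}_n$ has normal subgroups $\{e\}$, $\mathfrak{A}_n$, and $\mathfrak{S}_n$ (plus the extra normal subgroups inside $\mathfrak{S}_4$). In each case one checks that for the relevant choice of $N = \ker\phi$, the group $H$ splits as $N \rtimes D$: for instance $\mathfrak{S}_n \cong \mathfrak{A}_n \rtimes \dz_2$, and $\mathfrak{A}_4 \cong \dz_2^2 \rtimes \dz_3$. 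The cases where $N$ is trivial or all of $H$ are handled as in the simple case. Applying Lemma \ref{sgpfibre} then finishes each case.

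The main obstacle I anticipate is the cyclic case, precisely because the semidirect-product hypothesis of Lemma \ref{sgpfibre} can fail: $\dz_4$ does not split as $\dz_2 \rtimes \dz_2$, yet $\dz_4$ is cyclic and the corollary still asserts the conclusion. This signals that for cyclic groups one cannot simply invoke Lemma \ref{sgpfibre} verbatim, and the honest argument must instead construct the desired subgroup of $H \times_D H$ by hand. I would resolve this by noting that for abelian $H$, given the two surjections $\phi,\psi$ with common kernel $N$, one can choose a set-theoretic section and exploit commutativity to produce a homomorphic diagonal embedding; alternatively, one reduces to the prime-power case where $N \rtimes D$ is genuinely a direct product precisely when orders are coprime, and treats the non-coprime situation separately. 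Verifying that the corollary's cyclic claim holds despite the possible failure of splitting is the subtle point; all the non-abelian cases fall out cleanly from Lemma \ref{sgpfibre} once the normal subgroup structure is recorded.
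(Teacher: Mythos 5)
Your overall strategy is exactly the paper's: the corollary is presented there as an ``immediate consequence'' of Lemma \ref{sgpfibre}, with the implicit case-check you spell out (for $H$ simple the kernel is trivial or all of $H$, so the splitting is trivial; for the symmetric and alternating groups one uses $\mathfrak S_n\cong\mathfrak A_n\rtimes\dz_2$, $\mathfrak A_4\cong\dz_2^2\rtimes\dz_3$, and, for the extra normal subgroup of $\mathfrak S_4$, $\mathfrak S_4\cong\dz_2^2\rtimes\st$). You deserve credit for noticing the one point where this derivation is \emph{not} literally immediate: a cyclic group need not split over the kernel (e.g.\ $\dz_4$ over $\dz_2$), so Lemma \ref{sgpfibre} does not apply verbatim there --- a subtlety the paper itself glosses over. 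One further hypothesis of Lemma \ref{sgpfibre} that you use but never verify is $\ker\phi=\ker\psi$; fortunately it is automatic for every group on the list, since a cyclic group has a unique subgroup of each index, and the distinct normal subgroups of $\mathfrak A_n$ and $\mathfrak S_n$ have pairwise non-isomorphic quotients, so the isomorphism type of $D$ pins down both kernels.

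The genuine gap is that your repair of the cyclic case is only sketched, and neither sketch is a proof as it stands: composing $\phi$ with a set-theoretic section of $\psi$ does not give a homomorphism, commutativity of $H$ notwithstanding (sections of quotient maps of abelian groups are not homomorphisms in general, which is precisely why $\dz_4$ fails to split), and your coprimality reduction explicitly defers ``the non-coprime situation,'' which is exactly the problematic one. The honest argument is, however, one line. Write $H=\langle g\rangle$ of order $n$ and let $\phi,\psi\colon H\rightarrow D$ be the two surjections; choose any $h\in\psi^{-1}(\phi(g))$. Then $(g,h)\in H\times_DH$, and since the order of $h$ divides $n$, the order of $(g,h)$ equals $\mathrm{lcm}(n,\ord(h))=n$, so $\langle(g,h)\rangle\cong\dz_n\cong H$. (Injectivity of a lift is not needed: the first coordinate alone forces full order.) With this substituted for your sketch, your proof is complete and otherwise follows the paper's intended route; note also that for the paper's application only $H\in\{\aq,\sq,\ac\}$ is ever used, where Lemma \ref{sgpfibre} does apply directly.
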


We can now prove Proposition \ref{aqnottor}.
\begin{proof}[Proof of \ref{aqnottor}]
    Recall that the normalizer $N(\dt)$ of the torus $\dt$ in $\aut(S)$ satisfies this exact sequence:
    \[\begin{tikzcd}
	1 & {\dt} & {N(\dt)} & {\dih_4} & {1.}
	\arrow[from=1-1, to=1-2]
	\arrow[from=1-2, to=1-3]
	\arrow["w", from=1-3, to=1-4]
	\arrow[from=1-4, to=1-5]
\end{tikzcd}\]
But there is no subgroup of $\dih_4$ isomorphic to a quotient of $\aq$ by an abelian group. Hence, if $G$ is toric, it cannot have a subgroup isomorphic to $\aq$. Conversely, assume that $G$ is a finite toric subgroup of $\aut(S)$ with $\rho^G(S)=1$. By Proposition \ref{sgpsautp1p1}, it satisfies the exact sequence
\[\begin{tikzcd}
    	1 & {H\times_DH} & {G} & {\dz_2} & {1,}
    	\arrow[from=1-1, to=1-2]
    	\arrow[from=1-2, to=1-3]
    	\arrow[from=1-3, to=1-4]
    	\arrow[from=1-4, to=1-5]
\end{tikzcd}\] with $H$ isomorphic to $\aq$, $\sq$, or $\ac$. By Lemma \ref{sgpfibre}, the group $G$ has a subgroup isomorphic to $H$. Hence, it has a subgroup isomorphic to $\aq$.
\end{proof}

Summing up the above results, we get the following.
\begin{proposition}
    Let $G$ be a finite non-toric subgroup of $\aut(S)$. Then $S$ is $G$-solid.
\end{proposition}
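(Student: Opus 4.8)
The plan is to combine the orbit restriction of Proposition \ref{nolinks} with the abstract group-theoretic characterization of toricity in Proposition \ref{aqnottor}, and then to induct along $G$-Sarkisov chains. The point is that for a non-toric $G$ the possible links are so restricted that one never escapes the pair $(\pl\times\pl,\text{non-toric})$, so a conic bundle is never reached.

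First I would argue that, starting from a non-toric $G$ with $\rho^G(S)=1$, the only $G$-Sarkisov link out of $(S,G)$ that can reach a non-$G$-isomorphic Mori fibre space is the type \MakeUppercase{\romannumeral 2} self-link centered at an orbit of length four. Indeed, by Remark \ref{glinksp1p1} the only links from $S=\pl\times\pl$ leading to a non-$G$-isomorphic surface are centered at orbits of length $k\in\{1,2,3,4,5\}$, and the links with $k=1,2,3,5$ produce, respectively, $\pp$, a conic bundle over $\pl$, and del Pezzo surfaces of degrees $6$ and $5$. But Proposition \ref{nolinks} forbids $G$-orbits of length $1,2,3,5$ for non-toric $G$, so only the link with $k=4$ survives, and its target is again $\pl\times\pl$, giving a diagram $(S,G)\dashrightarrow(S,G')$.

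The key step is to show that this link preserves non-toricity, i.e. that $G'$ is again a non-toric group with $\rho^{G'}(S)=1$. Invariance of the invariant Picard rank is automatic, since $S'=S$ is the del Pezzo endpoint of a type \MakeUppercase{\romannumeral 2} link whose base is a point. For non-toricity I would invoke Proposition \ref{aqnottor}: the link is a $G$-birational map, so $G'$ and $G$ are isomorphic as abstract groups; since $G$ is non-toric it contains a subgroup isomorphic to $\aq$, hence so does $G'$, and therefore $G'$ is non-toric as well. This is exactly where the abstract characterization of toric subgroups pays off, and it is what lets me avoid appealing to the result of \cite{egor} that the length-four link returns a $G$-isomorphic surface.

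Finally I would assemble these into an induction along an arbitrary Sarkisov chain out of $(S,G)$. Every Mori fibre space appearing in such a chain is $\pl\times\pl$ carrying a non-toric group of invariant Picard rank one, and from each of them the only available link again lands on such a pair; no link ever produces a conic bundle. Hence no $G$-Mori fibre space that is $G$-birational to $S$ is a conic bundle, which is precisely the statement that $S$ is $G$-solid. I expect the main obstacle to be the bookkeeping of the first step, namely verifying that the enumeration of Remark \ref{glinksp1p1} together with Proposition \ref{nolinks} really leaves only the $k=4$ link, rather than the induction itself, which is formal once non-toricity has been shown to be preserved.
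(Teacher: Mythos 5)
Your proposal is correct and is essentially the paper's own argument: the paper's proposition is stated as a summary of Remark \ref{glinksp1p1}, Proposition \ref{nolinks}, and Proposition \ref{aqnottor}, and the remark preceding Proposition \ref{aqnottor} spells out precisely your strategy --- only the length-four self-link $(S,G)\dashrightarrow(S,G')$ survives, and non-toricity of $G'$ follows because toricity is characterized by the abstract-group property of containing no copy of $\aq$, deliberately avoiding the result of \cite{egor}. Your only addition is making the induction along Sarkisov chains and the invariance of $\rho^{G'}(S)=1$ explicit, which the paper leaves implicit.
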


Finally, here is the classification of subgroups $G$ of $\aut(S)$ such that $S$ is $G$-solid.

\begin{theorem}\label{gsolp1p1}
    Let $G$ be a finite subgroup of $\aut(S)$, such that $\rho^G(S)=1$. Then $S$ is not $G$-solid if and only if $G$ is toric, and in one of the following cases.
    \begin{itemize}
        \item The group $G$ is not conjugated in $\aut(S)$ to a group containing $r=(\frac{1}{y},x)$.
        \item $G$ is conjugated in $\aut(\pl\times\pl)$ to one of the following groups.
        \begin{itemize}
            \item $\dz_4$, generated by $r=(\frac{1}{y},x)$,
            \item $\dih_4$, generated by $r$, and $(y,x)$,
            \item $\dih_4$, generated by $r$ and $(-y,-x)$,
            \item $\dz_4\times\dz_2$, generated by $r$ and $t=(-x,-y)$,
            \item $\dz_2\times \dih_4$, generated by $t$ and $r$ as above, and $h=(y,x)$.
        \end{itemize}
    \end{itemize}
\end{theorem}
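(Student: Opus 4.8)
<br>

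The plan is to assemble Theorem \ref{gsolp1p1} purely as a bookkeeping synthesis of the case analysis already completed in this section, so the proof should be short and cite the relevant earlier results rather than introduce new geometry. First I would dispose of the non-toric case: by Proposition \ref{aqnottor}, a group $G$ with $\rho^G(S)=1$ is toric if and only if it has no subgroup isomorphic to $\aq$, and the immediately preceding proposition asserts that every non-toric such $G$ makes $S$ $G$-solid. Hence the ``only if'' direction is automatic---if $S$ is not $G$-solid then $G$ must be toric---and the entire remaining task is to identify, among toric groups, exactly which fail $G$-solidity.

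Second, I would reduce to the normal form. Since $G$ is toric we may conjugate it into $N(\dt)$, and Lemma \ref{conjr} (the lemma stating that a $G$-solid toric group is conjugate by a torus element to a subgroup of $N(\dt)$ containing $r$) tells us that if $G$ is \emph{not} conjugate to a group containing $r$, then $S$ is already not $G$-solid; this accounts for the first bullet of the theorem. So I would assume from here on that $r\in G$ and that $S$ is $G$-solid, aiming to pin down $G$ exactly. By Lemma \ref{conjr} again, the proof of $G$-solidity failing for groups without $w(r)\in W$ is handled, so the surviving groups have $w(r)=w(r)\in W$, i.e.\ $\dz_4\subseteq W\subseteq\dih_4$.

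Third, I would run through the stratification by the toric part $T=G\cap\dt$, invoking each earlier proposition in turn. By the lemma on $|T|>5$, such groups are $G$-rigid, so $T$ has order at most $5$; and the lemma excluding $T\cong\dz_3,\dz_4$ removes two orders. The propositions treating $T=1$, $T\cong\dz_2$, $T\cong\dz_2^2$, and $T\cong\dz_5$ each list precisely which $G$ arise and flag, for each, whether $S$ is $G$-solid. Collecting the \emph{non}-$G$-solid entries from those propositions yields exactly: $\dz_4=\langle r\rangle$, $\dih_4=\langle r,(y,x)\rangle$, $\dih_4=\langle r,(-y,-x)\rangle$ (from the $T=1$ proposition), and $\dz_4\times\dz_2=\langle r,(-x,-y)\rangle$, $\dz_2\times\dih_4=\langle r,(-x,-y),(y,x)\rangle$ (from the $T\cong\dz_2$ proposition). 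The $T\cong\dz_2^2$ and $T\cong\dz_5$ propositions produce only $G$-solid surfaces, contributing nothing to the list. This is the full content of the second bullet.

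I do not expect a genuine obstacle here, since all the hard work---the Sarkisov-link analysis, the fixed-point and orbit computations, and the group-theoretic toric characterization---is already done. The one point requiring care is completeness of the enumeration: I must argue that the five listed propositions exhaust all toric $G$ with $\rho^G(S)=1$ and $r\in G$, which follows because $T$ ranges over exactly $\{1,\dz_2,\dz_2^2,\dz_5\}$ once $\dz_3$ and $\dz_4$ are excluded and $|T|\le5$ is forced. I would phrase the proof as: cite Proposition \ref{aqnottor} and its corollary to restrict to toric $G$; cite Lemma \ref{conjr} to separate off the non-$r$-containing case; then tabulate the conclusions of the four $T$-stratified propositions and read off precisely the non-$G$-solid cases, noting that the two ``$G$-solid'' propositions ($T\cong\dz_2^2$ and $T\cong\dz_5$) add no exceptions. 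Care is also needed that the groups are listed up to conjugacy in $\aut(S)$ rather than merely in $N(\dt)$, but this is already built into the statements of the cited propositions.
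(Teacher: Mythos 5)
Your proposal is correct and is essentially the paper's own (implicit) proof: the theorem appears there as a pure summary of the section, with the non-toric case disposed of by Proposition \ref{aqnottor} and the proposition immediately before the theorem, the first bullet obtained as the contrapositive of the lemma forcing $r\in G$ up to conjugation, and the second bullet read off by tabulating the propositions stratified by $T=G\cap\dt$, which ranges exactly over $\{1,\dz_2,\dz_2^2,\dz_5\}$ once $\dz_3$, $\dz_4$, and $|T|>5$ are excluded. One cosmetic caveat: the result you invoke as Lemma \ref{conjr} is in fact the unlabelled lemma of Section \ref{dp8} (the label \ref{conjr} points to the conjugation lemma in the degree-$6$ section, a cross-referencing quirk the paper itself shares), but since you state its content correctly this does not affect the argument.
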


Once again, Proposition \ref{aqnottor} not only allows us to conclude about the $G$-solidity of the non-toric finite sugroups of $\aut(S)$, but also gives a way to reformulate Theorem \ref{gsolp1p1} without mentioning the toric structure of $S$, giving an equivalent statement which is shorter and easier to read. Let $G_{16}\cong\dz_2\times \dih_4$ be the subgroup of $\aut(S)$ generated by $r=(\frac{1}{y},x),s=(y,x),$ and $t=(-x,-y)$.
\begin{theorem}
    Let $G$ be a finite subgroup of $\aut(S)$, such that $\rho^G(S)=1$. Then $S$ is $G$-solid if and only if, up to conjugation in $\aut(S)$,
    \begin{itemize}
        \item either $\aq\subset G$,
        \item or $(r\in G$ and $G\not\subset G_{16})$.
    \end{itemize}
\end{theorem}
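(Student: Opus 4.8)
The plan is to deduce this statement directly from Theorem \ref{gsolp1p1}, which already decides $G$-solidity, by using Proposition \ref{aqnottor} to trade the word \emph{toric} for the condition of containing a subgroup isomorphic to $\aq$. First I would record the two reductions that make the two statements line up. By Proposition \ref{aqnottor}, a group $G$ with $\rho^G(S)=1$ has a subgroup isomorphic to $\aq$ (which is what ``$\aq\subset G$ up to conjugation'' means) precisely when $G$ is non-toric, and the proposition asserting that every non-toric $G$ is $G$-solid shows that the first bullet accounts for exactly the non-toric classes, all of which are solid. When $G$ is toric, Proposition \ref{aqnottor} forbids an $\aq$-subgroup, so the first bullet never applies, and the theorem reduces to the claim that a toric $G$ is $G$-solid if and only if, up to conjugation, $r\in G$ and $G\not\subset G_{16}$.

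For the toric case I would match this against Theorem \ref{gsolp1p1}, which declares a toric $G$ \emph{not} $G$-solid exactly when it is not conjugate to a group containing $r$, or it is conjugate to one of five explicit subgroups; call this list $\mathcal{F}$. The key combinatorial observation is that $\mathcal{F}$ is precisely the set of subgroups $G'$ with $\langle r\rangle\subseteq G'\subseteq G_{16}$. This is a finite lattice computation inside $G_{16}\cong\dz_2\times\dih_4$: the relations $srs^{-1}=r^{-1}$ and $trt^{-1}=r$, established in the case analysis for trivial and for $\dz_2$ toric part, show that $\langle r\rangle$ is normal in $G_{16}$ with quotient $G_{16}/\langle r\rangle\cong\dz_2^2$ (both $\bar s$ and $\bar t$ being commuting involutions). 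Hence the subgroups containing $r$ correspond to the subgroups of $\dz_2^2$, yielding $\langle r\rangle\cong\dz_4$, the three index-two subgroups $\langle r,s\rangle$, $\langle r,(-y,-x)\rangle$, $\langle r,t\rangle$, and $G_{16}$ itself, which are exactly the five members of $\mathcal{F}$.

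Granting this identification, I would assemble the equivalence as follows. In the forward direction, if $S$ is $G$-solid and $G$ is toric, the lemma asserting that a toric $G$-solid surface has $G$ conjugate by a torus element into $N(\dt)$ with $r\in G$ provides such a representative; by Theorem \ref{gsolp1p1} this representative is not in $\mathcal{F}$, hence by the identification it is not contained in $G_{16}$, which is the second bullet. Conversely, a representative with $r\in G$ and $G\not\subset G_{16}$ is a toric group containing $r$ that lies outside $\mathcal{F}$, so Theorem \ref{gsolp1p1} makes it $G$-solid, and solidity is conjugation-invariant.

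The main obstacle is the bookkeeping of conjugacy, because $G$-solidity depends only on the conjugacy class of $G$ whereas the condition $G\subseteq G_{16}$ does not: a priori the class of a non-solid $\mathcal{F}$-group might also contain a representative with $r$ sitting outside $G_{16}$, which would make the second bullet spuriously true on a non-solid class and break the theorem. The heart of the argument is therefore to show this cannot occur, i.e. that every subgroup of $\aut(S)$ that is conjugate to a member of $\mathcal{F}$ and contains $r$ already lies in $G_{16}$. I would prove this by noting that such a subgroup $F'$ is toric, hence stabilizes some square $C'$ conjugate to the square $C$ of Lemma \ref{es1}, and that $r\in F'$ acts as a four-cycle on the vertices of $C'$; using Remark \ref{postes1} (that $N(\dt)$ is exactly the stabilizer of $C$) together with the rigidity of the position of $r$ relative to $C$, one forces $C'=C$, so $F'\subseteq N(\dt)$, where the earlier case-by-case propositions pin it down as one of the five groups of $\mathcal{F}$. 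Carrying out this normalization—showing $r$ cannot be conjugated out of $G_{16}$ within a fixed class—is the step where the genuine work lies; everything else is a translation between Proposition \ref{aqnottor}, Theorem \ref{gsolp1p1}, and the subgroup lattice of $G_{16}$.
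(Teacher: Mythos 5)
Your reduction is exactly the paper's (implicit) argument: the paper offers no separate proof of this theorem, obtaining it by combining Proposition \ref{aqnottor} (toric $\Leftrightarrow$ no subgroup $\aq$) with Theorem \ref{gsolp1p1}, and your lattice computation is the correct missing link --- since $\langle r\rangle$ is normal in $G_{16}$ with quotient $\dz_2^2$ generated by $\bar s$ and $\bar t$, the subgroups of $G_{16}$ containing $r$ are precisely $\langle r\rangle$, $\langle r,s\rangle$, $\langle r,(-y,-x)\rangle$, $\langle r,t\rangle$ and $G_{16}$, i.e.\ the five groups of Theorem \ref{gsolp1p1}. You were also right to identify the conjugacy bookkeeping as the one point that needs an argument rather than a translation.

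However, your resolution of that point fails: the claim that every subgroup of $\aut(S)$ conjugate to a member of $\mathcal F$ and containing $r$ already lies in $G_{16}$ is false, because the centralizer of $r$ in $\aut(S)$ is positive-dimensional and does not preserve the square $C$. Concretely, let $h\in\pgl_2(\dc)$ commute with $\iota\colon z\mapsto 1/z$, say $h$ corresponds to $w\mapsto\lambda w$ in the coordinate $w=\frac{z-1}{z+1}$, and let $g\colon(x,y)\mapsto(h(x),h(y))$. One checks $grg^{-1}=r$, while $g\,(-y,-x)\,g^{-1}=(m(y),m(x))$ with $m=h\circ(-\id)\circ h^{-1}$, which in the $w$-coordinate is $w\mapsto\lambda^2/w$. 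For $\lambda^4\ne1$ this $m$ is not among $\{\pm z,\pm 1/z\}$, so $G'=g\langle r,(-y,-x)\rangle g^{-1}=\langle r,(m(y),m(x))\rangle$ contains $r$, is conjugate to a non-solid group of the list $\mathcal F$, yet satisfies $G'\not\subset G_{16}$; moreover $G'$ stabilizes the square $g(C)\ne C$, so your step ``the position of $r$ relative to $C$ forces $C'=C$'' is exactly what breaks ($r$ lies in infinitely many conjugates of $N(\dt)$). The correct repair is not square-rigidity but the normalization already supplied by the paper: the lemma on toric groups conjugates the \emph{whole} group $G$ into $N(\dt)$ with $r\in G$, and the case-by-case propositions classify the subgroups of $N(\dt)$ containing $r$ with $\rho^G(S)=1$ \emph{literally}, not merely up to conjugacy --- the non-solid ones are exactly the five subgroups of $G_{16}$ containing $r$. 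So the second bullet must be read with the conjugation landing in $N(\dt)$ (equivalently: $G$ is conjugate to a group containing $r$ but not conjugate into $G_{16}$); your example-resistant existential reading over arbitrary conjugates containing $r$ is genuinely false, as the $G'$ above shows, and your proof cannot be completed in the form you propose.
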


\section{Del Pezzo surfaces of degree $6$}\label{dp6}

Up to isomorphism, there is only one smooth del Pezzo surface of degree $6$. It is obtained by blowing-up $\mathbb P^2$ in three points $P_1,P_2$ and $P_3$ in general position. We will denote by $E_i$ the exceptional curve contracted to the point $P_i$, and by $\dih_{ij}$ the proper transform of the line passing through $P_i$ and $P_j$. Recall that there is a split exact sequence \[\begin{tikzcd}
    	1 & {\dt} & {\aut(S)} & {\dih_6} & {1,}
    	\arrow[from=1-1, to=1-2]
    	\arrow[from=1-2, to=1-3]
    	\arrow["w", from=1-3, to=1-4]
    	\arrow[from=1-4, to=1-5]
    \end{tikzcd}\] where $w$ is given by the action of $\aut(S)$ on the hexagon formed by the $(-1)$-curves of $S$. The group $\dt\cong(\dc^*)^2$ is the lift in $\aut(S)$ of the diagonal automorphisms of $\pp$, which fixes $P_1$, $P_2$, and $P_3$. This subgroup will be denoted by $\dt$.

We will use the embedding of $S$ in $\pp\times\pp$ given by $xu=yv=zw$, where $(x:y:z)\times(u:v:w)$ stands for the coordinates in $\pp\times\pp$. This model is presented more extensively in \cite{theseblanc}. Explicitly, an element $(a,b)\in(\dc^*)^2$ corresponds to the map $\dps\rightarrow(x:ay:bz)\times(u:a^{-1}v:b^{-1}w)$. The maps
\begin{align}
    r&\colon \dps\mapsto(w:u:v)\times(z:x:y),\text{ and}\nonumber\\
    s&\colon \dps\mapsto(x:z:y)\times(u:w:v)\nonumber
\end{align}
generate a subgroup of $\aut(S)$ isomorphic to $\dih_6$, and the quotient $\aut(S)/\dt\cong \dih_6$ is generated by the images of $r$ and $s$. The automorphism $r$ acts on the hexagon formed by the $(-1)$-curves as an elementary rotation, and $s$ acts as a reflection of the hexagon which does not fix any vertex. We have the relations $r^6=s^2=(rs)^2=\mathrm{id}$, giving the classical presentation of $\dih_6$.

\begin{lemma}\label{conjr}
    Any element $r'\in\aut(S)$ such that $w(r')=w(r)$ is equal to $r$ up to conjugation by an element of $\dt$.
\end{lemma}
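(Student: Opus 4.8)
The plan is to exploit the split exact sequence $\aut(S) \cong \dt \rtimes \dih_6$. Since $w(r') = w(r)$ and $\ker w = \dt$, the element $r'r^{-1}$ lies in $\dt$; write $r' = t_0 r$ with $t_0 \in \dt$. Finding $t' \in \dt$ with $t' r' (t')^{-1} = r$ is then equivalent, after rearranging and using that $\dt$ is abelian, to solving the equation $t' \sigma(t')^{-1} = t_0^{-1}$, where $\sigma\colon \dt \to \dt$ denotes conjugation by $r$. Thus the whole statement reduces to showing that the homomorphism $\phi\colon \dt \to \dt$, $\phi(t') = t'\sigma(t')^{-1}$, is surjective.

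Next I would compute $\sigma$ explicitly from the coordinate formulas. Writing $t = (a,b) \in \dt$ for the automorphism $\dps \mapsto (x:ay:bz)\times(u:a^{-1}v:b^{-1}w)$, and using the given formula for $r$ together with its inverse $r^{-1}\colon \dps \mapsto (v:w:u)\times(y:z:x)$, a direct substitution gives $\sigma(a,b) = r(a,b)r^{-1} = (b, b/a)$, after rescaling the projective coordinates back to the standard diagonal form of $\dt$. Hence $\phi(a,b) = (a,b)\cdot\sigma(a,b)^{-1} = (a/b,\, a)$.

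Then I would verify surjectivity directly: for any target $(p,q) \in (\dc^*)^2$, the system $a/b = p$, $a = q$ has the solution $a = q$, $b = q/p$, so $\phi$ is in fact bijective. (Conceptually, on the cocharacter lattice $\sigma$ acts by the order-$6$ matrix $M = \left(\begin{smallmatrix} 0 & 1 \\ -1 & 1\end{smallmatrix}\right)$ with characteristic polynomial $\lambda^2 - \lambda + 1$, so $1$ is not an eigenvalue and $\det(I - M) = 1 \neq 0$, which already forces $\phi$ to be surjective.) Applying surjectivity with target $t_0^{-1}$ produces the required $t' \in \dt$, and conjugating $r'$ by $t'$ yields $r$.

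The main obstacle is the explicit conjugation computation $\sigma(a,b) = (b,b/a)$ --- i.e. getting $r^{-1}$ right and carefully renormalizing the image so that it is again in the standard form of $\dt$. Once $\sigma$ is correctly identified, the key structural point is merely that $1$ is not an eigenvalue of the order-$6$ action of $r$ on the two-dimensional torus, which is automatic and makes $\phi$ an isomorphism.
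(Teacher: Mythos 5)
Your proposal is correct --- I verified the key computations against the paper's coordinates: $r^{-1}\colon\dps\mapsto(v:w:u)\times(y:z:x)$, the conjugation $\sigma(a,b)=r(a,b)r^{-1}=(b,b/a)$ after renormalizing to the standard diagonal form, hence $\phi(a,b)=(a/b,a)$, which is visibly bijective; and solving $\phi(t')=t_0^{-1}$ for $r'=t_0r$ with $t_0=(a,b)$ recovers exactly the paper's solution $c=b^{-1}$, $d=ab^{-1}$. The difference from the paper is one of organization rather than of computational content: the paper conjugates $r'$ by a general torus element $t=(c,d)$, writes out $tr't^{-1}$ explicitly in the $\pp\times\pp$ coordinates, and reads off the two scalar equations, whereas you reduce the statement to surjectivity of the twisted coboundary map $t\mapsto t\,\sigma(t)^{-1}$ on $\dt$ and note that this map is an isomorphism because $w(r)$ acts on the cocharacter lattice by $M=\left(\begin{smallmatrix}0&1\\-1&1\end{smallmatrix}\right)$ with $\det(I-M)=1$, i.e.\ without eigenvalue $1$. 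What your framing buys is an explanation, not just a verification: it shows the analogous uniqueness-up-to-$\dt$-conjugacy holds for any element of $\aut(S)$ whose image in $\dih_6$ acts on the lattice without fixed vectors, and it correctly predicts the failure for the reflection $s$ (which has eigenvalue $1$, so $\phi_s(a,b)=(a/b,b/a)$ is not surjective) --- consistent with the paper later having to pin down elements $h$ with $w(h)=w(s)$ by separate relation-chasing in Propositions \ref{s3} and \ref{d6} rather than by conjugating them to $s$. The paper's direct computation buys brevity; your version costs a little setup but is more robust and reusable.
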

\begin{proof}
    Let $r'\in\aut(S)$ be such that $w(r')=w(r)$. Since the kernel of $w$ is the normal subgroup $\dt$ of $\aut(S)$, we have $r'=tr,$ for some $t\in\dt$. Explicitly, there exist $(a,b)\in(\dc^*)^2$ such that $r'\colon\dps\mapsto(w:au:bv)\times(z:a^{-1}x:b^{-1}y)$. Let $t\colon\dps\mapsto(x:cy:dz)\times(u:c^{-1}v:d^{-1}w)\in\dt\cong(\dc^*)^2$. We have $$tr't^{-1}\colon\dps\mapsto(w:acd^{-1}u:bcv)\times(z:a^{-1}c^{-1}dx:b^{-1}c^{-1}y).$$ Setting $c=b^{-1}$ and $d=ab^{-1}$, we get $tr't^{-1}=r$.
\end{proof}

Let $G$ be a subgroup of $\aut(S)$, such that $\rho^G(S)=1$. The classification of Sarksov links in \cite{iskovskikh1996factorization} and the fact that Bertini and Geiser involutions lead to $G$-isomorphic surfaces imply the following.

\begin{remark}\label{fix}
    There is no link of type \MakeUppercase{\romannumeral 1} starting from $S$. Hence, for $S$ not to be $G$-solid, it has to be $G$-birational to a surface $S'$, not isomorphic to $S$. The only $G$-link $S\dashrightarrow S'$ such that $S'$ is not isomorphic to $S$ is the blow-up of a point $P\in S$ which is not in the exceptional locus, followed by the contraction of three $(-1)$-curves. We obtain $S\cong\pl\times\pl$. Hence, any $G$-birational map from $S$ to a $G$-conic bundle $S''\rightarrow\pl$ must split in the following way:
\[\begin{tikzcd}
	S & \pl\times\pl & {S''.}
	\arrow[dashed, from=1-1, to=1-2]
	\arrow[dashed, from=1-2, to=1-3]
\end{tikzcd}\] 
We will often refer to the results of section \ref{dp8} to determine whether or not the surface $S$ is $G$-solid.
\end{remark}

In particular, Remark \ref{fix} implies the following lemma:

\begin{lemma}\label{notind6}
    Let $G$ be a subgroup of $\aut(S)$ such that $\rho^G(S)=1$, not isomorphic to a subgroup of $\dih_6$. Then $S$ is $G$-solid.
\end{lemma}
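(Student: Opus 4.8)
The plan is to argue by contraposition: I will show that if $S$ is \emph{not} $G$-solid, then $G$ must be isomorphic to a subgroup of $\dih_6$. The starting point is Remark \ref{fix}, which tells us that the only way for $S$ to fail to be $G$-solid is the existence of a $G$-birational map $S\dashrightarrow\pl\times\pl\dashrightarrow S''$ onto a $G$-conic bundle. In particular the first step $S\dashrightarrow\pl\times\pl$ must be a genuine $G$-link, and by the description in Remark \ref{fix} this link is the blow-up of a single point $P\in S$ lying outside the exceptional locus (the hexagon of $(-1)$-curves), followed by the contraction of three $(-1)$-curves. For this blow-up of a single point to be $G$-equivariant, the center must be a $G$-orbit of length one, i.e. $P$ must be a $G$-fixed point.

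Next I would identify the complement of the hexagon of $(-1)$-curves with the open dense orbit of the torus $\dt\cong(\dc^*)^2$ in $S$. Indeed, in the model $S\subset\pp\times\pp$ given by $xu=yv=zw$, the boundary divisor $S\setminus\dt$ is exactly the hexagon formed by the $(-1)$-curves, while $\dt$ acts on its open orbit by translation, hence freely and transitively. Consequently, the fixed point $P$ produced in the previous paragraph lies in this open orbit.

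The key step is then the following observation. Let $T=G\cap\dt$ be the toric part of $G$. Since $T\subset\dt$ fixes $P$ and $\dt$ acts freely on its open orbit, we must have $T=\{\id\}$. But $T$ is precisely the kernel of the restriction $w|_G\colon G\to\dih_6$ coming from the split exact sequence $1\to\dt\to\aut(S)\to\dih_6\to1$. Hence $w|_G$ is injective, so $G$ is isomorphic to its image $W=w(G)\subseteq\dih_6$, a subgroup of $\dih_6$. This contradicts the hypothesis that $G$ is not isomorphic to a subgroup of $\dih_6$, and therefore $S$ is $G$-solid.

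I expect the only delicate point to be the geometric identification in the second paragraph---confirming that the complement of the exceptional locus is exactly the open torus orbit and that $\dt$ acts freely there---but this is standard for the toric surface $S$ and follows directly from the description of $\dt$ as the lift of the diagonal torus of $\pp$. Everything else reduces to a short group-theoretic deduction from the exact sequence, combined with Remark \ref{fix}.
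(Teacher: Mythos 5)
Your proof is correct and follows essentially the same route as the paper: a $G$-fixed point in general position forces the toric part $G\cap\dt$ to be trivial (since $\dt$ acts freely on the complement of the hexagon), so $w$ embeds $G$ into $\dih_6$. The only nuance the paper adds is that, because self-links of $S$ may precede the link to $\pl\times\pl$ in the Sarkisov decomposition, the fixed point is a priori fixed only by a subgroup $G'$ of $\aut(S)$ birationally conjugated to $G$; as $G'\cong G$ abstractly, this does not affect the conclusion, and your argument goes through.
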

\begin{proof}
    Assume that $S$ is not $G$-solid. Then, by Remark \ref{fix}, there exists a subgroup $G'$ of $\aut(S)$ birationally conjugated to $G$, which fixes a point $P$ in general position. But in this case, $G'\cap\dt=\mathrm{id}$, so that $G'$ is mapped isomorphically by $w$ to a subgroup of $\dih_6$.
\end{proof}

\begin{lemma}\label{necconddp6}
    If $S$ is a $G$-del Pezzo surface, then the image of $G$ by $w$ in $\dih_6$ must contain the subgroup of $\dih_6$ isomorphic to $\dz_6$, or the subgroup isomorphic to $\st$ acting transitively on the $(-1)$-curves of $S$.
\end{lemma}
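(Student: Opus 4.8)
The plan is to translate the condition $\rho^G(S)=1$ into a vanishing statement for an invariant subspace and then read off the possible images $W:=w(G)\subseteq\dih_6$. Since the torus $\dt\cong(\dc^*)^2$ is connected, it acts trivially on the discrete group $\pic(S)$; hence the $G$-action on $\pic(S)$ factors through $W$, and $\rho^G(S)=\rho^W(S)=\dim\big(\pic(S)\otimes\mathbb Q\big)^W$. The class $K_S$ is always invariant and $K_S^2=6\neq 0$, so $\pic(S)\otimes\mathbb Q=\langle K_S\rangle\perp V$ is a $W$-invariant orthogonal decomposition, where $V:=K_S^{\perp}\otimes\mathbb Q$ is $3$-dimensional. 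Therefore the invariants split, $\big(\pic(S)\otimes\mathbb Q\big)^W=\langle K_S\rangle\oplus V^W$, and the whole statement reduces to showing that $V^W=0$ forces $W$ to contain $\langle r\rangle\cong\dz_6$ or the copy of $\st$ generated by $r^2$ and $s$.

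Next I would decompose $V$ as a $\dih_6$-representation. Writing $\pic(S)=\langle\ell,e_1,e_2,e_3\rangle$ with the $(-1)$-curves $E_i=e_i$ and $L_{ij}=\ell-e_i-e_j$, the classes $e_i-e_j$ span an $A_2$-summand $V_2$ and the class $\ell-e_1-e_2-e_3$ spans an $A_1$-summand $V_1$; these are orthogonal $\dih_6$-subrepresentations with $V=V_2\oplus V_1$. Using $\dih_6\cong\st\times\dz_2$ with central factor $\langle r^3\rangle$, a direct computation of the action on the curve classes gives: $r^3$ is trivial on $V_2$ and equals $-1$ on $V_1$; $r^2$ acts as an order-$3$ rotation on $V_2$ and trivially on $V_1$; and $s$ (the reflection fixing no vertex of the hexagon, hence exchanging the $E_i$ with the $L_{jk}$) acts as a reflection on $V_2$ and as $-1$ on $V_1$. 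Thus $V_2$ is the standard $2$-dimensional representation of the $\st$-factor $\langle r^2,s\rangle$, and $V_1$ is the linear character $\chi$ with $\chi(r)=\chi(s)=-1$ and $\chi(r^2)=1$, whose kernel is exactly the second (non-transitive) copy $\langle r^2,sr\rangle\cong\st$.

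From $V^W=V_2^W\oplus V_1^W$ I would record two vanishing criteria. First, the standard representation of $\st$ has no nonzero invariants under a subgroup precisely when that subgroup contains a $3$-cycle; pulling this back through $\dih_6\to\st$ yields $V_2^W=0\iff r^2\in W$. Second, $V_1^W=0\iff W\not\subseteq\ker\chi=\langle r^2,sr\rangle$, i.e. $W$ contains an element on which $\chi=-1$. Hence $\rho^G(S)=1$ is equivalent to the conjunction ($r^2\in W$) and ($W\not\subseteq\langle r^2,sr\rangle$). To conclude, choose $g\in W$ with $\chi(g)=-1$, so $g\in\{r,r^3,r^5,s,sr^2,sr^4\}$: if $g\in\{r,r^3,r^5\}$ then, together with $r^2\in W$, one gets $r\in W$ and so $\langle r\rangle\cong\dz_6\subseteq W$; if $g$ is one of the reflections $s,sr^2,sr^4$, then since $r^2\in W$ we have $\langle r^2,g\rangle=\langle r^2,s\rangle$, the transitive $\st$, contained in $W$. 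This is exactly the claimed alternative.

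The main obstacle I anticipate is bookkeeping rather than conceptual. One must pin down which of the two $\st$-subgroups of $\dih_6$ acts transitively and match it with $\langle r^2,s\rangle$: the vertex-fixing reflections preserve the partition $\{E_i\}\sqcup\{L_{jk}\}$ and give two orbits of length $3$, whereas $s$ swaps the two triples and so $\langle r^2,s\rangle$ acts transitively (indeed regularly) on the six $(-1)$-curves. One must also check carefully that $V_2$ and $V_1$ are genuine $\dih_6$-subrepresentations, so that the invariants split as a direct sum, and verify the explicit action of $r,r^2,r^3,s$ on the curve classes used to identify the sign character $\chi$ and the standard representation. The representation-theoretic inputs (no invariants for the standard $\st$-representation unless a $3$-cycle is present, and the value $\chi(s)=-1$) are elementary once these explicit actions are in hand.
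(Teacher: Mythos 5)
Your proof is correct, and it takes a genuinely different route from the paper's. The paper argues the contrapositive geometrically: if $W=w(G)$ contains neither $\langle r\rangle\cong\dz_6$ nor the transitive copy of $\st$, then $W$ does not act transitively on the six $(-1)$-curves (tacitly using that the transitive subgroups of $\dih_6$ on the hexagon are exactly $\dih_6$, $\dz_6$ and the edge-type $\st$), and a subgroup-by-subgroup check shows that one of the divisors $E_1+E_2+E_3$, $\dih_{12}+\dih_{23}+\dih_{13}$, or $E_i+\dih_{jk}$ is then $G$-invariant; contracting it via Castelnuovo's criterion gives a $G$-birational morphism to $\pp$ or $\pl\times\pl$, contradicting $\rho^G(S)=1$. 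You instead linearize: since the torus is connected it acts trivially on $\pic(S)$, the action factors through $W$, and you decompose $K_S^\perp\otimes\mathbb Q$ as the orthogonal sum of the $A_2$-part (the standard representation of $\dih_6/\langle r^3\rangle\cong\st$) and the $A_1$-part spanned by $\ell-e_1-e_2-e_3$ (the character $\chi$ with $\chi(r)=\chi(s)=-1$, whose kernel is the non-transitive $\st$), obtaining the clean criterion $\rho^G(S)=1\iff\bigl(r^2\in W\text{ and }W\not\subseteq\langle r^2,sr\rangle\bigr)$, from which the dichotomy follows by a short enumeration. I verified the computational inputs ($r^3$ acts as $-1$ and $r^2$ trivially on the $A_1$-part, $\chi=-1$ on the edge reflections, $V_2^W=0\iff r^2\in W$, and $\langle r^2,s\rangle$ is indeed the copy of $\st$ acting transitively, even regularly, on the hexagon) and they are all right. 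As for what each approach buys: yours proves strictly more, namely an if-and-only-if characterization, avoids any case check of invariant divisors, and makes the two branches of the conclusion conceptually transparent (the two ways of making $\chi$ nontrivial while containing $r^2$); the paper's version, though weaker as a statement, produces as a byproduct the explicit $G$-invariant collections of disjoint $(-1)$-curves and the resulting equivariant contractions to $\pp$ or $\pl\times\pl$, which is precisely the geometric picture the rest of the section trades on when it shuttles between $S$, $\pp$, and $\pl\times\pl$.
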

\begin{proof}
    Assume that $G$ does not act transitively on the $(-1)$-curves of $S$. Just straightforwardly checking all the possible subgroups of $\dih_6$, one can check that one of the divisors $E_1+E_2+E_3$, or $\dih_{12}+\dih_{23}+\dih_{13}$, or $E_i+E_{jk}$, with $j,k\ne i$, is invariant by $G$. In all those cases, by Castelnuovo's contractibility criterion, there exists a $G$-birational morphism either to $\pp$ or to $\pl\times\pl$.
\end{proof}

\begin{corollary}
    Let $G$ be a subgroup of $\aut(S)$ such that $\rho^G(S)=1$. Then $G$ is not isomorphic to any of the groups $\dz_2^2,\dz_3,\dz_2,\{\id\}$.
\end{corollary}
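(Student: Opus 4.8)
The plan is to read the result off directly from the necessary condition established in Lemma \ref{necconddp6}. That lemma guarantees that whenever $\rho^G(S)=1$, the image $w(G)\subset\dih_6$ must contain a copy of $\dz_6$ or a copy of $\st$ acting transitively on the $(-1)$-curves of $S$; in either case $w(G)$ contains a subgroup of order $6$.

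First I would record the elementary group-theoretic fact underlying everything. Since $w\colon\aut(S)\to\dih_6$ is a group homomorphism, its restriction to $G$ has image $w(G)\cong G/(G\cap\dt)$, so $|w(G)|$ divides $|G|$. Combining this with Lemma \ref{necconddp6} and Lagrange's theorem, the order $6$ of the guaranteed subgroup of $w(G)$ divides $|w(G)|$, and hence divides $|G|$. In particular $|G|\ge 6$.

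It then suffices to compare orders. The four groups to be excluded are $\dz_2^2$, $\dz_3$, $\dz_2$, and $\{\id\}$, of orders $4$, $3$, $2$, and $1$ respectively, each strictly less than $6$. Hence none of them can occur as $G$, which is precisely the claim.

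There is no serious obstacle here: the corollary is an immediate numerical consequence of Lemma \ref{necconddp6}. The only point requiring a one-line check is that both subgroups singled out in that lemma, namely $\dz_6$ and $\st\cong\mathfrak S_3$, genuinely have order $6$, so that the divisibility argument applies uniformly in both cases.
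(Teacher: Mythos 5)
Your proof is correct and is precisely the argument the paper intends: the corollary appears with no written proof as an immediate consequence of Lemma \ref{necconddp6}, and your deduction — $w(G)\cong G/(G\cap\dt)$ contains a subgroup of order $6$, so $6$ divides $|w(G)|$ and hence $|G|$, excluding the four groups of order at most $4$ — is the evident intended route. No gaps.
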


The only remaining groups of interest are $\dih_6$, $\dz_6$, and $\st$. Let us start with the case of $\st$.

\begin{proposition}\label{s3}
	Let $G$ be a subgroup of $\aut(S)$ isomorphic to $\st$ and such that $\rho^G(S)=1$. Then, up to conjugation in $\aut(S)$, the group $G$ is generated by $\dps\mapsto(y:z:x)\times(v:w:u)$ and $\dps\mapsto(w:v:u)\times(z:y:x)$. Moreover, the surface $S$ is not $G$-solid.
\end{proposition}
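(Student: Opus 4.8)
The plan is to first determine $G$ up to conjugacy from the split exact sequence $1\to\dt\to\aut(S)\xrightarrow{w}\dih_6\to1$, and then to produce a $G$-fixed point off the $(-1)$-curves so as to reduce the solidity question to $\pl\times\pl$ through Remark \ref{fix}. Write $g_1\colon\dps\mapsto(y:z:x)\times(v:w:u)$ and $g_2\colon\dps\mapsto(w:v:u)\times(z:y:x)$ for the two asserted generators; a direct computation gives $g_1=r^2$ and $g_2=sr$.

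For the classification I would begin with the abelian normal subgroup $T=G\cap\dt$. Since $\st$ has only $\{\id\}$ and $\dz_3$ as abelian normal subgroups, either $T=\{\id\}$ or $T\cong\dz_3$; in the second case $w(G)\cong\dz_2$ is too small to contain $\dz_6$ or a transitive copy of $\st$, contradicting Lemma \ref{necconddp6}. Hence $T$ is trivial, $w$ restricts to an isomorphism onto an order-$6$ subgroup of $\dih_6$, and since $w(G)\cong\st\not\cong\dz_6$ this is one of the two dihedral subgroups of order $6$. Lemma \ref{necconddp6} forces $w(G)$ to act transitively on the hexagon of $(-1)$-curves, and a direct check identifies the transitive copy as $\langle w(r^2),w(sr)\rangle$: the element $r^2$ permutes $E_1,E_2,E_3$ cyclically and thus preserves the two classes $\{E_i\}$ and $\{\dih_{ij}\}$, whereas $sr$ sends $E_1$ to $\dih_{12}$ and so interchanges them. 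Thus $G=\langle a,b\rangle$ with $w(a)=w(r^2)$, $w(b)=w(sr)$, satisfying $a^3=b^2=(ab)^2=\id$.

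The main step is to normalize these lifts, mimicking Lemma \ref{conjr}. Let $\sigma$ denote the automorphism of $\dt$ induced by conjugation by $r^2$. Writing $a=t\,r^2$ with $t\in\dt$, conjugation by $u\in\dt$ replaces $t$ by $u\,\sigma(u)^{-1}t$, so $a$ is $\dt$-conjugate to $r^2$ provided $1-\sigma$ is surjective on $\dt$. Now $r^2$ acts on the character lattice of $\dt$, which is the root lattice $A_2$, as an order-$3$ rotation with no nonzero fixed vector; identifying $A_2\cong\dz[\zeta_3]$, the cokernel of $1-\sigma$ on $\dt$ is $\mathbb F_3\otimes_{\dz}\tor=\tor/(\tor)^3$, which is trivial because $\tor$ is divisible. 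Hence $1-\sigma$ is surjective and we may assume $a=r^2$. The remaining freedom is conjugation by the finite subgroup $\dt^\sigma\cong\dz_3$. Writing $b=t'\,sr$, the relations $b\,a\,b^{-1}=a^{-1}$ and $b^2=\id$ force $t'\in\dt^\sigma$ and $t'\,\sigma'(t')=\id$, where $\sigma'$ is the action of $sr$; this leaves only finitely many possibilities, each conjugate to $sr$ by an element of $\dt^\sigma$. Therefore $G$ is $\dt$-conjugate to $\langle r^2,sr\rangle=\langle g_1,g_2\rangle$, as claimed.

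For the final assertion I would check that $P=(1:1:1)\times(1:1:1)$ lies on $S$, since $xu=yv=zw$ reads $1=1=1$, that it is fixed by both $g_1$ and $g_2$, and that all its coordinates are nonzero, so it avoids the $(-1)$-curves, which lie in the boundary hexagon. By Remark \ref{fix}, blowing up $P$ and contracting three $(-1)$-curves is a $G$-link to $\pl\times\pl$, carrying $G$ isomorphically to a subgroup $G'\cong\st$ with $\rho^{G'}(\pl\times\pl)=1$. Since $|G'|=6$, it has no subgroup isomorphic to $\aq$, so by Proposition \ref{aqnottor} it is toric, and it has no element of order $4$, hence is not conjugate to a subgroup containing the order-$4$ automorphism $r=(\frac1y,x)$ of $\aut(\pl\times\pl)$; the first case of Theorem \ref{gsolp1p1} then shows $\pl\times\pl$ is not $G'$-solid. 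Composing a $G'$-birational map to a $G'$-conic bundle with the link exhibits a $G$-birational map from $S$ to a conic bundle, proving that $S$ is not $G$-solid. The one delicate point in the whole argument is the normalization of the third paragraph: once $a$ is fixed only the finite centralizer $\dt^\sigma$ remains available, and one must invoke the divisibility of $\tor$ precisely to guarantee that $a$ could be brought to $r^2$ in the first place.
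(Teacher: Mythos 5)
Your proposal is correct in outcome and follows the same skeleton as the paper's proof (force $w|_G$ injective via Lemma \ref{necconddp6}, identify $w(G)$ as the transitive copy of $\st$, normalize the two lifts by torus conjugation, then exhibit a fixed point off the hexagon and pass to $\pl\times\pl$), but it differs in two genuine ways. First, your normalization of the order-three lift $a$ is done cohomologically: you show $u\mapsto u\,\sigma(u)^{-1}$ is surjective on $\dt$ because $1-\sigma$ has nonzero determinant (namely $\pm 3$) on the character lattice and $\tor$ is divisible. The paper instead invokes Lemma \ref{conjr} -- which is literally stated only for elements over $w(r)$, not $w(r^2)$ -- so your argument actually supplies the justification the paper glosses over. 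Second, your endgame blows up only $P=(1:1:1)\times(1:1:1)$ and then concludes via Proposition \ref{aqnottor} (no $\aq$-subgroup, so $G'$ is toric) together with the first case of Theorem \ref{gsolp1p1} ($\st$ has no element of order $4$, so $G'$ is not conjugate to a group containing $r$); this is valid and non-circular since section \ref{dp8} precedes, but it is heavier than the paper's route, which observes that \emph{all three} points $(1:\mu^{2k}:\mu^k)\times(1:\mu^k:\mu^{2k})$, $k=0,1,2$, are $G$-fixed and in general position, so that after the link the image group fixes two points of $\pl\times\pl$ in general position and Example \ref{2fixed} gives an explicit $G$-birational map to the conic bundle $\mathbb F_1$.

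One step of yours is under-justified, and the mechanism you invoke for it would not work as stated. In normalizing $b=t'\,sr$ you correctly derive $t'\in\dt^{\sigma}$ and $t'\,\sigma'(t')=\id$, but then assert that the finitely many solutions are ``each conjugate to $sr$ by an element of $\dt^{\sigma}$.'' In fact $\dt^{\sigma}=\{(\omega,\omega^{2}):\omega^{3}=1\}\cong\dz_3$ is fixed pointwise by $\sigma'$ (one computes $\sigma'(a,b)=(ba^{-1},b)$, so $\sigma'(\omega,\omega^{2})=(\omega,\omega^{2})$), hence $\dt^{\sigma}$ centralizes both $r^{2}$ and $sr$, and conjugation by it fixes every candidate pair $(r^{2},t'\,sr)$ -- it can never identify distinct solutions. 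Your conclusion survives only because the equations themselves leave a single solution: since $\sigma'$ is the identity on $\dt^{\sigma}$, the relation $t'\,\sigma'(t')=\id$ reads $t'^{2}=\id$, and an element of order dividing both $2$ and $3$ is trivial, so $b=sr$ on the nose (this matches the paper, which pins down $h$ by the explicit computations $b=1$ and $a=1$). Replace the conjugacy assertion by this two-line computation and the proof is complete.
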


\begin{proof}
    If the restriction of $w$ to $G$ is not injective, then the image of $G$ by $w$ is either isomorphic to $\dz_3$, isomorphic to $\dz_2$, or trivial. By Lemma \ref{necconddp6}, it implies that $\rho^G(S)=1$, contradicting our assumption. Hence $w(G)$ is isomorphic to $\st$ and acts transitively on the $(-1)$-curves, by Lemma \ref{necconddp6}. We deduce that the group $G$ is generated by an element $g$ such that $w(g)=w(r^2)$, and an element $h$ such that $w(h)=w(rs)$. Geometrically, $rs$ acts on the hexagon formed by the $(-1)$-curves as a reflection which fixes two opposite vertices. By Lemma \ref{conjr}, we have $g\colon\dps\mapsto(y:z:x)\times(v:w:u)$ up to conjugation by an element of the torus. Since $\ker(w)=\dt$, $h=trs$ for some $t\in\dt$. Explicitly, $h$ is of the form $\dps\mapsto(w:av:bu)\times(z:a^{-1}y:b^{-1}x)$, for some $(a,b)\in(\dc^*)^2$. We get $h^2\colon\dps\mapsto(b^{-1}x:y:bz)\times(bu:v:b^{-1}w)$, and knowing that $\ord(h)=2$, we deduce that $b=1$. Moreover, $hgh\colon\dps\mapsto(a^{-1}y:az:x)\times(av:a^{-1}w:u)$, but relations in $\dih_6$ imply that $hgh=g^{-1}\colon\dps\mapsto(y:z:x)\times(v:w:u)$. Hence, $a=1$, so that $h\colon\dps\mapsto(w:v:u)\times(z:y:x)$. The three points of the form $(1:\mu^{2k}:\mu^k)\times(1:\mu^k:\mu^{2k})$ are in general position and fixed by the action of $G$. Blowing up one of them, we get a $G$-link to the surface $\pl\times\pl$ with two fixed points on it in general position. Hence, we are in the case of Example \ref{2fixed}, so that $S$ is $G$-birational to the $G$-conic bundle $\mathbb F_1$.
\end{proof}

\begin{proposition}\label{d6}
    If $G$ is a subgroup of $\aut(S)$ isomorphic to $\dih_6$ such that $\rho^G(S)=1$, then, up to conjugation in $\aut(S)$, the group $G$ is generated by $\dps\mapsto(w:u:v)\times(z:x:y)$ and $\dps\mapsto(x:z:y)\times(u:w:v)$. Moreover, the surface $S$ is not $G$-solid.
\end{proposition}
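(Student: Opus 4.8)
The plan is to mirror the proof of Proposition~\ref{s3} almost verbatim, since $\dih_6$ contains the $\st$ treated there and the structure of the argument is identical. First I would use Lemma~\ref{necconddp6} (and its corollary) to fix the image of $G$ in $\dih_6$: since $\rho^G(S)=1$ forces transitivity on the $(-1)$-curves, and $G\cong\dih_6$ has no proper subgroup that could fail to be distinguished from its image, the restriction of $w$ to $G$ must be injective, so $w(G)=\dih_6$ and $G$ is generated by an element $g$ with $w(g)=w(r)$ and an element $h$ with $w(h)=w(s)$.

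Next I would normalize the generators. By Lemma~\ref{conjr}, after conjugating by a torus element we may take $g=r\colon\dps\mapsto(w:u:v)\times(z:x:y)$ exactly. Since $\ker(w)=\dt$, the second generator is $h=ts$ for some $t=(a,b)\in\dt$, so explicitly $h\colon\dps\mapsto(x:a^{-1}z:b^{-1}y)\times(u:aw:bv)$ (or the analogous form dictated by how $\dt$ composes with $s$). Then I would impose the defining relations of $\dih_6$, namely $h^2=\id$ and $hgh=g^{-1}$, to pin down $a$ and $b$. This is a short computation of the same flavour as in Proposition~\ref{s3}: computing $h^2$ forces one coordinate constraint, and computing $hgh$ and comparing with $g^{-1}=r^{-1}\colon\dps\mapsto(v:w:u)\times(y:z:x)$ forces the remaining constraint, yielding $a=b=1$ and hence $h=s\colon\dps\mapsto(x:z:y)\times(u:w:v)$. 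This establishes the claimed normal form.

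For the non-$G$-solidity, I would exhibit a $G$-fixed point in general position and invoke Remark~\ref{fix} together with Example~\ref{2fixed}. Concretely, since $\st\subset\dih_6$ and the $\st$ of Proposition~\ref{s3} already fixes the three points $(1:\mu^{2k}:\mu^k)\times(1:\mu^k:\mu^{2k})$ in general position, I would check that the full $G=\dih_6$ still fixes such a point (the extra generator $r$ permutes these three points cyclically, so I should instead locate a point fixed by all of $\dih_6$, or equivalently a $\dih_6$-orbit of length one). Once a genuinely $G$-fixed point outside the exceptional locus is found, blowing it up gives a $G$-link to $\pl\times\pl$ carrying a $G$-fixed point, placing us in the situation of Example~\ref{2fixed}, whence $S$ is $G$-birational to the $G$-conic bundle $\mathbb F_1$ and therefore not $G$-solid.

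The main obstacle I anticipate is the last step: unlike the $\st$ case, the full $\dih_6$ acts with the rotation $r$ permuting the natural triple of points, so a single fixed point may not be visible among the obvious coordinate points. The hard part will be identifying a $\dih_6$-fixed point in general position (equivalently, checking that the $\dih_6$-action has an orbit of length~$1$ outside the hexagon), or alternatively producing the required $G$-link directly through the $\pl\times\pl$ model using the classification of section~\ref{dp8}; the relation-chasing to normalize $g$ and $h$ is routine by comparison.
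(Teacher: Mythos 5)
Your second and third steps (normalizing $g=r$ via Lemma~\ref{conjr} and pinning down $h=s$ by relation-chasing) match the paper's proof, but your first step contains a genuine gap: the injectivity of $w|_G$ is not automatic, and the justification you give (``$\dih_6$ has no proper subgroup that could fail to be distinguished from its image'') is not an argument. Since $\dih_6\cong\dz_2\times\st$ has a central $\dz_2$, the case $w(G)\cong\st$ with kernel $T=G\cap\dt\cong\dz_2$ is perfectly consistent with Lemma~\ref{necconddp6}: the quotient $\st$ can be exactly the copy acting transitively on the hexagon, so transitivity alone does not force $w(G)=\dih_6$. The paper rules this case out with a real argument: the extension $1\to\dz_2\to G\to\st\to 1$ splits, and $T\subset\dt$ acts trivially on $\pic(S)$, so $G$ would contain a subgroup $H\cong\st$ with $\rho^H(S)=1$; by Proposition~\ref{s3} such an $H$ is, up to conjugation by the torus, the explicit group written there, and a direct check shows no order-$2$ element of $\dt$ commutes with it --- contradicting the fact that $T$, being normal of order $2$, is central in $G\cong\dz_2\times\st$. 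Without this exclusion your normal form $G=\left<r,s\right>$ is not established.

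The step you flag as the anticipated ``hard part'' is in fact immediate, and your fallback is not what the paper does. Once $G=\left<r,s\right>$ is in hand, the unique fixed point of the order-$6$ rotation $r$ is $P=(1:1:1)\times(1:1:1)$, and $s$ visibly fixes $P$ as well; so $G$ fixes $P$, which lies outside the hexagon of $(-1)$-curves, and Remark~\ref{fix} gives a $G$-link from $S$ to $\pl\times\pl$ centered at $P$. For the conclusion, the paper does not try to verify the two-fixed-points configuration of Example~\ref{2fixed} for the induced action (which would require identifying the image group in $\aut(\pl\times\pl)$ explicitly, as you note); instead it invokes Theorem~\ref{gsolp1p1}: a subgroup $G'\cong\dih_6$ of $\aut(\pl\times\pl)$ contains no element of order $4$, hence no conjugate of $(\frac{1}{y},x)$, and no copy of $\aq$, so $\pl\times\pl$ is never $G'$-solid, and therefore $S$ is not $G$-solid. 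This citation closes the argument uniformly, whereas your route via Example~\ref{2fixed} would still need the extra computation you yourself identify as missing.
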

\begin{proof}
    Assume that $G\cong \dih_6$. Going through the possible quotients of $\dih_6$ and combining with Lemma \ref{necconddp6}, we see that either $w(G)=\dih_6$, or $w(G)=\st$. Let us exclude the latter case, in which $G\cap\ker(w)=T\cong\dz_2$. Since the only extension of the form
    \[\begin{tikzcd}
    	1 & {\dz_2} & {G} & {\st} & {1}
    	\arrow[from=1-1, to=1-2]
    	\arrow[from=1-2, to=1-3]
    	\arrow[from=1-3, to=1-4]
    	\arrow[from=1-4, to=1-5]
    \end{tikzcd}\]
    splits, and since $\dz_2$ acts trivially on the Picard group of $S$, we deduce that the group $G$ has a subgroup $H$ isomorphic to $\st$ such that $\pic^H(S)=1$. But such group is given explicitly in Lemma \ref{s3}, and we see that it cannot commute with a subgroup of $\dt$ isomorphic to $\dz_2$. Hence we get $G\cong w(G)=\dih_6$, and the group $G$ is generated by an element $g$ such that $w(g)=w(r)$, and an element $h$ such that $w(h)=w(s)$. By Lemma \ref{conjr}, the automorphism $g$ it conjugated to $r$ by an element of $\dt$, and its unique fixed point is $P=(1:1:1)\times(1:1:1)$. The isomorphism $h$ is of the form $\dps\mapsto(x:az:bz)\times(u:a^{-1}w:b^{-1}v)$. Since $\ord(h)=2$, be get $b=a^{-1}$. Moreover, $hr^2h\colon\dps\mapsto(az:x:a^{-2}y)\times(a^{-1}w:u:a^2v)$. But the structure of $\dih_6$ implies that $hr^2h=r^{-2}\colon\dps\mapsto(z:x:y)\times(w:u:v)$, so that $a=1$. Hence, $h=s\colon\dps\mapsto(x:z:y)\times(u:w:v)$. This automorphism also fixes $P$, so that, as described in Remark \ref{fix}, there exists a $G$-link from $S$ to $\pl\times\pl$ centered at $P$. By Theorem \ref{gsolp1p1}, the surface $\pl\times\pl$ is not $G'$-solid for any subgroup $G'\subset\aut(\pl\times\pl)$ isomorphic to $\dih_6$. We conclude that $S$ is not $G$-solid.
\end{proof}

\begin{proposition}\label{hsol}
    Let $G$ be a subgroup of $\aut(S)$ isomorphic to $\dz_6$ and such that $\rho^G(S)=1$. Then, up to conjugation in $\aut(S)$, the group $G$ is generated by $\dps\mapsto(w:u:v)\times(z:x:y)$. Moreover,the surface $S$ is not $G$-solid.
\end{proposition}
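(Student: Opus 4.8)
The plan is to follow the template already established in Propositions \ref{s3} and \ref{d6}: first pin down the conjugacy class of $G$ in $\aut(S)$ using the split exact sequence and Lemma \ref{conjr}, then exhibit a fixed point in general position and invoke Remark \ref{fix} to pass to $\pl\times\pl$, where the degree-8 classification settles $G$-solidity. First I would analyze the image $w(G)\subset\dih_6$. Since $G\cong\dz_6$ is cyclic, either $w$ restricts to an isomorphism on $G$, or its kernel $T=G\cap\dt$ is nontrivial. By Lemma \ref{necconddp6}, $w(G)$ must contain the $\dz_6$ subgroup of $\dih_6$ (a cyclic group of order $6$ cannot surject onto $\st$ with abelian kernel meeting the transitivity requirement, and the subgroup $\st$ of $\dih_6$ acting transitively is nonabelian, so it is not a quotient of $\dz_6$). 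Hence $w(G)\cong\dz_6$, which forces $w$ to be injective on $G$, so $T$ is trivial and $G$ maps isomorphically to the rotation subgroup $\left<w(r)\right>$ of $\dih_6$.

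Next I would normalize the generator. The group $G$ is generated by an element $g$ with $w(g)=w(r)$, since $w(r)$ generates the $\dz_6$ in $\dih_6$. By Lemma \ref{conjr}, up to conjugation by an element of $\dt$ we may take $g=r\colon\dps\mapsto(w:u:v)\times(z:x:y)$, which gives the asserted generator. This is the step where all the real content lives, but it is entirely handled by Lemma \ref{conjr}, so no computation is needed beyond identifying the fixed point.

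Finally I would produce the link. The automorphism $r$ has the unique fixed point $P=(1:1:1)\times(1:1:1)$, exactly as recorded in the proof of Proposition \ref{d6}, and $P$ lies outside the exceptional locus (the hexagon of $(-1)$-curves). By Remark \ref{fix}, the blow-up of $P$ followed by the contraction of three $(-1)$-curves is a $G$-link $S\dashrightarrow\pl\times\pl$. It then suffices to check that the image group $G'\subset\aut(\pl\times\pl)$ is a toric $\dz_6$ that falls into the non-$G$-solid list of Theorem \ref{gsolp1p1}: since $G'\cong\dz_6$ is cyclic, it contains no subgroup isomorphic to $\aq$, so by Proposition \ref{aqnottor} it is toric, and a toric cyclic group of order $6$ cannot contain $r=(\frac1y,x)$ (which has order $4$) nor satisfy the degree-8 $G$-solidity criterion, whence $\pl\times\pl$ is not $G'$-solid. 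I expect the only mild obstacle to be the bookkeeping needed to confirm that the resulting $G'$ lands in the first (non-$r$-containing) case of Theorem \ref{gsolp1p1} rather than verifying it directly on $\pl\times\pl$; invoking Proposition \ref{aqnottor} sidesteps any explicit orbit computation, which is precisely the advantage the paper builds up to.
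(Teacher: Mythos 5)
Your proposal is correct, and its first two steps (injectivity of $w|_G$ via Lemma \ref{necconddp6}, then normalization of the generator to $r$ via Lemma \ref{conjr}) coincide exactly with the paper's proof. Where you diverge is the conclusion: the paper does not run the link to $\pl\times\pl$ at all for this case. Instead it observes that $G=\left<r\right>$ sits inside the subgroup $G'=\left<r,s\right>\cong\dih_6$ of $\aut(S)$, and since $S$ is not $G'$-solid by Proposition \ref{d6}, it is a fortiori not $G$-solid --- any $G'$-birational map to a $G'$-conic bundle is in particular $G$-equivariant, and non-solidity passes down to subgroups. Your route is also valid but longer: you blow up the fixed point $P=(1:1:1)\times(1:1:1)$, pass to $\pl\times\pl$ via Remark \ref{fix}, and then classify the image $G'\cong\dz_6$ using Proposition \ref{aqnottor} (no $\aq$ subgroup, hence toric) and the order-$4$ obstruction to containing $r=(\frac{1}{y},x)$, landing in the non-solid case of Theorem \ref{gsolp1p1}. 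This is sound, with one point you leave implicit that deserves a sentence: Theorem \ref{gsolp1p1} assumes $\rho^{G'}(\pl\times\pl)=1$, which here holds because the type \MakeUppercase{\romannumeral 2} Sarkisov link of Remark \ref{fix} terminates by definition at a $G$-Mori fibre space (and even if it did not, $\rho^{G'}>1$ would directly yield a conic bundle by running the $G$-MMP, so no real gap). What each approach buys: the paper's subgroup trick is a one-line reduction that avoids any bookkeeping on $\pl\times\pl$, at the cost of depending on Proposition \ref{d6}; your argument is independent of the $\dih_6$ case and nicely showcases Proposition \ref{aqnottor} as a toricity criterion, which is exactly the role the paper designed it for.
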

\begin{proof}
    First, notice that the restriction of $w$ to $G$ is injective. Indeed, if not, then the image of $G$ by $w$ is either isomorphic to $\dz_3$, isomorphic to $\dz_2$, or trivial. By Lemma \ref{necconddp6}, it implies that $\rho^G(S)>1$, contradicting our assumption. The group $G$ is then generated by an element $g$ such that $w(g)=w(r)$. By Lemma \ref{conjr}, the automorphism $g$ is conjugated to up to $r\colon\dps\mapsto(w:u:v)\times(z:x:y)$ by an element of $\dt$. The only fixed point of $r$ on $S$ is $(1:1:1)\times(1:1:1)$. In particular, there is a subgroup $G'$ of $\aut(S)$ containing $G$ and isomorphic to $\dih_6$. Since $S$ is not $G'$-solid by Proposition \ref{d6}, the surface $S$ is not $G$-solid either.
\end{proof}



Summing up the results of this section, we get the following.

\begin{theorem}
    Let $G$ be a subgroup of $\aut(S)$ such that $\rho^G(S)=1$. Then $S$ is $G$-solid if and only if $G$ is not isomorphic to $\dz_6$, $\st$, or $\dih_6$.
\end{theorem}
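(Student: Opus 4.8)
The plan is to assemble the final theorem as a direct corollary of the three $G$-solidity computations established earlier in this section, together with the necessary-condition lemma that rules out every remaining isomorphism class. First I would invoke the corollary to Lemma \ref{necconddp6}, which already eliminates $\dz_2^2$, $\dz_3$, $\dz_2$, and the trivial group: for any $G$ with $\rho^G(S)=1$, the image $w(G)$ must contain either the $\dz_6$-subgroup of $\dih_6$ or the transitive $\st$-subgroup. Combined with Lemma \ref{notind6}, which asserts that $S$ is $G$-solid whenever $G$ is \emph{not} isomorphic to a subgroup of $\dih_6$, this shows that the only abstract groups for which $S$ can fail to be $G$-solid are the three groups $\dz_6$, $\st$, and $\dih_6$ themselves.

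Next I would dispatch the two directions of the equivalence separately. For the ``only if'' direction, I would note that Propositions \ref{s3}, \ref{d6}, and \ref{hsol} each exhibit, up to conjugation in $\aut(S)$, the explicit generators of a subgroup $G$ isomorphic to $\st$, $\dih_6$, and $\dz_6$ respectively satisfying $\rho^G(S)=1$, and prove in each case that $S$ is \emph{not} $G$-solid (via a $G$-link to $\pl\times\pl$ landing in the non-$G$-solid situation of Example \ref{2fixed} or via Theorem \ref{gsolp1p1}). Since each of these three groups is unique up to conjugation in $\aut(S)$ by those same propositions, this establishes that $S$ is not $G$-solid whenever $G\cong\dz_6,\st,$ or $\dih_6$. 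For the ``if'' direction, I would argue the contrapositive: if $S$ is not $G$-solid, then by Lemma \ref{notind6} the group $G$ embeds in $\dih_6$, and by the corollary to Lemma \ref{necconddp6} its image under $w$ contains $\dz_6$ or the transitive $\st$; running through the subgroups of $\dih_6$ satisfying this constraint forces $G$ to be one of $\dz_6$, $\st$, or $\dih_6$.

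The main subtlety—really the only place any genuine verification is needed—is confirming that the list of three groups is both necessary and sufficient, i.e.\ that no other subgroup $G\subset\dih_6$ with $\rho^G(S)=1$ survives the transitivity condition and that the three named groups are the \emph{only} isomorphism types realized. This is a short finite check over the subgroup lattice of $\dih_6$, but it must be carried out against the precise action on the hexagon of $(-1)$-curves so that the $\rho^G(S)=1$ condition is correctly read off. Everything else is bookkeeping: the hard analytic and birational work has already been front-loaded into the three propositions and into the results of Section \ref{dp8} that they cite.

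Concretely, the write-up would read as follows. Since each of $\dz_6$, $\st$, and $\dih_6$ is realized as a subgroup of $\aut(S)$ with $\rho^G(S)=1$, and is unique up to conjugation by Propositions \ref{s3}, \ref{d6}, and \ref{hsol}, and since each such surface fails to be $G$-solid by those same propositions, we obtain one implication. For the converse, suppose $S$ is not $G$-solid with $\rho^G(S)=1$. By Lemma \ref{notind6}, $G$ is isomorphic to a subgroup of $\dih_6$, and by the corollary to Lemma \ref{necconddp6} together with Lemma \ref{necconddp6} itself, the only such subgroups with $\rho^G(S)=1$ are exactly the three listed. This completes the equivalence.
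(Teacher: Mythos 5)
Your proposal is correct and follows essentially the same route as the paper, whose ``proof'' of this theorem is simply the phrase ``Summing up the results of this section'': it assembles Lemma \ref{notind6} and the corollary to Lemma \ref{necconddp6} (which together reduce the possible isomorphism types to $\dz_6$, $\st$, and $\dih_6$) with Propositions \ref{s3}, \ref{d6}, and \ref{hsol} (which exhibit each of these up to conjugation and show non-$G$-solidity), exactly as you do. The finite check over the subgroup lattice of $\dih_6$ that you flag as the only genuine verification is precisely the content already carried out in the proof of Lemma \ref{necconddp6} and its corollary, so nothing further is needed.
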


\section{The projective plane}

The only remaining smooth del Pezzo surface is $S=\pp$, whose automorphism group is $\pgl_3(\dc)$. The $G$-rigidity of $S$ has been studied D.Sakovics in \cite{sako}. We will point out how his results hold for the $G$-solidity of $S$, and describe the full $G$-birational geometry of $S$ for $G\subset\pgl_3(\dc)$ isomorphic to $\aq$ or $\sq$. In other words, we are going to list all the $G$-Mori fibre spaces $S'$ such that there exists a $G$-birational map $S\dashrightarrow S'$.

\begin{theorem}[\cite{sako}]
    The projective plane is $G$-rigid if and only if $G$ is transitive and not isomorphic to $\sq$ or $\aq$.
\end{theorem}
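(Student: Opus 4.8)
The plan is to prove both implications, reading $G$-rigidity of $\pp$ as the assertion that every $G$-Mori fibre space $G$-birational to $\pp$ is $G$-isomorphic to $\pp$, and to analyse the $G$-Sarkisov links issuing from $\pp$ via the classification of \cite{iskovskikh1996factorization}. Since $\pp$ has degree $9$, every link of type \MakeUppercase{\romannumeral 1} or \MakeUppercase{\romannumeral 2} from $\pp$ is centred at a $G$-orbit of length at most $8$ in almost general position, and $\pp$ being a del Pezzo surface admits no links of type \MakeUppercase{\romannumeral 3} or \MakeUppercase{\romannumeral 4}; hence it suffices to control these orbits.

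For necessity I would argue by contraposition. If $G$ is intransitive, then by complete reducibility of representations of finite groups the induced three-dimensional representation splits, so $G$ fixes a point $P\in\pp$; blowing up $P$ gives a link of type \MakeUppercase{\romannumeral 1} to the Hirzebruch surface $\mathbb{F}_1$ with its $G$-conic bundle structure, so $\pp$ is not even $G$-solid. If instead $G\cong\aq$ or $G\cong\sq$, the theorem of the introduction concerning these two groups shows that $\pp$ is $G$-birational to a del Pezzo surface of degree $5$ carrying a $G$-conic bundle structure, a Mori fibre space not $G$-isomorphic to $\pp$; hence $\pp$ is again not $G$-rigid.

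The substantial direction is sufficiency. Assume $G$ is transitive and $G\not\cong\aq,\sq$. Transitivity means the representation is irreducible, so $G$ is non-abelian, fixes no point, and leaves no line invariant; since an invariant pair of points would span an invariant line, $G$ has no orbit of length $1$ or $2$. An orbit of length $3$ cannot be collinear, hence consists of three points in general position, and the associated link is the classical quadratic Cremona transformation, which returns to $\pp$. The decisive case is an orbit $\Sigma$ of length $4$ in general position: the four points then form a projective frame, whose pointwise stabiliser in $\pgl_3(\dc)$ is trivial, so $G$ embeds into $\mathrm{Sym}(\Sigma)\cong\sq$ as a transitive subgroup; among transitive subgroups of $\sq$, only $\aq$ and $\sq$ admit a faithful irreducible three-dimensional representation, and both are excluded by hypothesis. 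Thus for the groups under consideration no orbit of length $4$ in general position exists.

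The main obstacle is to exclude, for the remaining orbit lengths $5\le k\le 8$, any link reaching a surface not $G$-isomorphic to $\pp$, and this is where the full Blichfeldt classification of transitive finite subgroups of $\pgl_3(\dc)$ must be invoked. For the imprimitive (monomial) groups, an infinite family, I would use that each preserves the triangle of three coordinate points — already handled by the length-$3$ quadratic transformation — and check that their further short orbits either fail to be in general position or again yield self-links of $\pp$. For the finitely many primitive groups (the Hessian group of order $216$ and its subgroups, $\ac$, the Valentiner group $\mathfrak{A}_6$, and $\mathrm{PSL}_2(\mathbb{F}_7)$) I would treat the short orbits one by one: the key phenomenon is that such an orbit of length $k\le 8$ tends to lie on an invariant conic or line, so its blow-up is merely a weak del Pezzo surface carrying a $K$-trivial $(-2)$-curve and therefore supports no $K$-negative second extremal contraction to a new Mori fibre space; and whenever a genuine link does arise, one verifies it is a quadratic-type involution returning to a $G$-isomorphic copy of $\pp$. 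The rigidity of del Pezzo surfaces of degree at most $3$ (Segre–Manin) together with the degree-$4$ and degree-$5$ analyses would be used to guarantee that no del Pezzo surface arising as an intermediate endpoint can be linked onward to a conic bundle. The heart of the difficulty — and of Sakovics's argument — is exactly this case-by-case verification that, outside $\aq$ and $\sq$, the general-position hypotheses needed for a non-trivial link systematically fail.
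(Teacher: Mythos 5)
The paper does not prove this statement at all: it is quoted verbatim from Sakovics \cite{sako}, so there is no in-paper argument to compare against, and your proposal has to be judged as a reconstruction of that cited proof. The parts you carry out are sound and match the standard strategy: links from $\pp$ of type \MakeUppercase{\romannumeral 1} or \MakeUppercase{\romannumeral 2} are centred at orbits of length at most $8$; intransitivity gives a fixed point (Maschke applied to the preimage in $\specl_3(\dc)$) and hence a link to $\mathbb F_1$; for $G\cong\aq,\sq$ non-rigidity follows from the dP5 conic bundles exhibited in Propositions \ref{sqmf} and \ref{aqmf}; and the frame argument embedding $G$ into $\mathrm{Sym}(\Sigma)\cong\sq$ for a length-$4$ orbit in general position is the right idea. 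One caveat on that last step: ``admits a faithful irreducible three-dimensional representation'' is the wrong criterion, because transitive subgroups of $\pgl_3(\dc)$ need not lift to $\gl_3(\dc)$ --- the image $\dz_3^2$ of the Heisenberg group is transitive yet has no faithful irreducible linear representation at all. You must argue with projective representations: a transitive subgroup has a preimage in $\specl_3(\dc)$ acting irreducibly in degree $3$, and since the degrees of irreducibles with a fixed central character satisfy $\sum d_i^2=|G|$, no group of order $4$ or $8$ qualifies ($9>8$); this repairs your exclusion of $\dz_4$, $\dz_2^2$, $\dih_4$, but as written the step is incorrect in general.

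The genuine gap is the sufficiency direction for orbit lengths $5\le k\le 8$, which is exactly the substance of Sakovics's paper and which your text explicitly defers: phrases like ``tends to lie on an invariant conic,'' ``I would use,'' and ``one verifies'' replace the case-by-case analysis over Blichfeldt's classification rather than perform it. As it stands, nothing in your argument excludes, for some imprimitive monomial group or for the Hessian subgroups, $\ac$, $\mathfrak A_6$, or $\mathrm{PSL}_2(\mathbb F_7)$, a short orbit in general position supporting a link of type \MakeUppercase{\romannumeral 1} or \MakeUppercase{\romannumeral 2} to a Mori fibre space not $G$-isomorphic to $\pp$; the claim that any self-link of $\pp$ returns to a \emph{$G$-isomorphic} copy (i.e.\ that the conjugated group is conjugate in $\pgl_3(\dc)$, as required by the definition of $G$-rigidity and by the induction over a Sarkisov decomposition, cf.\ \cite{iskovskikh1996factorization}) is likewise asserted, not proved. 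So the proposal is a correct and well-organized plan whose decisive verifications are missing; to make it a proof you would either have to carry out the orbit analysis group by group, as Sakovics does, or simply cite \cite{sako} as the paper itself does.
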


Moreover, if $G$ fixes a point on $S$, then we can $G$-equivariently blow-up this point and get a $G$-birational map to the $G$-conic bundle $\mathbb F_1$, so that $S$ is not $G$-solid. Hence, the only remaining cases to study are those of $\sq$ and $\aq$.

\begin{lemma}\label{conjaqsq}
    The subgroups of $\pgl_3(\dc)$ isomorphic to $\sq$ or $\aq$ are unique up to conjugation in $\pgl_3(\dc)$.
\end{lemma}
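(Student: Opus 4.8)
The plan is to prove uniqueness up to conjugacy of the subgroups of $\pgl_3(\dc)$ isomorphic to $\aq$ and $\sq$ by reducing the question to representation theory via lifting through the central extension $\specl_3(\dc)\to\pgl_3(\dc)$, or more directly by invoking the classification of finite subgroups of $\pgl_3(\dc)$. The key observation is that a subgroup of $\pgl_3(\dc)$ isomorphic to a finite group $G$ corresponds to a faithful three-dimensional projective representation of $G$, and two such subgroups are conjugate if and only if the corresponding projective representations are equivalent. The hard part will be controlling the projective (as opposed to linear) representations, i.e. dealing with the Schur multiplier and the possible central extensions.

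First I would treat $\aq\cong\mathfrak A_4$. Since the Schur multiplier of $\mathfrak A_4$ is $\dz_2$, a faithful embedding $\aq\hookrightarrow\pgl_3(\dc)$ lifts either to a linear representation of $\aq$ on $\dc^3$ or to a genuinely projective one factoring through the binary tetrahedral group $\widetilde{\mathfrak A}_4=\specl_2(\mathbb F_3)$. A $2$-cocycle argument, or simply the observation that $\gcd(3,2)=1$ forces the lift to split, shows that the embedding comes from an honest linear $3$-dimensional representation of $\aq$. Then I would use the character table of $\mathfrak A_4$: its irreducible representations have dimensions $1,1,1,3$, so the unique faithful $3$-dimensional representation is the irreducible standard representation (the degree-$3$ irreducible), and any faithful $3$-dimensional representation must be this irreducible one since a sum of the one-dimensional characters cannot be faithful (they all factor through $\mathfrak A_4/\dz_2^2\cong\dz_3$). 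Because this irreducible is unique up to isomorphism, all embeddings into $\gl_3(\dc)$ are conjugate, hence all embeddings into $\pgl_3(\dc)$ are conjugate.

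Next I would handle $\sq\cong\mathfrak S_4$ in the same spirit. The irreducible representations of $\mathfrak S_4$ have dimensions $1,1,2,3,3$, and there are exactly two faithful irreducible $3$-dimensional representations, namely the standard representation $V$ and its twist $V\otimes\mathrm{sgn}$. I would argue that these two become conjugate in $\pgl_3(\dc)$: twisting by the sign character multiplies a matrix by $\pm1$ according to the parity of the permutation, which is the identity in $\pgl_3(\dc)$, so $V$ and $V\otimes\mathrm{sgn}$ yield the same projective representation and hence conjugate subgroups of $\pgl_3(\dc)$. The only remaining point is to rule out genuinely projective (non-linearizable) embeddings; again a Schur-multiplier computation, noting the relevant cohomology vanishes for the relevant cover because of the coprimality or by appealing to the known classification, shows every embedding is linearizable, so uniqueness up to conjugacy follows. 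I expect the representation-theoretic bookkeeping of the faithful $3$-dimensional irreducibles and the verification that the sign-twist is absorbed in $\pgl_3$ to be the only genuinely delicate steps; everything else is a direct application of the uniqueness of irreducible representations up to isomorphism.
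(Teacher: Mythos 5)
Your proposal is correct and takes essentially the same route as the paper: lift an embedding through $\specl_3(\dc)\rightarrow\pgl_3(\dc)$, use the coprimality of the order-$3$ kernel with the Schur multiplier $\dz_2$ to linearize, note that a faithful three-dimensional representation must be the unique irreducible one for $\aq$, respectively the standard one or its sign twist for $\sq$, and observe that the sign twist is absorbed in $\pgl_3(\dc)$. One small imprecision: since $H^2(\aq,\dz_3)\cong\dz_3\neq 0$, the claim that $\gcd(3,2)=1$ \emph{forces the lift to split} is not automatic --- the pulled-back $\dz_3$-extension could a priori be the non-split extension of order $36$, which the paper rules out explicitly by showing it has no suitable faithful degree-$3$ representation in $\specl_3(\dc)$; coprimality only kills the image of the extension class in $H^2(\aq,\dc^*)\cong\dz_2$, which gives linearizability, and that is exactly the conclusion your argument actually uses, so the proof goes through.
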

\begin{proof}
    The canonical projection $\pi:\gl_3(\mathbb C)\rightarrow\mathrm{PGL}_3(\mathbb C)$ induces a surjection $\mathrm{SL}_3(\mathbb C)\rightarrow\pgl_3(\dc)$. Let $G\subset\pgl_3(\dc)$ be a subgroup of $\pgl_3(\dc)$ isomorphic to $\mathfrak S_4$, and consider its lift $G'\subset\mathrm{SL}_3(\mathbb C)$ by the above projection. The kernel of $\pi$ restricted to $\specl_3(\dc)$ is $\{I_3,\mu I_3,\mu^2 I_3\}$, where $\mu$ is a primitive cube root of the unity. Thus, we have an extension
    \[\begin{tikzcd}
    	1 & {\dz_3} & {G'} & {G} & {1.}
    	\arrow[from=1-1, to=1-2]
    	\arrow[from=1-2, to=1-3]
    	\arrow[from=1-3, to=1-4]
    	\arrow[from=1-4, to=1-5]
    \end{tikzcd}\]
   But $\{I_3,\mu I_3,\mu^2 I_3\}$ lies in the center of $\gl_3(\mathbb C)$. We deduce that $G'$ is isomorphic to $\dz_3\times\sq$, since this group is the only triple central extension of $\sq$. Its subgroup $\{\id\}\times\sq$ is sent isomorphically to $G$ by $\pi$. In particular, there exists a subgroup of $\gl_3(\mathbb C)$ isomorphic to $\mathfrak S_4$, whose projection in $\pgl_3(\dc)$ is $G$. But the only irreducible faithful linear representations of degree $3$ of $\mathfrak S_4$, up to equivalence of representations, are the standard one and its product with the sign representation. Both are mapped by $\pi$ to the same subgroup of $\pgl_3(\dc)$.
   
The group $\aq$ has two triple central extensions, namely $\dz_3\times\aq$, and a non-split extension. But in the second case, there is no irreducible faithful representation of degree $3$ whose image is in $\specl_3(\dc)$. Hence, as in the case of $\sq$, there is a subgroup of $\gl_3(\dc)$ mapped isomorphically by $\pi$ onto $G$. Since there is only one equivalence class of irrecucible linear representations of $\aq$ of degree $3$, we conclude that $\aq$ is unique in $\pgl_3(\dc)$, up to conjugation.
\end{proof}

\begin{proposition}\label{sqmf}
    Let $G\cong\sq$ be a subgroup of $\aut(S)$. The only $G$-links starting from $S$ are:
    \begin{itemize}
        \item A link of type \MakeUppercase{\romannumeral 1} of the form
        $$
        \xymatrix
            {
                 &Z_5\ar[ld]_\sigma\ar[d]^\pi\\
                 S&\pl
            }\label{linkounou}
        $$
        where $\pi:Z_5\rightarrow\pl$ is a $G$-conic bundle on a del Pezzo surface of degree $5$.
        \item A link of type \MakeUppercase{\romannumeral 2} of the form
        $$
        \xymatrix
            {
                 &Z_6\ar[ld]_\sigma\ar[rd]^\tau&\\
                 S\ar@{-->}[rr]^\tau&&S
            }
        $$
        where $Z_6$ is a del Pezzo surface of degree $6$, and $\tau$ is the standard Cremona involution.
    \end{itemize}
    Moreover, the only $G$-link starting from $Z_5$ is the inverse of \eqref{linkounou}, leading back to $S$.
\end{proposition}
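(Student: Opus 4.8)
The plan is to run the $G$-Sarkisov program directly on $\pp$. By Lemma \ref{conjaqsq} we may fix one model of $G\cong\sq$: take $V=\{(x_1,x_2,x_3,x_4):\sum_i x_i=0\}\subset\dc^4$ with $\sq$ permuting the coordinates, and put $\pp=\mathbb P(V)$. Since $d=K_{\pp}^2=9$, the classification of \cite{iskovskikh1996factorization} says that every $G$-link from $\pp$ is centred at a $G$-orbit of length at most $8$; such a link genuinely exists only when blowing up the orbit produces an honest del Pezzo surface, i.e. when the orbit is in general position, and by \cite{egor} an orbit of length $7$ (Geiser) or $8$ (Bertini) can only lead to a surface $G$-isomorphic to $\pp$. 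The whole statement therefore reduces to listing the short $G$-orbits on $\pp$ and testing their position.

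First I would list the $G$-orbits of length $\le 8$ by recording which subgroups of $\sq$ fix a point of $\pp$, that is, possess an eigenline in $V$. As $V$ is irreducible there is no fixed point, and as $V|_{\aq}$ is still irreducible there is no orbit of length $2$; lengths $5$ and $7$ do not divide $24$. This leaves exactly: one orbit of length $3$, the points $[\,e_i+e_j-e_k-e_l\,]$, each fixed by a Sylow $2$-subgroup $\cong\dih_4$; one orbit of length $4$, the points $[\,\sum_m e_m-4e_i\,]$, each fixed by a point-stabiliser $\cong\st$; two orbits of length $6$, the six points $[\,e_i-e_j\,]$ (stabiliser a non-normal $\dz_2^2$) and the six non-real eigenlines of the $4$-cycles (stabiliser a $\dz_4$); and one orbit of length $8$, the non-real eigenlines of the $3$-cycles (stabiliser a $\dz_3$).

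The crux is the general-position test, and the $\sq$-invariant smooth conic $C_0=\{\sum_i x_i^2=0\}\subset\pp$ does all the work. A direct substitution shows that the $4$-cycle orbit of length $6$ and the $3$-cycle orbit of length $8$ both lie on $C_0$; blowing either up creates a curve of self-intersection $4-6<0$, respectively $4-8<0$, so the total space is not a del Pezzo surface and carries no $G$-link (in particular there is no Bertini link at length $8$). For the points $[\,e_i-e_j\,]$ the relation $(e_i-e_j)+(e_j-e_k)=e_i-e_k$ produces four lines each passing through three of them, so this orbit too fails to be in general position. By contrast $[\,e_i+e_j-e_k-e_l\,]$ and $[\,\sum_m e_m-4e_i\,]$ lie off $C_0$, and a one-line linear-independence check shows no three of either are collinear; both are in general position. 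Blowing up the length-$3$ orbit gives a del Pezzo surface of degree $6$ with $\rho^G=2$ whose two extremal contractions recover $\pp$, i.e. the standard Cremona involution $\tau$ (a link of type II); blowing up the length-$4$ orbit gives $Z_5$, a del Pezzo surface of degree $5$ with $\rho^G=2$, one of whose rays is the contraction $\sigma$ of the four exceptional curves to $\pp$ and the other a $G$-conic bundle $\pi\colon Z_5\to\pl$ (a link of type I).

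It then remains to prove the final assertion about $Z_5$. Since $\rho^G(Z_5)=2$ with one birational ray ($\sigma$) and one fibre ray ($\pi$), there is no second conic bundle structure (no link of type IV), and the unique birational ray gives the type III contraction $\sigma\colon Z_5\to\pp$. To rule out links of type II I would use that $\pi$ has $8-5=3$ singular fibres, so the induced $G$-action on the base $\pl$ factors as $\sq\twoheadrightarrow\st\hookrightarrow\pgl_2(\dc)$ with kernel $\dz_2^2$, the three singular fibres forming the length-$3$ orbit on the base. Over any other base orbit the stabiliser of a point—a $\dz_2^2$ generically and an $\aq$ over the length-$2$ orbit—acts on the fibre $\cong\pl$ without a fixed point, so $G$ has no orbit projecting one-to-one onto a base orbit and no elementary transformation is available. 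Hence $\sigma$ is the only link from $Z_5$. I expect the orbit bookkeeping of the second paragraph to be routine and the invariant conic of the third paragraph to be the real idea, since it simultaneously kills the two length-$6$ orbits and the length-$8$ orbit; the most delicate point will be this last paragraph, where one must match the conic-bundle part of the classification in \cite{iskovskikh1996factorization} against the base and fibre actions to be sure that only the type III contraction survives.
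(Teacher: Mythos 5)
Your proposal is correct and follows essentially the same route as the paper's proof: fix the standard model of $\sq$ in $\pgl_3(\dc)$ (your permutation action on $\{\sum x_i=0\}\subset\dc^4$ is just a change of coordinates from the paper's signed permutation matrices), enumerate the orbits of length at most $8$ via point stabilisers, disqualify the long orbits by the invariant conic and collinearity, obtain the type II link at the length-$3$ orbit and the type I link at the length-$4$ orbit, and rule out further links from $Z_5$ by showing the kernel $\dz_2^2$ of the induced action on the base acts without fixed points on the smooth fibres, exactly as the paper does via the explicit pencil $\left<x^2-y^2,\,x^2-z^2\right>$. If anything, your orbit bookkeeping is more careful than the paper's: you account for the second length-$6$ orbit $[e_i-e_j]$ (the points $(1:\pm1:0)$, $(1:0:\pm1)$, $(0:1:\pm1)$, stabilised by a non-normal Klein subgroup, an index-$4$ conjugacy class the paper's list of index-$6$ subgroups omits) and dispose of it by the collinearity relation $(e_i-e_j)+(e_j-e_k)=e_i-e_k$, and you correctly place the whole length-$8$ orbit on the invariant conic $\sum x_i^2=0$, whereas the conic $x^2=yz$ named in the paper contains only the $\dz_3$-fixed points, not their full $G$-orbit.
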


\begin{proof}
    Let $G$ be the subgroup of $\pgl_3(\dc)$ generated by $A=\begin{pmatrix}-1&0&0\\0&1&0\\0&0&1\end{pmatrix}$, $B=\begin{pmatrix}0&0&1\\1&0&0\\0&1&0\end{pmatrix}$, and $C=\begin{pmatrix}1&0&0\\0&0&1\\0&1&0\end{pmatrix}$. This group is isomorphic to $\sq$. Recall that two isomorphic subgroups of $\sq$ are always conjugated in $\sq$, so that it is enough to find an occurrence of each subgroup up to isomorphism in the study of the possible stabilizers.
    \begin{itemize}
        \item There is no fixed point under the above action.
        \item Notice that the subgroup of $G$ generated by $A$ and $B$ is isomorphic to $\mathfrak A_4$. It is the only subgroup of $G$ of index two and has no fixed point, so that $G$ does not have any orbit of length $2$.
        \item The only subgroup of index three up to conjugation is $\dih_4$, generated by $\begin{pmatrix}-1&0&0\\0&0&-1\\0&1&0\end{pmatrix}$, and $\begin{pmatrix}1&0&0\\0&0&1\\0&1&0\end{pmatrix}$. Its fixed points are $O_3:=\{(1:0:0),(0:1:0),(0:0:1)\}$, and $O_3$ is the only orbit of length $3$ under the action of $G$. It is the center of the link of type \MakeUppercase{\romannumeral 2}:
        $$
        \xymatrix
            {
                 &Z_6\ar[ld]_\sigma\ar[rd]^\tau&\\
                 S\ar@{-->}[rr]^\tau&&S
            }
        $$
        where $Z_6$ is a del Pezzo surface of degree $6$, and $\tau$ is the standard Cremona involution.
        \item The group $G$ does have any orbit of length $4$ in general position. Indeed, $G$ has a unique subgroup of index $4$. It is isomorphic to $\st$, and generated by $\begin{pmatrix}0&0&1\\1&0&0\\0&1&0\end{pmatrix}$ and $\begin{pmatrix}1&0&0\\0&0&1\\0&1&0\end{pmatrix}$. The only point fixed by $\st$ is $(1:1:1)$, and its $G$-orbit is $O_4:=\{(1:1:1),(-1:1:1),(1:-1:1),(1:1:-1)\}$. Hence, there is a $G$-link of type \MakeUppercase{\romannumeral 1} of the form:
        \begin{align}
        \xymatrix
            {
                 &\mathbb F_1\ar[ld]_\sigma\ar[d]^\pi\\
                 S&\pl
            }\label{link1}
        \end{align}
        where $\pi:X\rightarrow\pl$ is a $G$-conic bundle on a del Pezzo surface of degree $5$, with invariant Picard rank $2$.
        \item There is an orbit of length $6$, but not in general position. Indeed, there are two subgroups of index $6$ in $G$. The first one is $\dz_2^2$, generated by $A=\begin{pmatrix}-1&0&0\\0&1&0\\0&0&1\end{pmatrix}$, and $B=\begin{pmatrix}1&0&0\\0&-1&0\\0&0&1\end{pmatrix}$. Its only fixed point is $(0:0:1)$, and its orbit is $O_3$, which is of length $3$. The other subgroup of index $6$ of $G$ is $Z_4$, generated by $\begin{pmatrix}1&0&0\\0&0&1\\0&-1&0\end{pmatrix}$. Its fixed points are $(0:-i:1)$, $(0:i:1)$, and $(1:0:0)$. The orbit of the last one is $O_3$, and the two others have the orbit $O_6:=\{(0:i:1),(0:-i:1),(1:0:i),(1:0:-i),(i:1:0),(-i:1:0)$. These six points are not in general position, as they all lie on the Fermat conic $x^2+y^2+z^2=0$.
        \item There is no orbit of length $8$ in general position. The only subgroup of index $8$ is $\dz_3$, generated by $\begin{pmatrix}0&0&1\\1&0&0\\0&1&0\end{pmatrix}$. Its fixed points are $(1:1:1)$, $(1:\mu_3:\mu_3^2)$, and $(1:\mu_3^2:\mu_3)$, where $\mu_3$ is a primitive cube root of the unity. The orbit of $(1:1:1)$ is of length $4$, and $(1:\mu_3:\mu_3^2)$, and $(1:\mu_3^2:\mu_3)$ lie on the same orbit of length $8$: $O_8:=\{(1:\mu_3:\mu_3^2),(1:\mu_3:\mu_3^2),(1:\mu_3:\mu_3^2),(1:\mu_3:\mu_3^2),(1:\mu_3^2:\mu_3),(1:\mu_3^2:\mu_3),(1:\mu_3^2:\mu_3),(1:\mu_3^2:\mu_3)\}$. But these eight points are on the conic $x^2=yz$.
    \end{itemize}
    It remains to show that there is no $G$-link starting from $X$, except the inverse of the link \eqref{link1}. For this, we will show that all the orbits under the action of $G$ lifted on $\mathbb F_1$ have several points on the same fibre of the conic bundle. The birational map $\gamma$ is given by the linear system $\vert C\vert$ of conics passing through all the points of the orbit $O_4$. The curves $x^2-y^2=0$ and $x^2-z^2=0$ form a basis of this linear system. Hence, up to a change of basis, the map $\gamma$ is of the form $(x:y:z)\mapsto(x^2-y^2:x^2-z^2)$. The image of $G$ by $\gamma$ is isomorphic to $\st$. Hence the kernel $N$ of the induced morphism $G\rightarrow\aut(\pl)$ is isomorphic to $\dz_2^2$, and generated by the matrices $A=\begin{pmatrix}-1&0&0\\0&1&0\\0&0&1\end{pmatrix}$, and $B=\begin{pmatrix}1&0&0\\0&-1&0\\0&0&1\end{pmatrix}$. The action of $N$ on the smooth conics of this system is faithful, hence $N$ does not fix any point on the regular fibres of the conic bundle.
\end{proof}

We get the following immediate consequences.

\begin{corollary}
    Let $G$ be a subgroup of $\aut(S)$ isomorphic to $\sq$. The projective plane is not $G$-solid, but not $G$-birational to any Hirzebruch surface.
\end{corollary}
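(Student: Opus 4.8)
The plan is to deduce the corollary directly from Proposition \ref{sqmf}, which I may assume. Proposition \ref{sqmf} gives a complete enumeration of the $G$-links emanating from $S=\pp$ and from the degree-$5$ surface $Z_5$, so the entire $G$-birational graph of $S$ is already laid out: the two vertices $S$ and $Z_5$ joined by a single reversible link of type \MakeUppercase{\romannumeral 1}, together with the type-\MakeUppercase{\romannumeral 2} self-link on $S$ (the Cremona involution) which loops back to $S$. By the Sarkisov program (the theorem recalled in section \ref{formalism}), every $G$-Mori fibre space $G$-birational to $S$ is connected to $S$ through a finite chain of these links. I would therefore argue that traversing this graph from $S$ can only reach $S$ itself and $Z_5$.

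First I would establish that $S$ is not $G$-solid. The type-\MakeUppercase{\romannumeral 1} link in Proposition \ref{sqmf} produces a $G$-conic bundle structure $\pi\colon Z_5\rightarrow\pl$ on a del Pezzo surface of degree $5$. By definition, a del Pezzo surface is $G$-solid precisely when it is \emph{not} $G$-birational to any $G$-conic bundle; since the link exhibits exactly such a $G$-birational map, $S$ fails to be $G$-solid. This is the first half of the statement and requires only the existence of the link, not the full enumeration.

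Next I would prove that $S$ is not $G$-birational to any Hirzebruch surface $\mathbb F_n$. Suppose for contradiction that it were. Any $\mathbb F_n$ with $n\neq 1$ is not a $G$-Mori fibre space in our sense, but $\mathbb F_1$ and $\mathbb F_0\cong\pl\times\pl$ can be, and in any case a $G$-birational map to some $\mathbb F_n$ would, after running the $G$-MMP, yield a $G$-Mori fibre space with a $G$-conic bundle structure whose base is $\pl$ and whose fibres are $\pl$ with the corresponding action; such a target would have to appear as a vertex reachable from $S$ in the Sarkisov graph. But the only $G$-Mori fibre spaces reachable from $S$ are, by the enumeration in Proposition \ref{sqmf}, the surface $S$ itself and the conic bundle $Z_5\rightarrow\pl$ on the degree-$5$ del Pezzo surface. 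The surface $Z_5$ is a del Pezzo surface of degree $5$, hence not isomorphic to any Hirzebruch surface, and $\pp$ is not a Hirzebruch surface either. Crucially, the last sentence of Proposition \ref{sqmf} states that the \emph{only} $G$-link starting from $Z_5$ is the inverse link back to $S$, so the graph does not extend any further and no Hirzebruch surface is ever encountered.

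The main obstacle is making rigorous the passage from ``$G$-birational to a Hirzebruch surface'' to ``a Hirzebruch surface appears as a $G$-Mori fibre space vertex in the Sarkisov graph rooted at $S$.'' Here I would invoke that a Hirzebruch surface $\mathbb F_n$ carries its natural $\mathbb P^1$-bundle structure, and that for a faithful $G$-action realizing $\rho^G=2$ this is a $G$-conic bundle, hence a genuine $G$-Mori fibre space; the Sarkisov theorem then forces it onto the graph, and the finiteness plus exhaustiveness of Proposition \ref{sqmf} closes the argument. The delicate point is the case where $G$ does not act with $\rho^G(\mathbb F_n)=2$, but since $\mathbb F_n$ is rational and any such $G$-surface admits its own $G$-MMP terminating in a $G$-Mori fibre space $G$-birational to it, transitivity of $G$-birationality places that terminal space in the graph and the same enumeration applies. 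I expect this reduction to be the only step needing care; once it is in place the conclusion is immediate from the two listed consequences of Proposition \ref{sqmf}.
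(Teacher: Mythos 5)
Your overall route coincides with the paper's: the corollary is stated there as an immediate consequence of Proposition \ref{sqmf}, read exactly through the Sarkisov graph you describe. The first half is fine, since the type \MakeUppercase{\romannumeral 1} link exhibits a $G$-birational map from $\pp$ to the $G$-conic bundle $Z_5\rightarrow\pl$, and the factorization theorem of Section \ref{formalism} together with the exhaustive enumeration of links out of $S$ and out of $Z_5$ closes the graph at $\{S,Z_5\}$.

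The step you yourself flag as delicate, however, is handled incorrectly as written. If some $\mathbb F_n$ with a faithful $G$-action were \emph{not} a $G$-Mori fibre space, running the $G$-MMP on it produces a $G$-Mori fibre space $G$-birational to $S$, which by Proposition \ref{sqmf} is $S$ or $Z_5$ --- but this is perfectly consistent and yields no contradiction: only $G$-Mori fibre spaces occur as vertices of the Sarkisov graph, so an intermediate $\mathbb F_n$ that is not itself a vertex is invisible to the enumeration, and ``transitivity plus the same enumeration'' proves nothing about it. The repair is to show the troublesome case is vacuous, so that $\mathbb F_n$ is always forced to be a vertex. For $n\ge1$ the two extremal rays of $\mathbb F_n$ are intrinsically distinguished (one is the ruling, the other contracts the unique negative section, or for $n=1$ maps to $\pp$), so they cannot be exchanged by any automorphism; hence $\rho^G(\mathbb F_n)=2$ and the ruling is a $G$-conic bundle, i.e.\ a $G$-Mori fibre space. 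For $n=0$, either $\rho^G=2$ and one of the projections is a $G$-conic bundle, or $\rho^G=1$ and $\pl\times\pl$ is a $G$-del Pezzo surface of degree $8$; in both cases it is again a $G$-Mori fibre space. Thus any Hirzebruch surface $G$-birational to $\pp$ would have to appear in the graph, hence be $G$-isomorphic to $\pp$ or to $Z_5$, and neither is a Hirzebruch surface (the degree-$5$ del Pezzo surface $Z_5$ has Picard rank $5$, while $\pp$ has Picard rank $1$). With this substitution your argument is complete and agrees with the paper's intended proof.
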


\begin{corollary}
    Let $G$ be a subgroup of $\aut(S)$ isomorphic to $\sq$. Then $\mathrm{Bir}^G(S)=\left<G,\tau\right>\cong\sq\times\dz_2$, where $\tau$ is the standard Cremona involution.
\end{corollary}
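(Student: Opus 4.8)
The plan is to prove both inclusions, leaning entirely on the complete list of $G$-Sarkisov links obtained in Proposition \ref{sqmf}. First I would record the two generators explicitly. The standard Cremona involution is $\tau\colon(x:y:z)\dashrightarrow(yz:xz:xy)$, and a direct check on the generators $A,B,C$ of $G$ (all monomial matrices with entries in $\{0,\pm1\}$) shows that $\tau$ commutes with each of them, hence with all of $G$. Since $\tau^2=\id$ and $\tau\notin\pgl_3(\dc)=\aut(\pp)$, the subgroup $\left<G,\tau\right>$ is the internal direct product $G\times\left<\tau\right>\cong\sq\times\dz_2$, and it is visibly contained in $\mathrm{Bir}^G(\pp)$. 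This yields one inclusion for free and pins down the isomorphism type.

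For the reverse inclusion I would invoke the Sarkisov factorization recalled in Section \ref{formalism}: every element of $\mathrm{Bir}^G(\pp)$ is a composition of $G$-Sarkisov links, read off a path in the graph whose vertices are the $G$-Mori fibre spaces $G$-birational to $\pp$ and whose edges are the $G$-links. By Proposition \ref{sqmf} this graph is extremely constrained: its only vertices are $\pp$ and the conic bundle $Z_5\to\pl$; the only link issuing from $Z_5$ is the inverse $\phi^{-1}$ of the type \MakeUppercase{\romannumeral 1} link $\phi\colon\pp\dashrightarrow Z_5$; and the only links issuing from $\pp$ are $\phi$ and the type \MakeUppercase{\romannumeral 2} self-link $\tau$. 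In particular there is no type \MakeUppercase{\romannumeral 4} link at $Z_5$. Thus the graph is a single bridge $\pp\!-\!Z_5$ carrying one loop $\tau$ based at $\pp$.

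Consequently any self-map in $\mathrm{Bir}^G(\pp)$ is a word alternating the automorphism groups $\aut^G(\pp)$ and $\aut^G(Z_5)$, the involution $\tau$, and excursions $\pp\xrightarrow{\phi}Z_5\xrightarrow{\phi^{-1}}\pp$. I would then compute the two vertex groups. Since the standard representation of $\sq$ is irreducible, its centralizer in $\pgl_3(\dc)$ is trivial, and as $\sq$ has no outer automorphisms one gets $\aut^G(\pp)=G$; since $G$ sits in $\aut(Z_5)\cong\scinq$ as a point-stabilizer copy of $\mathfrak S_4$, whose normalizer in $\scinq$ is itself, again $\aut^G(Z_5)=G$. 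Because $\phi$ is a $G$-link it intertwines the $G$-action on $\pp$ with that on $Z_5$, so every excursion $\phi^{-1}\beta\phi$ with $\beta\in\aut^G(Z_5)=G$ returns an element of $G$. Hence the bridge edge contributes nothing new, every word reduces to a product of elements of $G$ and copies of $\tau$, and $\mathrm{Bir}^G(\pp)=\left<G,\tau\right>\cong\sq\times\dz_2$.

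The main obstacle is precisely this last reduction: making rigorous that the excursions through $Z_5$ produce no new birational maps. Concretely this is the assertion that the bridge $\phi$ supports no cycle, so that in the graph-of-groups description attached to the Sarkisov program the only loop is $\tau$; equivalently, it is the elementary relation of \cite{iskovskikh1996factorization} forcing $\phi^{-1}\,\aut^G(Z_5)\,\phi$ back into $\aut^G(\pp)$. Once the two vertex groups are both identified with $G$ through the $G$-equivariance of $\phi$, the bookkeeping collapses and the corollary follows. The remaining ingredients — the commutation check for $\tau$ and the two normalizer computations — are routine.
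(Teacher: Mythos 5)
Your proposal is correct and takes essentially the same approach as the paper: the paper's written proof consists only of the commutation check of $\tau$ against the monomial generators of $G$, treating the generation statement $\mathrm{Bir}^G(\pp)=\left<G,\tau\right>$ as an immediate consequence of the link classification in Proposition \ref{sqmf}, exactly as you do. Your additional bookkeeping --- the two-vertex graph of $G$-Mori fibre spaces and the normalizer computations $N_{\pgl_3(\dc)}(G)=G$ (Schur's lemma plus completeness of $\sq$) and $N_{\scinq}(\sq)=\sq$, which guarantee the excursions through $Z_5$ contribute nothing new --- is correct and merely makes explicit what the paper leaves implicit.
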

\begin{proof}
    The elements of $G$ are the automorphisms of the form $(x:y:z)\mapsto\sigma(\alpha x:\beta y:\gamma z)$, where $\sigma$ is a permutation of the coordinates, and $\alpha,\beta,\gamma\in\{-1,1\}$. The involution $\tau:(x:y:z)\dashrightarrow(yz:xz:xy)$ commutes with all these elements.
\end{proof}

The remaining case to study is that of $G\cong\aq$.

\begin{proposition}\label{aqmf}
    Let $G\cong\aq$ be a subgroup of $\aut(S)$. The only $G$-links starting from $S$ are:
    \begin{itemize}
        \item Three links of type \MakeUppercase{\romannumeral 1} of the form
        \begin{align}
        \xymatrix
            {
                 &Z_5\ar[ld]_\sigma\ar[d]^\pi\\
                 S&\pl
            }\label{linkou}
        \end{align}
        
        where $\pi:X\rightarrow\pl$ is a $G$-conic bundle on a del Pezzo surface of degree $5$.
        \item A link of type \MakeUppercase{\romannumeral 2} of the form
        $$
        \xymatrix
            {
                 &Z_6\ar[ld]_\sigma\ar[rd]^\tau&\\
                 S\ar@{-->}[rr]^\tau&&S
            }
        $$
        where $Z_6$ is a del Pezzo surface of degree $6$, and $\tau$ is the standard Cremona involution.
        \item A one parameter family of links of type \MakeUppercase{\romannumeral 2} of the form
        $$
        \xymatrix
            {
                 &Z\ar[ld]_\sigma\ar[rd]^\tau&\\
                 S\ar@{-->}[rr]^{i_a}&&S
            }
        $$
        where $\sigma$ is the blow-up of an orbit of six points, $\tau$ is the $G$-equivariant contraction of eight $(-1)$-curves, and $i_a$ is a birational involution.
    \end{itemize}
    The only $G$-link starting from $X$ is the inverse of \eqref{linkou}, leading back to $S$.
\end{proposition}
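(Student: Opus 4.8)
The plan is to mirror the proof of Proposition \ref{sqmf}: fix an explicit copy of $\aq$ in $\pgl_3(\dc)$ (legitimate by Lemma \ref{conjaqsq}), determine every $\aq$-orbit that can serve as the centre of a $G$-link, and match each to Iskovskikh's classification in \cite{iskovskikh1996factorization}. I would take $G=\left<A,B\right>$ with $A=\mathrm{diag}(-1,1,1)$ and $B$ the cyclic permutation matrix, so that the normal $\dz_2^2$ is generated by the diagonal sign matrices while $B$ realises the quotient $\dz_3$. Since a $G$-link of type \MakeUppercase{\romannumeral 1} or \MakeUppercase{\romannumeral 2} out of $\pp$ is centred at an orbit of length $<9$, and since the orbit lengths of a group of order $12$ lie in $\{1,2,3,4,6,12\}$, only lengths $3$, $4$ and $6$ can occur. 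In particular there is no orbit of length $5$, $7$ or $8$, so no Geiser or Bertini involution appears, which is what makes the list of links short.

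Next I would enumerate the fixed loci subgroup by subgroup. The standard $3$-dimensional representation is irreducible, so there is no fixed point (length $1$), and $\aq$ has no index-$2$ subgroup, so there is no orbit of length $2$. The subgroup $\dz_2^2$ has index $3$ and fixes exactly the three coordinate points; these form the single orbit $O_3=\{(1:0:0),(0:1:0),(0:0:1)\}$, in general position, which is the centre of the type \MakeUppercase{\romannumeral 2} link realised by the standard Cremona involution, with a del Pezzo surface of degree $6$ in the middle. Each of the four Sylow $3$-subgroups fixes three points of $\pp$, and the stabiliser of each such point is exactly a $\dz_3$; hence these twelve points split into three $\aq$-orbits of length $4$, namely the fixed points of $B$ together with their $\dz_2^2$-translates (this is where the $\aq$ case differs from $\sq$, in which odd permutations fuse two of them into an orbit of length $8$). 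I would check that each of the three orbits is in general position, so each is the centre of a type \MakeUppercase{\romannumeral 1} link to a del Pezzo surface of degree $5$ equipped with a $G$-conic bundle structure, giving the three links of \eqref{linkou}.

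The one-parameter family arises from the index-$6$ subgroups. An involution of $\aq$ fixes a coordinate line pointwise together with an isolated coordinate point; a general point of such a line has stabiliser exactly $\dz_2$, so its orbit has length $6$, distributed as two points on each coordinate line. The decisive input is that the only $\aq$-invariant conic is the Fermat conic $x^2+y^2+z^2=0$ (invariance under $\dz_2^2$ kills the cross terms, and invariance under $\dz_3$ equates the three squares); consequently a generic member of the family lies on no conic and has no three collinear points, hence is in general position. Blowing it up yields a one-parameter family of del Pezzo surfaces of degree $3$, and I would identify the second extremal contraction — the collapse of the $G$-orbit of the relevant $(-1)$-curves, e.g. the conics through five of the six points — with a birational involution $i_a$ of $\pp$, tracking the $G$-invariant Picard rank to confirm the link is of type \MakeUppercase{\romannumeral 2} and returns to $\pp$.

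The parts I expect to demand the most care are twofold. First, the orbit-geometry bookkeeping analogous to the bullet points of Proposition \ref{sqmf}: showing that all three length-$4$ orbits are genuinely in general position (producing smooth degree-$5$ del Pezzo conic bundles rather than degenerate configurations), and that the generic member of the length-$6$ family avoids the Fermat conic and every trisecant line. Second, proving that the inverse of \eqref{linkou} is the only link out of $X$: as in the final part of the $\sq$ argument I would exhibit the $G$-conic bundle $X\to\pl$, compute the kernel $N$ of the induced morphism $G\to\aut(\pl)$, and show that $N$ acts faithfully on a general fibre, so that every $G$-orbit on $X$ meets some fibre in more than one point. By the Sarkisov classification of \cite{iskovskikh1996factorization} this excludes any centre for a type \MakeUppercase{\romannumeral 2} or type \MakeUppercase{\romannumeral 4} link and leaves only the contraction of the exceptional orbit back to $\pp$.
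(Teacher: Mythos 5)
Your overall strategy coincides with the paper's proof: the same normal form for $G\cong\aq$ (legitimised, as in the paper, by Lemma \ref{conjaqsq}), the same exclusion of orbit lengths $1,2,5,7,8$, the identification of $O_3$ as the centre of the Cremona link through a sextic del Pezzo surface, the three length-$4$ orbits arising from the fixed points of the Sylow $3$-subgroups (with the correct remark that odd permutations fuse two of them into the orbit of length $8$ in the $\sq$ case), the one-parameter family $O_6^a$ of length-$6$ orbits lying two by two on the coordinate lines, and, for the final claim, exactly the paper's mechanism: compute the pencil of conics defining the map to $\pl$, observe that the kernel $N\cong\dz_2^2$ of $G\rightarrow\aut(\pl)$ acts faithfully on each smooth conic of the pencil, and conclude that no $G$-orbit on the degree-$5$ conic bundle can serve as the centre of a further link. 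The paper is more explicit only in writing out the quintic involution $i_a$ in coordinates; your description of the second contraction as the collapse of the six conics through five of the six base points is the standard realisation of such a link and is consistent with the Picard rank bookkeeping ($\rho(Z)=7$ must drop to $1$).

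There is, however, one genuine error in what you call the decisive input: it is false that the Fermat conic is the only $\aq$-invariant conic. Invariance of the \emph{curve} only requires the quadratic form to be a \emph{semi}-invariant, and the span of $x^2,y^2,z^2$, on which $\dz_2^2$ acts trivially, splits under the residual $\dz_3$ into three characters, giving three $\aq$-invariant conics $x^2+\mu_3^j y^2+\mu_3^{2j}z^2$ for $j=0,1,2$. Your statement is the correct one for $\sq$, where the odd permutations exchange the two twisted conics, but not for $\aq$. This matters quantitatively: a direct computation shows that $O_6^a$ lies on a conic precisely when $a^6=-1$ (the conic being $x^2+a^4y^2-a^2z^2$, which is indeed one of the three semi-invariant ones), whereas your lemma would only detect $a=\pm i$; moreover $O_6^a$ contains collinear triples, such as $(0:1:a)$, $(a:0:1)$, $(1:a:0)$, precisely when $a^6=1$. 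So the orbits in the one-parameter family are in general position exactly for $a^{12}\neq1$, and most of the degenerate parameters are invisible to your argument. Since you only assert the construction for a generic member, your conclusion survives, but the justification must be repaired along the above lines; the paper sidesteps the issue by exhibiting the orbits and the involution $i_a$ explicitly rather than arguing through invariant conics.
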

\begin{proof}
    Let $G$ be a subgroup of $\pgl_3(\dc)$ isomorphic to $\sq$. Noticing that the matrices $a:=\begin{pmatrix}0&0&1\\1&0&0\\0&1&0\end{pmatrix}$, and $b:=\begin{pmatrix}-1&0&0\\0&1&0\\0&0&1\end{pmatrix}$ generate a subgroup of $\pgl_3(\dc)$ isomorphic to $\aq$, and using Lemma \ref{conjaqsq}, we gte that $G$ is conjugated to $\left<a,b\right>$ in $\pgl_3(\dc)$. Also recall that any two isomorphic subgroups of $\aq$ are conjugated to each other.
    \begin{itemize}
        \item There is no fixed point under the above action.
        \item There is no subgroup of index $2$ in $G$.
        \item The only subgroup of index $3$ is $N=\dz_2^2$, generated by $A=\begin{pmatrix}-1&0&0\\0&1&0\\0&0&1\end{pmatrix}$, and $B=\begin{pmatrix}1&0&0\\0&-1&0\\0&0&1\end{pmatrix}$. Its only fixed point is $(0:0:1)$, whose orbit is $O_3=\{(1:0:0),(0:1:0),(0:0:1)\}$. The only link centered at this orbit is- the link of type \MakeUppercase{\romannumeral 2} of the form
        $$
        \xymatrix
            {
                 &Z\ar[ld]_\sigma\ar[rd]^\tau&\\
                 S\ar@{-->}[rr]^\tau&&S
            }
        $$
        where $\tau$ is the standard Cremona involution.
        \item The only subgroup of $G$ of index $4$ is $\dz_3$, generated by $\begin{pmatrix}0&0&1\\1&0&0\\0&1&0\end{pmatrix}$, and have three independant fixed points: $(1:1:1)$, $(1:\mu_3:\mu_3^2)$, and $(1:\mu_3^2:\mu_3)$. They give rise to three distinct orbits of length $4$ in general position: $O_4:=\{(1:1:1),(-1:1:1),(1:-1:1),(1:1:-1)\}$, $O'_4:=\{(1:\mu_3:\mu_3^2),(-1:\mu_3:\mu_3^2),(1:-\mu_3:\mu_3^2),(1:\mu_3:-\mu_3^2)\}$, and $O''_4:=\{(1:\mu_3^2:\mu_3),(-1:\mu_3^2:\mu_3),(1:-\mu_3^2:\mu_3),(-1:-\mu_3^2:\mu_3)\}$. In each case, the points are in general position, and blowing-up one of them, we get a $G$-link of type \MakeUppercase{\romannumeral 1} of the form
        \begin{align}
        \xymatrix
            {
                 &Z_5\ar[ld]_\sigma\ar[d]\\
                 S&\pl
            }\label{link2}
        \end{align}
        where $Z_5$ is a del Pezzo surface of degree $5$.
        \item The unique subgroup of index $6$ of $G$ is isomorphic to $\dz_2$, generated by $\begin{pmatrix}-1&0&0\\0&1&0\\0&0&1\end{pmatrix}$. Its fixed points are the points of $O_3$, and those of the form $(0:1:a)$, with $a\ne0$. They form orbits of length $6$ of the form $O_6^a:=\{(0:1:a),(a:0:1),(1:a:0),(0:-1:a),(a:0:-1),(-1:a:0)\}$. The only Sarkisov link centered at such orbit is the link of type \MakeUppercase{\romannumeral 2} of the form
        $$
        \xymatrix
            {
                 &Z\ar[ld]_\sigma\ar[rd]^\tau&\\
                 S\ar@{-->}[rr]^{i_a}&&S
            }
        $$
        where $\sigma$ is the blow-up of $O_6^a$, $\tau$ is the $G$-equivariant contraction of eight $(-1)$-curves, and $i_a$ is the birational involution $(x:y:z)\mapsto(f_1(x,y,z):f_2(x,y,z):f_3(x,y,z))$, where
        \begin{align}
            f_1(x,y,z)&=\left(a^{12}+1\right) x^2 y^2 z+a^{10} \left(-y^4\right) z+2 a^8 y^2 z^3-a^6 z^5+2 a^4 x^2 z^3-a^2 x^4 z,\nonumber\\
            f_2(x,y,z)&=\left(a^{12}+1\right) x^2 y z^2-a^{10} x^4 y+2 a^8 x^2 y^3-a^6 y^5+2 a^4 y^3 z^2-a^2 y z^4,\text{ and}\nonumber\\
            f_3(x,y,z)&=\left(a^{12}+1\right) x y^2 z^2+a^{10} (-x) z^4+2 a^8 x^3 z^2-a^6 x^5+2 a^4 x^3 y^2-a^2 x y^4.\nonumber
        \end{align}
    \end{itemize}
    We will now show that there is no $G$-link starting from any of the $G$-conic bundles $X_1$, $X_2$, and $X_3$ of degree $5$, except the inverse of the link \eqref{link2}. The $G$-conic bundle $X_1$ is the same as $X$ in the proof of Proposition \ref{sqmf}, and the proof is the same. The $G$-conic bundle $X_2$ is the blow-up of $S$ in the points of $O'_4:=\{(1:\mu_3:\mu_3^2),(-1:\mu_3:\mu_3^2),(1:-\mu_3:\mu_3^2),(1:\mu_3:-\mu_3^2)\}$. The linear system of conics passing through these points is generated by $\mu x^2-z^2=0$ and $(\mu+1)x^2+y^2=0$. The $G$-conic bundle $X_3$ is the blow-up of $S$ in the points of $O'_4:=\{(1:\mu_3^2:\mu_3),(-1:\mu_3^2:\mu_3),(1:-\mu_3^2:\mu_3),(1:\mu_3^2:-\mu_3)\}$. The linear system of conics passing through these points is generated by $\mu x^2-y^2=0$ and $(\mu+1)x^2+z^2=0$. In both of these cases, and the subgroup $N\cong\dz_2^2$ of $G$ acts faithfully on each smooth conic of the system, hence does not fix any point in the fibres of the conic bundle.
\end{proof}

Once again, we get the following consequences.

\begin{corollary}
    Let $G$ be a subgroup of $\aut(S)$ isomorphic to $\aq$. The projective plane is not $G$-solid, but not $G$-birational to any Hirzebruch surface.
\end{corollary}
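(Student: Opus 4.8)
The plan is to read off both assertions directly from the complete description of the $G$-Sarkisov links obtained in Proposition \ref{aqmf}. Recall that, by the factorization theorem for the Sarkisov program, every $G$-birational map from $S=\pp$ to a $G$-Mori fibre space decomposes into finitely many $G$-links. Hence the set of $G$-Mori fibre spaces $G$-birational to $\pp$ is exactly the set of vertices one can reach, starting from $\pp$, in the graph whose edges are the links classified in Proposition \ref{aqmf}. The first step is therefore to traverse this graph and check that it closes up.

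Starting from $\pp$, Proposition \ref{aqmf} lists exactly the following outgoing links: the three links of type \MakeUppercase{\romannumeral 1} to del Pezzo surfaces of degree $5$ carrying a $G$-conic bundle structure, the link of type \MakeUppercase{\romannumeral 2} given by the standard Cremona involution, and the one-parameter family of type \MakeUppercase{\romannumeral 2} involutions $i_a$; the latter two return to $\pp$. Moreover, the same proposition asserts that from each of the three degree-$5$ conic bundles the only $G$-link is the inverse of the corresponding link \eqref{linkou}, which leads back to $\pp$. Consequently the graph closes up, and the complete list of $G$-Mori fibre spaces $G$-birational to $\pp$ consists precisely of $\pp$ itself together with the three del Pezzo surfaces of degree $5$ equipped with a $G$-conic bundle structure.

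From this enumeration both claims follow at once. Since each such degree-$5$ surface is a $G$-conic bundle and is $G$-birational to $\pp$, the plane is by definition not $G$-solid. On the other hand, a Hirzebruch surface $\mathbb F_n$ satisfies $K^2=8$, whereas every $G$-Mori fibre space in our list is either $\pp$ or a del Pezzo surface of degree $5$; none of these is isomorphic to a Hirzebruch surface, so $\pp$ is not $G$-birational to any $\mathbb F_n$. I expect no serious obstacle here: the entire difficulty has already been absorbed into Proposition \ref{aqmf}, and the only point requiring a word of care is the verification that the graph of links genuinely closes, namely that from each degree-$5$ conic bundle there is no further link escaping the list. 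This is precisely the content of the last assertion of Proposition \ref{aqmf}, so the corollary is immediate.
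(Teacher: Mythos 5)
Your proposal is correct and is essentially the paper's own reasoning: the paper states this corollary without a separate proof, as an immediate consequence of Proposition \ref{aqmf} combined with the Sarkisov factorization, which is exactly your graph-traversal argument, with the closing of the graph supplied by the last assertion of that proposition. Your discriminating invariant $K^2$ (equal to $8$ for every $\mathbb F_n$, versus $9$ and $5$ for the surfaces in the list) is a clean way to make explicit the step the paper leaves implicit, namely that a Hirzebruch surface with a $G$-action is itself a $G$-Mori fibre space and so would have to appear among the reachable vertices.
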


\begin{corollary}
    Let $G$ be a subgroup of $\aut(S)$ isomorphic to $\aq$. Then $\mathrm{Bir}^G(S)=\left<G,\tau,i_a\vert a\in\dc^*\right>$, where $\tau$ is the standard Cremona involution.
\end{corollary}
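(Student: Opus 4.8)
The plan is to prove the two inclusions separately, the nontrivial one being $\mathrm{Bir}^G(S)\subseteq\langle G,\tau,i_a\mid a\in\dc^*\rangle$, which I would deduce from the $G$-equivariant Sarkisov program together with the complete list of links obtained in Proposition \ref{aqmf}. The inclusion $\langle G,\tau,i_a\mid a\in\dc^*\rangle\subseteq\mathrm{Bir}^G(S)$ is immediate: the group $G\cong\aq$ acts biregularly, and $\tau$ and each $i_a$ are, by Proposition \ref{aqmf}, the involutions attached to $G$-Sarkisov self-links of type \MakeUppercase{\romannumeral 2} centered respectively at the $G$-orbit $O_3$ and at the orbits $O_6^a$; being $G$-links they are $G$-equivariant, hence lie in $\mathrm{Bir}^G(S)$ by definition. (For $\tau$ one may alternatively repeat the computation of the $\sq$ case: written as signed coordinate permutations, $G$ commutes with the standard Cremona involution.)

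For the reverse inclusion, take any $\phi\in\mathrm{Bir}^G(S)$. By the factorization theorem recalled earlier, $\phi$ decomposes into a finite chain of $G$-Sarkisov links between $G$-Mori fibre spaces $G$-birational to $S$, with isomorphisms of fibre spaces inserted between consecutive links. By Proposition \ref{aqmf} the only such fibre spaces are $S=\pp$ and the three degree-$5$ conic bundles $X_1,X_2,X_3$, and the only links available are the self-links $\tau$ and $i_a$ at $S$, the three type-\MakeUppercase{\romannumeral 1} links $\chi_i\colon S\dashrightarrow X_i$, and, from each $X_i$, the single type-\MakeUppercase{\romannumeral 3} link $\chi_i^{-1}$ back to $S$. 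The decisive structural point is that each $X_i$ is a leaf of this Sarkisov graph: since $\chi_i^{-1}$ is the only link emanating from $X_i$, any occurrence of $\chi_i$ in the chain must be followed by a return to $S$, possibly after composing with an automorphism $\beta\in\aut^G(X_i)$. Consequently $\phi$ is a product of the self-links $\tau,i_a$, of elements of $\aut^G(S)$, and of the round-trip loops $\chi_i^{-1}\beta\,\chi_i$ with $\beta\in\aut^G(X_i)$.

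It then remains to show that $\aut^G(S)$ and every round-trip loop already lie in $\langle G,\tau,i_a\mid a\in\dc^*\rangle$, and this is the step I expect to be the main obstacle. For the biregular part one identifies the $G$-equivariant automorphisms of $\pp$: using Lemma \ref{conjaqsq} and Schur's lemma for the irreducible $3$-dimensional representation, this group is governed by $\aut(\aq)\cong\sq$, and one checks it is generated by the listed maps. For the loops through the leaves one uses that $\chi_i$ is given by the pencil of conics through the orbit $O_4^{(i)}$, so that $\aut^G(X_i)$ preserves both that orbit and that pencil; the conjugate $\chi_i^{-1}\,\aut^G(X_i)\,\chi_i$ is therefore a subgroup of $\mathrm{Bir}^G(\pp)$ preserving the conic pencil, and the absence of any further link from $X_i$ (Proposition \ref{aqmf}) is precisely what forces these contributions to reduce to automorphisms and to the involutions $\tau,i_a$ already accounted for, so that no new generator is produced. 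Assembling these verifications gives $\mathrm{Bir}^G(S)\subseteq\langle G,\tau,i_a\mid a\in\dc^*\rangle$, and hence the claimed equality.
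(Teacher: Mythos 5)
Your skeleton coincides with the paper's (unwritten) argument: the paper gives no separate proof of this corollary, presenting it—like its $\sq$ analogue after Proposition \ref{sqmf}—as an immediate consequence of Proposition \ref{aqmf} combined with the Sarkisov factorization theorem recalled in Section 2. Your first two paragraphs (the trivial inclusion; the factorization of any $\phi\in\mathrm{Bir}^G(S)$ into the listed links; the observation that each $X_i$ is a leaf of the Sarkisov graph, so every passage through $X_i$ is a round trip $\chi_i^{-1}\beta\,\chi_i$) make explicit exactly what the paper leaves implicit, and they are correct.

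The soft spot is your last paragraph, where the key claims are asserted rather than proved: "the absence of any further link from $X_i$ forces these contributions to reduce" is not, by itself, an argument, since a priori $\chi_i^{-1}\beta\,\chi_i$ could be a genuinely new birational self-map of $\pp$. The gap is concretely fillable. In a Sarkisov chain, $\beta$ is an isomorphism of $G$-Mori fibre spaces, hence preserves the conic fibration of $X_i$; among the ten $(-1)$-curves of the degree-$5$ surface $X_i$, the only ones meeting the fibre class $f=2h-e_1-e_2-e_3-e_4$ in one point are the four exceptional curves $e_1,\dots,e_4$ of $\chi_i$, the remaining six satisfying $(h-e_j-e_k)\cdot f=0$ and lying in singular fibres. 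Hence $\beta$ permutes $e_1,\dots,e_4$ and descends under the blow-down, so $\chi_i^{-1}\beta\,\chi_i$ is \emph{biregular}, an element of $\aut^G(\pp)$; the same computation disposes of a possible isomorphism $X_i\cong X_j$ inserted in the chain. This reduces everything to your other asserted step, the identification of $\aut^G(\pp)$: by Schur's lemma the centralizer of $G$ in $\pgl_3(\dc)$ is trivial, so $\aut^G(\pp)$ embeds into $\aut(\aq)\cong\sq$ and is contained in the copy of $\sq$ from Proposition \ref{sqmf} containing $G$. What remains—and what neither your sketch nor the paper addresses—is the finite check concerning the odd coset of $G$ in that $\sq$ (realized by a transposition matrix normalizing $G$): depending on whether one reads $\mathrm{Bir}^G(S)$ as maps commuting with the action or as the normalizer of $G$ in $\bir(S)$, one must verify either that such elements do not qualify as $G$-maps, or that they lie in $\left<G,\tau,i_a\right>$. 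Up to that one elementary verification, your proof is complete and is the paper's intended argument written out in full.
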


Summing up the results of this section, we can conclude about the $G$-solidity of the projective plane.

\begin{theorem}
    Let $G$ be a finite subgroup of $\pgl_3(\dc)$. The following assertions are equivalent.
    \begin{itemize}
        \item The projective plane is $G$-rigid,
        \item The projective plane is $G$-solid,
        \item The group $G$ is transitive and not isomorphic to $\sq$ or $\aq$.
    \end{itemize}
\end{theorem}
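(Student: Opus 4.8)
The plan is to prove the final theorem as a synthesis of the results already established in this section, rather than through any new geometric computation. The theorem asserts the equivalence of three statements for a finite subgroup $G\subset\pgl_3(\dc)$: that $\pp$ is $G$-rigid, that $\pp$ is $G$-solid, and that $G$ is transitive and not isomorphic to $\sq$ or $\aq$. Since $G$-rigidity always implies $G$-solidity by definition (a $G$-rigid surface admits no $G$-birational map to a non-$G$-isomorphic Mori fibre space, in particular no conic bundle), the first implication is immediate. So the real content is to close the loop by proving that $G$-solidity implies transitivity and exclusion of $\sq,\aq$, and that this last condition in turn implies $G$-rigidity.

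First I would invoke Sakovics' theorem, quoted at the start of the section, which states that $\pp$ is $G$-rigid if and only if $G$ is transitive and not isomorphic to $\sq$ or $\aq$. This directly gives the equivalence of the first and third bullet points. Thus the whole theorem reduces to showing that the second bullet (solidity) is equivalent to the other two. One direction, rigidity $\Rightarrow$ solidity, is the trivial implication noted above. For the converse I would argue contrapositively: if $G$ is \emph{not} transitive, or if $G\cong\sq$ or $G\cong\aq$, then $\pp$ is not $G$-solid. The non-transitive case is handled by the remark immediately following Sakovics' theorem: a non-transitive $G$ fixes a point on $\pp$, which can be $G$-equivariantly blown up to produce a $G$-birational map to the $G$-conic bundle $\mathbb F_1$, so $\pp$ is not $G$-solid. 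The cases $G\cong\sq$ and $G\cong\aq$ are exactly the content of Propositions \ref{sqmf} and \ref{aqmf} (and their corollaries), where explicit links of type \MakeUppercase{\romannumeral 1} to a degree $5$ del Pezzo surface carrying a $G$-conic bundle structure are exhibited; by Lemma \ref{conjaqsq} these subgroups are unique up to conjugation, so the conclusion holds for every such $G$.

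Assembling these pieces, I would write: by Sakovics' theorem the first and third conditions are equivalent; rigidity trivially implies solidity; and the discussion following Sakovics' theorem together with Propositions \ref{sqmf} and \ref{aqmf} shows that failure of the third condition forces non-solidity, whence solidity implies the third condition. This completes the cycle of implications.

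I do not expect any genuine obstacle here, since every required ingredient has already been proved earlier in the section; the only care needed is to record the three implications in a valid cyclic order and to cite the uniqueness statement of Lemma \ref{conjaqsq} so that the explicit computations for the chosen representatives of $\sq$ and $\aq$ apply to arbitrary conjugate subgroups. The mild subtlety, if any, is ensuring that ``$G$-solid'' is being negated correctly: producing a single $G$-birational map to a $G$-conic bundle suffices to violate solidity, and in each of the non-rigid cases precisely such a map has been constructed above.
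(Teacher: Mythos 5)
Your proposal is correct and matches the paper's own (implicit) argument: the paper proves this theorem precisely by ``summing up the results of this section,'' i.e.\ Sakovics' theorem for the equivalence of rigidity with the group-theoretic condition, the trivial implication from rigidity to solidity, the fixed-point blow-up to $\mathbb F_1$ in the non-transitive case, and Propositions \ref{sqmf} and \ref{aqmf} (via the conjugacy uniqueness of Lemma \ref{conjaqsq}) for $\sq$ and $\aq$. Your explicit attention to negating solidity correctly and to the cyclic order of implications is exactly the right bookkeeping, and nothing is missing.
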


\subsection*{Data availability statement}
The author confirms that the data supporting the findings of this study are available within the article and cited references.
\printbibliography
\end{document}